\newcommand{\stkout}[1]{\ifmmode\text{\sout{\ensuremath{#1}}}\else\sout{#1}\fi}
\theoremstyle{plain}
\newtheorem{theorem}{Theorem}[section]
\newtheorem{lemma}[theorem]{Lemma}
\renewenvironment{proof}{{\indent \indent \it Proof:l\quad}}{ \hfill$\blacksquare$\par}
\theoremstyle{remark}
\theoremstyle{definition}
\newtheorem{remark}[theorem]{Remark}
\newtheorem{definition}[theorem]{Definition}
\def\Om{{\Omega}}
\def\om{{\omega}}
\def\mb{\mathbb}
\def\om{\omega}
\def\Om{\Omega}
\def\ra{\rightarrow}
\DeclareMathOperator*{\Div}{\mathrm{div}}
\def\mb{\mathbb}
\def\om{\omega}
\def\Om{\Omega}
\def\ra{\rightarrow}
\def\df{{\rm d}}
\def\mcH{\mathcal{H}}
\def\pt{\partial}
\let\Section=\section
\def\section{\setcounter{equation}{0}\Section}
\title[Carleman estimates of degenerated     parabolic equations]{ Carleman estimates for degenerate parabolic equations
	with single interior point degeneracy and its applications}
\date{April 2024}
\author[W. Wu, Y.Hu, Y. Liu, D. Yang]{Weijia Wu$^{1}$, Yaozhong Hu$^{2}$, Yuanhang Liu$^{1,*}$,   Donghui Yang$^{1}$ }
\thanks{${}^*$Corresponding author: liuyuanhang97@163.com}
\address{$^1$ School of Mathematics and Statistics, Central South University, Changsha 410083, China} 
\address{$^2$  Department of Mathematical and Statistical Sciences, University of Alberta, Edmonton, AB T6G 2G1, Canada} 
\email{weijiawu@yeah.net}
\email{yaozhong@ualberta.ca}
\email{liuyuanhang97@163.com}
\email{donghyang@outlook.com}
\keywords{ }
\subjclass[2010]{93B05}	
\begin{document}
	\begin{abstract} 
		We study the controllability of a class of $N$-dimensional degenerate parabolic equations with single interior point degeneracy. We employ the Galerkin method to prove the existence of solutions for the equations. The analysis is then divided into two cases based on whether the degenerate point $x=0$ lies within the control region $\omega_0$ or not. For each case, we establish specific Carleman estimates. As a result, we achieve null controllability in the first case $0\in\omega_0$ and unique continuation and approximate controllability in the second case $0\notin\omega_0$.
	\end{abstract}
	\pagestyle{myheadings}
	\thispagestyle{plain}
	\markboth{DEGENERATE PARABOLIC EQUATIONS WITH GENERAL SYMMETRIC COEFFICIENTS}{}
	
	\maketitle		
	
	\section{Introduction}

	
	In recent years, scholars have been paying a lot of attention to degenerate equations. Indeed, numerous problems in physics (see \cite{calin2009heat,calin2010heat}), biology (see \cite{martinez2003regional}), and finance (see \cite{sakthivel2008exact}) can be described by degenerate parabolic equations.
	Unlike traditional parabolic equations, degenerate parabolic equations do not meet the uniform ellipticity condition. This distinction suggests additional challenges in establishing the existence and uniqueness of solutions. Moreover, the degeneracy of coefficients further complicates the proof of controllability.
	Unique continuation is a powerful tool for analyzing controllability.
	When it comes to parabolic equations, it's interesting to note that there are essentially two main methods for proving the unique continuation property. One method is described in \cite{garofalo1986monotonicity,garofalo1987unique} based on a combination of geometric and variational ideas, while the other involves using the Carleman estimate.
	In the world of math analysis and partial differential equations, the Carleman estimate is a really important tool. It has unique properties that make it super useful for solving all kinds of math problems.	
	It's widely recognized that Carleman estimates are key tools for proving unique continuation and controllability. 
	
	Carleman estimates for uniformly parabolic operators without degeneracy have been extensively developed (see \cite{alabau2012carleman,carleman1939probleme,dolecki1977general,emanuilov1995controllability,fattorini1971exact,fattorini1974uniform,fursikov1996controllability,lebeau1995controle,russell1973unified,vancostenoble2008null}). In more recent times, scholars have explored Carleman estimates for degenerate operators.  
	In the context of one-dimensional equations, boundary degeneracy has been explored in works such as \cite{araujo2022boundary,CA7,flores2020null}. Likewise, investigations into interior degeneracy have been conducted, as evidenced by studies like \cite{cannarsa2019null,fragnelli2016interior,fragnelli2017carleman}.
	The study in \cite{cannarsa2019null} focused on the case of interior degeneracy in one-dimensional equations, where the degeneracy occurs at the interior point $x = 0$, and locally distributed controls act only on one side of the origin.
	The authors utilized spectral analysis and the moment method to establish null controllability for the case when $\alpha \in (0,1)$. Unlike \cite{cannarsa2019null}, our study considers a similar case of interior single-point degeneracy in high-dimensional degenerate parabolic equations. The key distinction from \cite{cannarsa2019null} is that our study focuses on the interior single-point degeneracy of degenerate parabolic equations in higher dimensions. We consider Dirichlet boundary conditions and utilize the Carleman estimate method to establish null controllability for the case where $0 \in \omega_0$, and approximate controllability for the case where $0 \notin \omega_0$. Here, $\omega_0$ denotes the control region we have chosen.

	The research on high-dimensional degenerate parabolic equations is currently relatively limited (see \cite{BSV,cannarsa2016global,stuart2020critically,stuart2018stability}).
	In \cite{stuart2018stability}, the authors studied the stability of solutions to high-dimensional interior single-point degenerate equations.
	As for the study of controllability in the context of high-dimensional equations with boundary degeneracy, current investigations are primarily focused on cases where control is applied at the boundary but not in the interior (see \cite{BSV,cannarsa2016global}). 
	Notably, there is a lack of research addressing the controllability of high-dimensional equations with interior degeneracy when control is applied within the interior.
	
	Importantly, it should be emphasized that there exists a significant distinction between interior degeneracy and boundary degeneracy. When degeneracy
	occurs on the boundary, the influence of boundary conditions becomes pronounced. In \cite{wu2023null}, the equation studies the equation that degenerated on partial  boundary, while the control system studied in this paper degenerated at a single interior point. degenerated on partial  boundary means that the regularity of our boundary cannot be improved, and it also means that the divergence theorem of the boundary term may not hold in the computation of Carleman estimates. Therefore, in \cite{wu2023null}, we set the weight of a small region containing degenerate boundaries to 0, so that the integral term on this region is equal to 0. This is also the key idea of our method in \cite{wu2023null}: for some difficult to handle integral terms, we replace the integral over the degenerate region with the integral over the non-degenerate region for estimation.
	In this paper, our research differs significantly from that in \cite{wu2023null}. In \cite{wu2023null}, the control region is selected on a partial boundary and includes the entire degenerate boundary. However, in cases where the control region does not contain degenerate points $(0 \notin \om_0)$, we no longer use this method.
	Instead, for each solution of the dual equation, we introduce a sufficiently small $\epsilon$,  such that the integral terms over the degenerate region can be estimated by the integral terms over the non-degenerate region.
	This makes the coefficients $C$ in the final Carleman estimate dependent on the solution of dual equation. Therefore, in the second case, we cannot obtain the observability inequality and can only obtain the 
	unique continuation. Therefore, we only prove the approximate controllability in the second case, which is not be studied in \cite{wu2023null}.
	%
	
	In this paper, we study the Carleman estimates of a class of high-dimensional degenerate parabolic equations with interior single-point degeneracy. We have established Carleman estimates for both cases: $0 \in \omega_0$ and $0 \notin \omega_0$. For the case where $0 \in \omega_0$, we adopt the approach used in \cite{wu2023null}  to derive our Carleman estimate, thus establishing null controllability. On the other hand, for the case where $0 \notin \omega_0$, we employ a similar method as in the standard parabolic case, but with a different weight function. As a result, we obtain unique continuation and approximate controllability.
	
	The remaining sections of this paper are structured as follows. In Section 2, we present the main results of this paper. In Section 3, we present some preliminary results and demonstrate the well-posedness of problem \eqref{1.1}. In Section 4, we divide into two cases, denoted as $0 \in \omega_0$ and $0 \notin \omega_0$ respectively, and provide Carleman estimates for the degenerate equation \eqref{1.1}. 
	
	\section{main results}
	Consider the following single interior point degenerate problem
	\begin{equation}\label{1.1}
		\begin{cases}
			\partial_{t}z-\Div(|x|^\alpha A\nabla z)=\chi_{\omega_0}g, & (x,t)\in \Omega\times(0,T),\\
			z(x,t)=0, & (x,t)\in \partial\Omega\times(0,T),\\
			z(0)=z_{0},  &x\in\Omega.
		\end{cases}
	\end{equation}
	The solution of the equation \eqref{1.1} is denoted as $z(t;g,z_0)$. $\Omega$ is a bounded open subset of  $\mathbb{R}^N$ ($N\ge 2$) with $0\in\Omega$ and boundary $\Gamma:= \partial\Omega$  of class $C^2$, $0<\alpha<2$, $T>0$, $ Q:=\Omega\times (0,T)$, $\Sigma:= \Gamma\times(0,T)$.
	Consider $\epsilon_0\in (0, d(0,\Gamma))$ with $d(0,\Gamma)=\inf_{y\in \Gamma} |y|$, and for a small constant $\epsilon\in ( 0,\frac{1}{9}\epsilon_0) $, let
	\begin{equation*}
		\Om^\epsilon = \left\lbrace x\in \Om \mid |x|>\epsilon \right\rbrace, \ \Om_\epsilon = \left\lbrace x\in \Om \mid |x|<\epsilon \right\rbrace \mbox{ and } S(\epsilon) = \left\lbrace x\in \mathbb{R}^N \bigm| |x|=\epsilon \right\rbrace,
	\end{equation*}
	$\omega_0 \subset \Omega$ is a nonempty open set, $\om$ is a nonempty open set that satisfies $\overline{\om} \subset \om_0$ and $\Om_{2\epsilon}\cap\overline{\om}=\emptyset$. $\chi_{\omega_0}$ is the corresponding characteristic function, the symmetric matrix-valued function $A:\overline{\Omega} \to M_{n\times n}(\mathbb{R})$  satisfies the uniform ellipticity condition, i.e., $\xi^T A \xi \ge \beta |\xi|^2$,  for all $\xi \in \mathbb{R}^N$, where $\beta>0$ is a constant, and $A_{ij}\in C^1(\overline{\Omega})$ with $A=(A_{ij})$,  $i,j=1,\cdots, N$. $g\in L^2(Q)$ is the control, and $z_0 \in L^{2}(\Omega)$ is the initial data.
	
	We denote
	\begin{equation*}
		\mathcal{A}z=\Div(|x|^\alpha A\nabla z). 
	\end{equation*}
	
	The solution space is
	\begin{equation*}
		\mcH_0^1(\Om)=\left\lbrace z\in W_0^{1,1}(\Om)\bigm| |x|^{\alpha } A \nabla z \cdot \nabla z \in L^1(\Om)\right\rbrace,
	\end{equation*}
	where
	\begin{equation*}
		\left\langle z,v\right\rangle _{\mcH_0^1(\Om)}= \int_\Om |x|^\alpha A\nabla z\cdot \nabla v dx,
	\end{equation*}
	and
	\begin{equation*}\label{norm}
		\left\| z\right\|^{2} _{\mcH_0^1(\Om)}= \int_\Om |x|^\alpha A\nabla z\cdot \nabla z dx.
	\end{equation*}
	Using the classic Galerkin method, we will prove the following well posedness results in Section 3.
	\begin{theorem}\label{exist1}
		For any $g \in L^2(Q)$ and any $z_0 \in L^2(\Omega)$, there exists a unique solution $z \in C^0([0,T];L^2(\Omega)) \cap L^2(0,T;\mathcal{H}_0^1(\Omega))$ to equation \eqref{1.1}. Moreover, there exists a positive constant $C$ such that
		$$
		\sup_{t\in\left[0,T\right] } \|z(t)\|_{L^2(Q)}^2 + \int_{0}^{T} \left\| z(t)\right\|^2_{\mcH_0^1(\Omega)} d t \le C(\left\| z_0\right\|_{L^2(\Om)}^2 + \|g\|_{L^2(Q)}^2).
		$$
	\end{theorem}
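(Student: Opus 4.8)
The plan is to run the classical Galerkin scheme in a Hilbert basis adapted to the degenerate operator $\mathcal{A}$, derive a single energy identity that yields uniform a priori bounds, and then pass to the limit; uniqueness and the final estimate come out of the same energy identity. The starting point is the weak formulation: $z$ is a weak solution of \eqref{1.1} if, for every $v\in\mcH_0^1(\Om)$,
\begin{equation*}
	\frac{d}{dt}\int_\Om z\,v\,dx + \int_\Om |x|^\alpha A\nabla z\cdot\nabla v\,dx = \int_\Om \chi_{\omega_0} g\,v\,dx
\end{equation*}
in the distributional sense on $(0,T)$, together with $z(0)=z_0$. First I would fix a Hilbert basis $\{e_k\}_{k\ge1}$ of $L^2(\Om)$ consisting of eigenfunctions of $-\mathcal{A}$ (normalized in $L^2(\Om)$ and orthogonal in $\mcH_0^1(\Om)$); its existence rests on the compact embedding $\mcH_0^1(\Om)\hookrightarrow L^2(\Om)$, which I intend to invoke from the preliminary material of Section 3 (this is where the constraint $0<\alpha<2$ enters, via a weighted Hardy/Poincar\'e inequality). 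Setting $V_m=\mathrm{span}\{e_1,\dots,e_m\}$, I look for $z_m(t)=\sum_{k=1}^m d_k^m(t)e_k$ solving the projected system obtained by testing the weak formulation against each $e_j$, $j=1,\dots,m$, with $z_m(0)$ the $L^2$-projection of $z_0$ onto $V_m$. This is a linear first-order ODE system with $L^2$-in-time forcing, so Carath\'eodory's theorem gives a unique absolutely continuous solution on $[0,T]$.

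\textbf{A priori estimates.} Next I would test the projected system against $z_m$ itself (i.e.\ multiply the $j$-th equation by $d_j^m$ and sum), obtaining the energy identity
\begin{equation*}
	\frac{1}{2}\frac{d}{dt}\|z_m(t)\|_{L^2(\Om)}^2 + \|z_m(t)\|_{\mcH_0^1(\Om)}^2 = \int_\Om \chi_{\omega_0} g\,z_m\,dx .
\end{equation*}
Bounding the right-hand side by Cauchy--Schwarz and Young's inequality as $\tfrac12\|g\|_{L^2(\Om)}^2+\tfrac12\|z_m\|_{L^2(\Om)}^2$, integrating in time, and applying Gronwall's inequality yields a bound, uniform in $m$,
\begin{equation*}
	\sup_{t\in[0,T]}\|z_m(t)\|_{L^2(\Om)}^2 + \int_0^T\|z_m(t)\|_{\mcH_0^1(\Om)}^2\,dt \le C\big(\|z_0\|_{L^2(\Om)}^2+\|g\|_{L^2(Q)}^2\big).
\end{equation*}
Hence $\{z_m\}$ is bounded in $L^\infty(0,T;L^2(\Om))\cap L^2(0,T;\mcH_0^1(\Om))$. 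From the equation I would also bound $\{\partial_t z_m\}$ in $L^2(0,T;(\mcH_0^1(\Om))')$, using that $\mathcal{A}$ maps $\mcH_0^1(\Om)$ continuously into its dual and that $\chi_{\omega_0}g\in L^2(Q)$.

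\textbf{Passage to the limit, continuity, uniqueness.} By Banach--Alaoglu I extract a subsequence with $z_m\rightharpoonup z$ weakly in $L^2(0,T;\mcH_0^1(\Om))$, weakly-$*$ in $L^\infty(0,T;L^2(\Om))$, and $\partial_t z_m\rightharpoonup \partial_t z$ weakly in $L^2(0,T;(\mcH_0^1(\Om))')$. Because the equation is linear, passing to the limit in the projected system for a fixed test function $e_j$ is immediate, and density of $\bigcup_m V_m$ in $\mcH_0^1(\Om)$ extends the identity to all test functions, so $z$ is a weak solution. The regularity $z\in L^2(0,T;\mcH_0^1(\Om))$ with $\partial_t z\in L^2(0,T;(\mcH_0^1(\Om))')$ gives $z\in C^0([0,T];L^2(\Om))$ by the standard Lions--Magenes lemma, and an integration by parts in time against functions vanishing at $t=T$ identifies $z(0)=z_0$. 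For uniqueness I would take the difference $w$ of two solutions with identical data; it solves the homogeneous problem with $w(0)=0$, and the energy identity forces $\tfrac12\tfrac{d}{dt}\|w\|_{L^2(\Om)}^2+\|w\|_{\mcH_0^1(\Om)}^2=0$, whence $w\equiv0$. The claimed estimate for $z$ then follows from the uniform bound by weak lower semicontinuity of the norms.

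\textbf{Main obstacle.} The genuinely delicate point is not the Galerkin machinery, which is routine once the functional framework is in place, but the degenerate weight $|x|^\alpha$ vanishing at the interior point $x=0$. Everything above quietly presupposes that $\mcH_0^1(\Om)$ is a Hilbert space, that smooth functions are dense in it, that the compact embedding $\mcH_0^1(\Om)\hookrightarrow L^2(\Om)$ holds (needed for the spectral basis), and that the integration by parts defining the weak form is legitimate despite the degeneracy. These are exactly the statements I expect to be supplied as preliminaries in Section 3 and where the hypothesis $0<\alpha<2$ is essential; conditional on them, the remainder is the standard energy argument sketched here.
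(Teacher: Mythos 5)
Your proposal is correct and follows essentially the same route as the paper: a Galerkin scheme in the eigenbasis of $\mathcal{A}$ justified by the compact embedding $\mcH_0^1(\Om)\hookrightarrow L^2(\Om)$ from Section 3, the same energy identity with Young and Gronwall giving the uniform bound, a dual-space bound on $\partial_t z_m$, weak-limit passage with density of the spans, identification of $z(0)=z_0$ by integration by parts in time, continuity in $C^0([0,T];L^2(\Om))$ from the standard embedding lemma, and uniqueness by the energy argument. The only cosmetic differences are your citations (Carath\'eodory, Lions--Magenes) where the paper cites the corresponding results in Evans.
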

	
	
	
	
	As is classical in controllability problems, we can introduce the so-called adjoint problem for \eqref{1.1}:
	\begin{equation}\label{3.1}
		\begin{cases}
			\partial_{t}w + \Div(|x|^\alpha A\nabla w)=f, & \mbox{in} \ Q,  \\
			w=0 ,  & \mbox{on} \ \Sigma,  \\
			w(x,  T)= w_T,  & \mbox{in} \ \Omega,  
		\end{cases}
	\end{equation}
	where $f\in L^2(Q)$ and $w_T \in L^2(\Omega)$. The main results of this paper are the Carleman inequality and observability inequality stated below.
	\subsection{Null controllability for the case of \texorpdfstring{$0\in \om_0$}{}}
	
	\hspace*{\fill}\\
	
	For the case of $0\in\omega_0$,  we will establish the following Carleman estimate, with the detailed proof provided in Subsection 4.1.
	\begin{theorem}\label{Carleman1}
		There exist positive constants $C=C(\om_0,\Om)$, $s_0, \lambda_0$ such that for any $\lambda \ge \lambda_0$, $s \ge s_0$, and for any solution $w$ to \eqref{3.1}, the following inequality holds:
		\begin{equation}\label{3.2}
			\begin{split}
				&s^{-1}\iint_{Q} \xi^{-1} (|w_t |^2 + \left|\Div(|x|^\alpha A\nabla w)\right|^2) dx dt \\
				&+C s^3 \lambda^4 \iint_Q\xi^3|w|^2 dx dt  + C s  \lambda^2 \iint_Q\xi\left||x|^\alpha A  \nabla \eta \cdot \nabla w \right|^2 dx  d t\\
				&\qquad\le C\left\|e^{-s \sigma} f\right\|_{L^2(Q)}^2
				+Cs^3 \lambda^4 \int_0^T \int_{\omega_{0}} \xi^3|w|^2dx  d t.
			\end{split}
		\end{equation}
	\end{theorem}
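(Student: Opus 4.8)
The plan is to use the Fursikov--Imanuvilov conjugation scheme, adapted to the degenerate operator $\cA=\Div(|x|^\alpha A\nabla\,\cdot\,)$ and to the interior degeneracy at the origin in the spirit of \cite{wu2023null}. First I would fix the time weight $\theta(t)=1/(t(T-t))$ and a spatial weight $\eta\in C^2(\overline\Om)$ with $\eta>0$ in $\Om$, $\eta=0$ on $\Gamma$, and $|\nabla\eta|\ge c>0$ on $\overline{\Om\setminus\om}$, chosen moreover to be \emph{constant on the neighborhood $\Om_{2\epsilon}$ of $0$}, so that $\nabla\eta\equiv 0$ there; then set $\xi=\theta e^{\lambda\eta}$ and $\sigma=\theta(e^{2\lambda\|\eta\|_\infty}-e^{\lambda\eta})$, which gives $\nabla\sigma=-\lambda\xi\nabla\eta$. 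Introducing the conjugated unknown $v=e^{-s\sigma}w$, a direct computation shows that $P_s v:=e^{-s\sigma}(\partial_t+\cA)(e^{s\sigma}v)=e^{-s\sigma}f$ splits as $P_s v=P_1 v+P_2 v$, with the self-adjoint part $P_1 v=\cA v+s^2|x|^\alpha A\nabla\sigma\cdot\nabla\sigma\,v+s\sigma_t v$ and the skew-adjoint part $P_2 v=v_t+2s|x|^\alpha A\nabla\sigma\cdot\nabla v+s\,\Div(|x|^\alpha A\nabla\sigma)\,v$.

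I would then square and integrate over $Q$ to obtain $\|P_1 v\|_{L^2(Q)}^2+\|P_2 v\|_{L^2(Q)}^2+2\sprod{P_1 v}{P_2 v}_{L^2(Q)}=\|e^{-s\sigma}f\|_{L^2(Q)}^2$, so that everything reduces to bounding the nine cross terms in $2\sprod{P_1 v}{P_2 v}$ from below. Each is treated by integration by parts in $x$ and $t$; since $\theta$, and hence $\xi$ and $\sigma$, blow up as $t\to 0,T$, all temporal boundary contributions vanish. Using $\nabla\sigma=-\lambda\xi\nabla\eta$, the dominant positive contributions, coming from pairing $s^2|x|^\alpha A\nabla\sigma\cdot\nabla\sigma\,v$ with $2s|x|^\alpha A\nabla\sigma\cdot\nabla v$ and from the commutator of the principal part, produce precisely the two leading terms $s^3\lambda^4\iint_Q\xi^3|v|^2$ and $s\lambda^2\iint_Q\xi\,\left||x|^\alpha A\nabla\eta\cdot\nabla v\right|^2$, while every remaining term is of strictly lower order in $s$ or in $\lambda$.

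The main obstacle is the family of boundary terms generated by the spatial integrations by parts: those on $\Gamma$ and, far more delicate, those at the degeneracy $x=0$, where pairing against $|x|^\alpha$ produces surface integrals on the spheres $S(\epsilon)$ whose behaviour as $\epsilon\to 0$ is not controllable in general. This is exactly where the hypothesis $0\in\om_0$ and the device of \cite{wu2023null} enter: because $\eta$ was flattened on $\Om_{2\epsilon}\subset\om\subset\om_0$, we have $\nabla\eta=\nabla\sigma=0$ there, so every integrand containing a factor of $\nabla\sigma$ — in particular all the genuinely singular terms near the origin — is supported in $\Om\setminus\Om_{2\epsilon}$, where $|x|\ge 2\epsilon>0$ and the operator is uniformly elliptic; the only surviving integrals over $\Om_{2\epsilon}$ carry the harmless weight $\xi$ on $|v|^2$ and, being over a subset of $\om_0$, are absorbed into the control term. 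The terms on $\Gamma$ carry a favourable sign from $v=0$ on $\Sigma$ together with $\eta=0$, $\partial_\nu\eta\le 0$ on $\Gamma$, and are discarded. Rigorously, these integrations by parts would be justified by first working on $\Om^\epsilon$ and passing to the limit, using the regularity furnished by Theorem~\ref{exist1}.

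Finally I would fix $\lambda=\lambda_0$ large enough that the leading term controls the lower-order $x$-gradient and zeroth-order remainders on $\overline{\Om\setminus\om}$, then $s_0$ large enough to absorb the $s$-subcritical terms, the localization error on $\om$ being converted into $s^3\lambda^4\int_0^T\int_{\om_0}\xi^3|v|^2$. To restore the first line of \eqref{3.2}, I would use that $\|P_1 v\|_{L^2(Q)}^2$ and $\|P_2 v\|_{L^2(Q)}^2$ are now bounded: from $P_2 v$ and the already-controlled gradient term one recovers $e^{-s\sigma}w_t$, and from $P_1 v$ together with the gradient and zeroth-order terms one recovers $e^{-s\sigma}\cA w$, both weighted by $s^{-1}\xi^{-1}$. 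Undoing the conjugation $v=e^{-s\sigma}w$ throughout and collecting constants then yields \eqref{3.2}.
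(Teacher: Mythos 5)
Your proposal follows essentially the same route as the paper's proof: the same Fursikov--Imanuvilov conjugation with $u=e^{-s\sigma}w$ and the same $P_1/P_2$ splitting (your labels are merely swapped relative to the paper's), the same key device of flattening $\eta$ on a neighborhood of the origin contained in $\omega_0$ so that $\nabla\sigma$ vanishes there and all singular terms live where $|x|^\alpha A$ is uniformly elliptic, the same two leading positive terms $s^3\lambda^4\iint_Q \xi^3|u|^2\,dx\,dt$ and $s\lambda^2\iint_Q \xi\left||x|^\alpha A\nabla\eta\cdot\nabla u\right|^2 dx\,dt$, the sign condition $\frac{\partial\eta}{\partial n}<0$ on $\Gamma$ for the boundary terms, and the same endgame recovering $s^{-1}\xi^{-1}|u_t|^2$ and $s^{-1}\xi^{-1}|\Div(|x|^\alpha A\nabla u)|^2$ from the expressions of $P_1 u$ and $P_2 u$; the differences (exponent $-1$ versus $-4$ in $\theta$, normalization of $\xi$ and $\sigma$, taking $\eta$ constant rather than zero near $0$ with $\Om_{2\epsilon}\subset\om$) are cosmetic. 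The one glossed point is that the full-gradient remainder $s\lambda\iint_Q \xi\,|x|^\alpha A\nabla u\cdot\nabla u\,dx\,dt$ is \emph{not} of lower order relative to the directional term $s\lambda^2\iint_Q\xi\left||x|^\alpha A\nabla\eta\cdot\nabla u\right|^2dx\,dt$ (which vanishes where $\nabla\eta=0$) and cannot be absorbed by enlarging $s,\lambda$ alone; it requires the extra integration by parts against the already-recovered $s^{-1}\xi^{-1}\left|\Div(|x|^\alpha A\nabla u)\right|^2$ and $s^3\lambda^3\xi^3|u|^2$ terms, exactly the step the paper performs after \eqref{LLU4}, which is standard and consistent with your stated architecture.
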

	Then, employing a standard argument  (see \cite{FE,fursikov1996controllability}), we can deduce the resulting observability inequality as follows.
	\begin{theorem}
		For a fixed $T>0$ and an open set $\omega_0 \subset \Omega$ as defined previously, assuming that \eqref{3.2} holds, there exists a positive constant $C>0$ such that for any $w_T \in L^2(\Omega)$, the solution to \eqref{3.1} satisfies
		\begin{equation*}\label{3.4}
			\int_{\Omega}|w(x, 0)|^2 d x \leq C \iint_{\omega_0 \times(0, T)}|w|^2 d x d t.
		\end{equation*}
	\end{theorem}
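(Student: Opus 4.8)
The plan is to obtain the observability inequality from the Carleman estimate \eqref{3.2} by the classical dissipation argument (as in \cite{FE,fursikov1996controllability}), applied to the homogeneous adjoint problem; that is, I would take $f=0$ in \eqref{3.1}, since it is exactly the homogeneous adjoint whose observability drives null controllability. First I would freeze the parameters, fixing $s=s_0$ and $\lambda=\lambda_0$ once and for all, so that \eqref{3.2} becomes an inequality with purely numerical constants and the weights $\sigma,\xi$ become fixed positive functions on $Q$. The whole argument then consists of bridging the unweighted quantity $\int_\Omega|w(x,0)|^2\,dx$ to the weighted quantities controlled by \eqref{3.2}.

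The first genuine step is an energy (dissipation) identity for \eqref{3.1} with $f=0$. Multiplying $\partial_t w+\Div(|x|^\alpha A\nabla w)=0$ by $w$, integrating over $\Omega$ and integrating by parts — the boundary term vanishing because $w=0$ on $\Sigma$ — yields
\[
\frac{1}{2}\frac{d}{dt}\int_{\Omega}|w(x,t)|^2\,dx
=\int_{\Omega}|x|^\alpha A\nabla w\cdot\nabla w\,dx\ge\beta\int_{\Omega}|x|^\alpha|\nabla w|^2\,dx\ge 0,
\]
so that $t\mapsto\|w(t)\|_{L^2(\Omega)}^2$ is nondecreasing; in particular $\|w(0)\|_{L^2(\Omega)}^2\le\|w(t)\|_{L^2(\Omega)}^2$ for all $t\in[0,T]$. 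Integrating this over the middle interval $[T/4,3T/4]$ gives
\[
\frac{T}{2}\int_{\Omega}|w(x,0)|^2\,dx\le\int_{T/4}^{3T/4}\!\!\int_{\Omega}|w(x,t)|^2\,dx\,dt .
\]
Next I would compare the plain norm on this central slab with the weighted left-hand side of \eqref{3.2}: since $\xi$ is continuous and strictly positive, the weight (taken together with the Gaussian factor $e^{-2s\sigma}$ carried by the estimate) is bounded below by some $c_0>0$ on the compact set $\overline{\Omega}\times[T/4,3T/4]$, so the slab integral is dominated by $c_0^{-1}\iint_Q\xi^3|w|^2\,dx\,dt$, one of the nonnegative left-hand terms of \eqref{3.2}. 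Discarding the remaining nonnegative left-hand terms and using $f=0$, \eqref{3.2} bounds this by $Cs^3\lambda^4\int_0^T\!\int_{\omega_0}\xi^3|w|^2\,dx\,dt$. On the observation region the decisive point is that $\xi^3e^{-2s\sigma}$ is bounded \emph{above} on all of $Q$, the exponential decay as $t\to 0^+,T^-$ swallowing the polynomial blow-up of $\xi^3$; hence this last integral is dominated by $C\iint_{\omega_0\times(0,T)}|w|^2\,dx\,dt$. Chaining the three inequalities produces the claim.

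I expect the main obstacle to be the bookkeeping of the temporal weights rather than anything conceptual: one must verify simultaneously that the weight stays bounded above on $Q$ (to close the observation term into a clean $L^2(\omega_0\times(0,T))$ norm) and bounded below on the central time interval (to recover the unweighted energy), which is precisely why the argument localizes to $[T/4,3T/4]$, away from the singular times $t=0,T$. A secondary technical point is justifying the energy identity for the degenerate operator $\Div(|x|^\alpha A\nabla\cdot)$: the integration by parts and the density approximation by smooth data must be legitimated using the well-posedness bound of Theorem~\ref{exist1} and the structure of $\mathcal{H}_0^1(\Omega)$, which is where the restriction $0<\alpha<2$ enters to guarantee that the degenerate flux $|x|^\alpha A\nabla w$ is integrable up to $x=0$.
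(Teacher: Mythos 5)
Your proposal is correct and is precisely the standard argument the paper itself invokes without writing out (it cites \cite{FE,fursikov1996controllability} for this step): take $f=0$ in the adjoint problem, use the energy identity to get monotonicity $\|w(0)\|_{L^2(\Omega)}^2\le\|w(t)\|_{L^2(\Omega)}^2$, integrate over the central interval $[T/4,3T/4]$ where the Carleman weight is bounded below, and bound the observation term using the boundedness of the weight from above on all of $Q$. You also handle the one genuinely delicate reading correctly: the inequality \eqref{3.2} must be understood as carrying the factor $e^{-2s\sigma}$ after reverting from $u=e^{-s\sigma}w$ to $w$ (as the paper's own proof of Theorem \ref{Carleman1} does via \eqref{LLU5}), and this is essential, since $\xi^3$ alone blows up as $t\to 0^+,T^-$ and the term $s^3\lambda^4\int_0^T\int_{\omega_0}\xi^3|w|^2\,dx\,dt$ could not otherwise be dominated by an unweighted $L^2(\omega_0\times(0,T))$ norm.
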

	Using the duality between controllability and observability, proving controllability is equivalent to establishing an observability inequality for the adjoint system \eqref{3.1} (see \cite{lions1992remarks,rockafellar1967duality}). From this, we can deduce the null controllability of \eqref{1.1}.
	\begin{theorem}
		For a fixed $T>0$ and an open set $\omega_0 \subset \Omega$ as defined previously, assuming that \eqref{3.2} holds, there exists a control $g \in L^2(Q)$ such that the solution $z$ of \eqref{1.1} satisfies
		$$
		z(\cdot, T)=0, \text { in } \Omega.
		$$
	\end{theorem}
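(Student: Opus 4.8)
The plan is to obtain null controllability from the observability inequality of the preceding theorem by the classical duality (penalized Hilbert Uniqueness Method) argument, following \cite{lions1992remarks,fursikov1996controllability}. First I would record the duality identity linking \eqref{1.1} and the adjoint system \eqref{3.1}. Multiplying \eqref{1.1} by a solution $w$ of \eqref{3.1} with $f=0$ and terminal datum $w_T$, integrating over $Q$, and using that $\mathcal{A}=\Div(|x|^\alpha A\nabla\,\cdot\,)$ is self-adjoint under the homogeneous Dirichlet conditions shared by $z$ and $w$, the boundary and $\mathcal{A}$-terms cancel against $\partial_t w=-\mathcal{A}w$, leaving
\begin{equation*}
\int_\Omega z(\cdot,T)\,w_T\,dx-\int_\Omega z_0\,w(0)\,dx=\iint_{\omega_0\times(0,T)} g\,w\,dx\,dt .
\end{equation*}
Hence $z(\cdot,T)=0$ is equivalent to finding $g\in L^2(Q)$ with $\iint_{\omega_0\times(0,T)} g\,w\,dx\,dt=-\int_\Omega z_0\,w(0)\,dx$ for every $w_T\in L^2(\Omega)$. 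Note that Theorem \ref{exist1} (applied to the backward problem \eqref{3.1}) makes the map $w_T\mapsto w$, and in particular $w_T\mapsto w(0)$, well defined and continuous on $L^2(\Omega)$.

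For each penalization parameter $\varepsilon>0$ (distinct from the geometric constant $\epsilon$ fixed above) I would introduce the functional on $L^2(\Omega)$
\begin{equation*}
J_\varepsilon(w_T)=\frac12\iint_{\omega_0\times(0,T)}|w|^2\,dx\,dt+\varepsilon\|w_T\|_{L^2(\Omega)}+\int_\Omega z_0\,w(0)\,dx ,
\end{equation*}
where $w=w(w_T)$ solves \eqref{3.1} with $f=0$. This $J_\varepsilon$ is continuous and strictly convex; the crucial point is coercivity, which is exactly where the observability inequality enters. For a sequence $w_T^n$ with $\|w_T^n\|\to\infty$, writing $\tilde w^n$ for the solution with normalized datum $w_T^n/\|w_T^n\|$, either $\iint_{\omega_0\times(0,T)}|\tilde w^n|^2$ stays bounded below, forcing the quadratic term (scaled by $\|w_T^n\|$) to blow up, or it tends to $0$, in which case the observability inequality gives $\|\tilde w^n(0)\|_{L^2(\Omega)}\to0$ so the linear term vanishes and $J_\varepsilon(w_T^n)/\|w_T^n\|\to\varepsilon>0$. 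In both cases $J_\varepsilon$ is coercive, hence it attains its minimum at a unique $\hat w_T^\varepsilon$, with associated adjoint state $\hat w^\varepsilon$.

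Writing the Euler--Lagrange (subdifferential) condition at the minimizer and setting $g_\varepsilon=\chi_{\omega_0}\hat w^\varepsilon$, I would compare with the duality identity to identify $z_\varepsilon(\cdot,T)=-\varepsilon\,\hat w_T^\varepsilon/\|\hat w_T^\varepsilon\|$ (the case $\hat w_T^\varepsilon=0$ being trivial), so the state $z_\varepsilon$ driven by $g_\varepsilon$ satisfies $\|z_\varepsilon(\cdot,T)\|_{L^2(\Omega)}\le\varepsilon$. A uniform bound on the controls then follows from $J_\varepsilon(\hat w_T^\varepsilon)\le J_\varepsilon(0)=0$: dropping the nonnegative penalization term yields $\tfrac12\iint_{\omega_0\times(0,T)}|\hat w^\varepsilon|^2\le\|z_0\|_{L^2(\Omega)}\|\hat w^\varepsilon(0)\|_{L^2(\Omega)}$, and inserting observability once more gives $\|g_\varepsilon\|_{L^2(Q)}^2=\iint_{\omega_0\times(0,T)}|\hat w^\varepsilon|^2\le 4C\|z_0\|_{L^2(\Omega)}^2$, independent of $\varepsilon$.

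Finally I would extract a weakly convergent subsequence $g_\varepsilon\rightharpoonup g$ in $L^2(Q)$; by the continuous linear dependence of the solution on the data from Theorem \ref{exist1}, the corresponding states converge, and since $\|z_\varepsilon(\cdot,T)\|_{L^2(\Omega)}\le\varepsilon\to0$ the limit satisfies $z(\cdot,T)=0$, producing the desired control. The main obstacle is the coercivity step, where the observability inequality is indispensable; secondary technical points are the non-differentiability of the term $\varepsilon\|w_T\|$ at the origin, which I would handle through the subdifferential, and the limit passage, which relies on the uniform control bound together with the a priori estimates of Theorem \ref{exist1}.
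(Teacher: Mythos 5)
Your proposal is correct and follows exactly the route the paper itself takes: the paper deduces null controllability from the observability inequality via the standard duality argument (citing \cite{lions1992remarks,rockafellar1967duality,fursikov1996controllability}), which is precisely the penalized HUM scheme you carry out in detail, including the coercivity step where observability enters, the identification $z_\varepsilon(\cdot,T)=-\varepsilon\,\hat w_T^\varepsilon/\|\hat w_T^\varepsilon\|$, the $\varepsilon$-uniform control bound, and the weak limit passage. The only negligible quibble is that uniqueness of the minimizer of $J_\varepsilon$ (your claim of strict convexity) is neither obvious nor needed; convexity, continuity, and coercivity already give existence, which is all the argument uses.
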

	\subsection{Unique Continuation and approximate controllability for the case of \texorpdfstring{$0\notin \om_0$}{}}
	
	\hspace*{\fill}\\
	
	For the case of $0\notin \om_0$, we have the following Carleman estimate results, with the detailed proof provided in Subsection 4.2.
	\begin{theorem}\label{Carleman2}
		For any solution $w$ to \eqref{3.1}, there exist positive constants $C=C(\om_0,\Om,u)$, $s_0, \lambda_0$ such that for any $\lambda \ge \lambda_0$, $s \ge s_0$, the following inequality holds:
		\begin{equation}\label{08.16.1}
			\begin{split}
				&Cs^{-1} \iint_Q \xi^{-1}\left|w_t\right|^2dx  d t +Cs^{-1}   \iint_Q \xi^{-1}\left|\Div(|x|^\alpha A\nabla w)\right|^2dx  d t\\
				&+Cs^3\lambda^4\iint_Q \xi^3  w^2 dxdt
				+Cs \lambda^2 \iint_Q \xi  \left| |x|^\alpha A \nabla  w \cdot \nabla \eta\right| ^2  dx  dt\\
				&+C s\lambda^2 \iint_Q \xi   |x|^\alpha A \nabla w \cdot \nabla w dx  dt\\
				&\qquad\le \left\| e^{-s\sigma} f \right\| ^2_{L^2(Q)}
				+ Cs^3\lambda^4\int_{0}^{T}\int_{\om_0} \xi^3 w^2 dxdt.
			\end{split}
		\end{equation}
	\end{theorem}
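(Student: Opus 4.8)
The plan is to follow the classical Fursikov--Imanuvilov duality-splitting strategy for parabolic Carleman estimates, adapted to the interior point degeneracy, with the essential new ingredient being a \emph{solution-dependent} localization near the degenerate point $x=0$. First I would record the structural properties of the weights $\sigma$, $\xi$ and the spatial profile $\eta$ fixed for this case: $\xi$ and $\sigma$ are built from $\eta$ and the blow-up factor $\theta(t)=\tfrac{1}{t(T-t)}$ so that $\nabla\sigma=-\lambda\xi\nabla\eta$, $|\partial_t\xi|\le C\xi^2$, and $\sigma\to+\infty$ as $t\to 0^+,T^-$. The profile $\eta$ is taken smooth on $\overline\Omega$, vanishing on $\Gamma$, with $|\nabla\eta|\ge c>0$ on $\overline{\Omega\setminus\omega}$; since $0\notin\omega_0$ (so in particular $0\notin\overline\omega$), I would arrange $\eta$ to be smooth and non-critical across $x=0$, treating the degenerate point as an ordinary interior non-critical point of the weight.

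Next I would conjugate: set $v=e^{-s\sigma}w$ and compute the operator $Pv:=e^{-s\sigma}\bigl(\partial_t w+\Div(|x|^\alpha A\nabla w)\bigr)$, so that $Pv=e^{-s\sigma}f$. Splitting $P=P_1+P_2$ into its formally self-adjoint and skew-adjoint parts in $L^2(Q)$ gives
\begin{equation*}
\|e^{-s\sigma}f\|_{L^2(Q)}^2=\|P_1v\|_{L^2(Q)}^2+\|P_2v\|_{L^2(Q)}^2+2\,(P_1v,P_2v)_{L^2(Q)}.
\end{equation*}
The heart of the argument is the expansion of the cross term $2(P_1v,P_2v)$ through repeated integration by parts in $x$ and $t$. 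This produces the three dominant positive bulk terms $s^3\lambda^4\iint_Q\xi^3v^2$, $s\lambda^2\iint_Q\xi\,|x|^\alpha A\nabla v\cdot\nabla v$, and $s\lambda^2\iint_Q\xi\,\bigl||x|^\alpha A\nabla\eta\cdot\nabla v\bigr|^2$, together with lower-order bulk terms and boundary contributions. Because $w=0$ on $\Sigma$ and $\sigma$ blows up at $t=0,T$, the lateral and temporal boundary terms vanish or are controlled in the usual way.

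The hard part — and where this proof departs from the uniformly parabolic case and from \cite{wu2023null} — is the contribution near $x=0$. The coefficient $|x|^\alpha$ degenerates there while $\nabla(|x|^\alpha)=\alpha|x|^{\alpha-2}x$ is singular, so the integrations by parts generate terms supported in the degenerate region $\Om_\epsilon$ and surface integrals on the spheres $S(\epsilon)$ that are not uniformly controlled. To handle these I would, for the given solution $w$, choose $\epsilon\in(0,\tfrac19\epsilon_0)$ small enough (depending on $w$) that every such degenerate-region integral over $\Om_\epsilon\times(0,T)$ is dominated by a small fraction of the corresponding non-degenerate bulk integral over $\Om^\epsilon\times(0,T)$. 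This is possible precisely because $w\in L^2(0,T;\mcH_0^1(\Om))$, so $\iint_{\Om_\epsilon\times(0,T)}|x|^\alpha A\nabla w\cdot\nabla w\to 0$ as $\epsilon\to 0$. This absorption is exactly what forces $C$ to depend on the solution $w$, and hence why only a solution-dependent estimate (yielding unique continuation rather than a uniform observability inequality) can be expected.

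Finally I would absorb all remaining lower-order terms by taking $\lambda\ge\lambda_0$ and $s\ge s_0$ large. To recover the terms $s^{-1}\iint_Q\xi^{-1}(|w_t|^2+|\Div(|x|^\alpha A\nabla w)|^2)$ I would express $w_t$ and $\Div(|x|^\alpha A\nabla w)$ in terms of $P_1v$, $P_2v$ and the weights, then bound them using the already-established lower bound for $\|P_1v\|^2+\|P_2v\|^2$. Lastly, the localized term coming from the region where $\nabla\eta$ may vanish is supported in $\omega$, and since $\overline\omega\subset\omega_0$ it is dominated by $s^3\lambda^4\int_0^T\!\int_{\omega_0}\xi^3w^2\,dx\,dt$, which after returning from $v$ to $w$ yields \eqref{08.16.1}.
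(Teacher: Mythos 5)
Your overall skeleton (conjugation $u=e^{-s\sigma}w$, splitting the conjugated operator, expanding the cross term, and the solution-dependent choice of $\epsilon$ that forces $C=C(u)$ --- this last idea exactly matches the paper's Lemma \ref{EPSILON}) is right, but there is a genuine gap at the single most important design decision: the behavior of $\eta$ at the degenerate point. You propose to take $\eta$ ``smooth and non-critical across $x=0$'', i.e. $|\nabla\eta|\ge c>0$ there, and to handle the resulting singular terms by cutting at $S(\epsilon)$ and absorbing degenerate-region integrals into non-degenerate ones. This fails. With $\nabla\eta(0)\neq 0$, the cross-term expansion contains coefficients such as $\partial_{x_i}(|x|^\alpha A)\sim |x|^{\alpha-1}$ and $\nabla(\Div(|x|^\alpha A\nabla\eta))\sim |x|^{\alpha-2}$, and after Cauchy--Schwarz the quantities to be absorbed --- e.g. $\xi\,|x|^{2\alpha-4}u^2$ or $\xi\,|x|^{2\alpha-2}|\nabla u|^2$ --- are strictly more singular than anything the positive terms of the estimate control: the Hardy-type inequality of Lemma \ref{L2.2} gives $|x|^{\alpha-2}u^2\in L^1(\Om)$ but not $|x|^{2\alpha-4}u^2\in L^1(\Om)$, and $|x|^{2\alpha-2}|\nabla u|^2$ is not dominated by $|x|^\alpha A\nabla u\cdot\nabla u$ when $\alpha<2$. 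No choice of small $\epsilon=\epsilon(w)$ rescues this, because the obstruction is not the size of the region but the incomparability of the weights. Moreover, the surface integrals on $S(\epsilon)\times(0,T)$ that your cut produces involve traces of $u$ and $\nabla u$, which are not bounded by a small fraction of the bulk integrals for an a priori fixed $\epsilon$; at best one could get such control along a carefully chosen sequence $\epsilon_j\to 0$, which you neither state nor prove.

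The paper resolves the difficulty in the opposite way (Lemma \ref{ETA2}): it builds $\eta=h\cdot\zeta$ with $h(x)=2\epsilon^{-2}|x|^2$ near the origin, so that $\eta$ vanishes quadratically at $x=0$ and all derived coefficients --- $|x|^\alpha A\nabla\eta$, $\Div(|x|^\alpha A\nabla\eta)$, $\nabla(|x|^\alpha A\nabla\eta\cdot\nabla\eta)$, etc. --- are continuous on $\overline{\Om}$. All integrations by parts are then carried out over the whole of $\Om$, with boundary terms only on $\Sigma$, so no $S(\epsilon)$ surface terms and no singular bulk terms ever appear. The price is that the lower bound $\left||x|^\alpha A\nabla\eta\cdot\nabla\eta\right|^2\ge C>0$ now fails on $\Om_\epsilon$ as well as on $\om$, and it is only at this point --- absorbing $\int_0^T\!\int_{\Om_\epsilon}\xi^3u^2\,dx\,dt$ and the corresponding gradient integral into the integrals over $\Om\setminus(\om\cup\Om_\epsilon)$ --- that the solution-dependent Lemma \ref{EPSILON} enters, which is exactly where $C=C(\om_0,\Om,u)$ comes from. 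You correctly anticipated that mechanism but attached it to the wrong difficulty. Two smaller points: the paper's splitting $P_1u+P_2u=f_{s,\lambda}$ is not the exact self-adjoint/skew-adjoint decomposition (lower-order pieces are moved into $f_{s,\lambda}$), and the local gradient term $s\lambda\int_0^T\!\int_\om\xi\,|x|^\alpha A\nabla u\cdot\nabla u\,dx\,dt$ on the right-hand side cannot simply be ``dominated by'' the $\om_0$ observation term as you assert; the paper removes it by a Caccioppoli-type argument with a cutoff $\gamma\in C_0^2(\om_0)$, $\gamma\equiv 1$ on $\om$, integrating by parts once more and absorbing the resulting $\Div(|x|^\alpha A\nabla u)$ and gradient contributions into the left-hand side.
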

	Then, using a standard argument (see \cite{FE,fursikov1996controllability}), we can easily conclude the following unique continuation results.
	\begin{theorem}
		For a fixed $T>0$ and an open set $\omega_0 \subset \Omega$ as defined previously, assuming that \eqref{08.16.1} holds, then for any $w_T \in L^2(\Omega)$,  if $w=0$ in $\om_0\times(0,T)$, then $w=0$ in $Q$.
	\end{theorem}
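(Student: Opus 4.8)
The plan is to apply the Carleman estimate (\ref{08.16.1}) to the homogeneous adjoint equation and then exploit the sign structure of its two sides. Fix a solution $w$ of (\ref{3.1}) with source $f=0$ satisfying the hypothesis $w=0$ on $\omega_0\times(0,T)$, and let $C$, $s_0$, $\lambda_0$ be the constants furnished by Theorem \ref{Carleman2} for this particular $w$. Although $C$ may depend on $w$ (as flagged in the introduction, which is precisely why observability is unavailable here), it is finite and strictly positive, and this is all the argument requires.

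First I would observe that with $f=0$ the term $\|e^{-s\sigma}f\|^2_{L^2(Q)}$ on the right-hand side of (\ref{08.16.1}) vanishes identically. Next, since by hypothesis $w\equiv 0$ on $\omega_0\times(0,T)$, the observation integral $\int_0^T\!\int_{\omega_0}\xi^3 w^2\,dx\,dt$ also vanishes. Hence the entire right-hand side of (\ref{08.16.1}) equals zero.

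Then I would use that every term on the left-hand side of (\ref{08.16.1}) is nonnegative: the weight $\xi$ is strictly positive on $Q$, so the terms $s^{-1}\xi^{-1}|w_t|^2$, $s^{-1}\xi^{-1}|\Div(|x|^\alpha A\nabla w)|^2$, $s^3\lambda^4\xi^3 w^2$, and $s\lambda^2\xi\,\big||x|^\alpha A\nabla w\cdot\nabla\eta\big|^2$ all integrate to nonnegative quantities, while the uniform ellipticity of $A$ together with $|x|^\alpha\ge 0$ gives $|x|^\alpha A\nabla w\cdot\nabla w\ge\beta|x|^\alpha|\nabla w|^2\ge 0$, so the last term is nonnegative as well. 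A finite sum of nonnegative quantities bounded above by zero must vanish term by term.

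In particular, $Cs^3\lambda^4\iint_Q\xi^3 w^2\,dx\,dt=0$; since $C,s,\lambda>0$ and $\xi>0$ throughout $Q$, this forces $w=0$ almost everywhere in $Q$, i.e. $w\equiv 0$ in $Q$, which is the claimed unique continuation. I do not expect a genuine obstacle in this deduction, as the whole difficulty of the result is already absorbed into the Carleman estimate of Theorem \ref{Carleman2}. The only point requiring a little care is that (\ref{08.16.1}) is an a priori estimate, so one should first confirm that the given $w$ is admissible in it—this follows from the well-posedness and regularity of the adjoint problem, or, if needed, from a standard density argument approximating $w$ by smoother solutions and passing to the limit.
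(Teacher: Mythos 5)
Your proposal is correct and follows essentially the same route as the paper, which gives no explicit proof but invokes exactly this standard argument (citing \cite{FE,fursikov1996controllability}): with $f=0$ and $w=0$ on $\omega_0\times(0,T)$ the right-hand side of \eqref{08.16.1} vanishes, every left-hand term is nonnegative by the positivity of $\xi$ and the ellipticity of $A$, so in particular $\iint_Q \xi^3 w^2\,dx\,dt=0$ forces $w\equiv 0$ in $Q$. You also correctly identify the one genuinely delicate point, namely that the $w$-dependence of the constant $C=C(\om_0,\Om,u)$ is harmless for unique continuation (though it obstructs observability), which matches the paper's own remark.
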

	Now that we have obtained the unique continuation of equation \eqref{3.1}, we can easily obtain the approximate controllability of equation \eqref{1.1} as follows.
	\begin{theorem}
		For a fixed $T>0$ and an open set $\omega_0 \subset \Omega$ as defined previously, assuming that \eqref{08.16.1} holds, then the system \eqref{1.1} is approximate controllability, i.e. 
		$$
		\overline{\left\lbrace z(T;g,z_0) \mid g\in L^2(Q)\right\rbrace }=L^2(\Om).
		$$
	\end{theorem}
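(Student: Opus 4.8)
The plan is to deduce approximate controllability from the unique continuation property of the adjoint system \eqref{3.1} by the classical Hahn--Banach duality argument. First I would reduce to the case $z_0=0$: by the linearity of \eqref{1.1} the reachable set $R(z_0):=\left\lbrace z(T;g,z_0)\mid g\in L^2(Q)\right\rbrace$ equals the affine translate $z(T;0,z_0)+R(0)$, where $R(0):=\left\lbrace z(T;g,0)\mid g\in L^2(Q)\right\rbrace$ is a linear subspace of $L^2(\Om)$. Since density is preserved under translation, $\overline{R(z_0)}=L^2(\Om)$ holds if and only if $\overline{R(0)}=L^2(\Om)$, so it suffices to prove that $R(0)$ is dense.

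By the Hahn--Banach theorem, $\overline{R(0)}=L^2(\Om)$ is equivalent to the implication: if $\psi\in L^2(\Om)$ satisfies $\int_\Om z(T;g,0)\,\psi\,dx=0$ for every $g\in L^2(Q)$, then $\psi=0$. To exploit this, I would introduce the solution $w$ of the adjoint problem \eqref{3.1} with $f=0$ and terminal data $w_T=\psi$; its well-posedness follows from Theorem \ref{exist1} after the time reversal $t\mapsto T-t$.

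The next step is to establish the duality identity. Multiplying the state equation in \eqref{1.1} (with $z_0=0$) by $w$, integrating over $Q$, integrating by parts in $t$, and using that $\mathcal A=\Div(|x|^\alpha A\nabla\,\cdot\,)$ is formally self-adjoint (because $A$ is symmetric) together with the homogeneous Dirichlet conditions satisfied by both $z$ and $w$, the initial term vanishes since $z(0)=0$ and all remaining interior and boundary contributions cancel, leaving
\[
\int_\Om z(T;g,0)\,\psi\,dx=\int_0^T\!\!\int_{\om_0} g\,w\,dx\,dt .
\]
Consequently the orthogonality condition $\int_\Om z(T;g,0)\,\psi\,dx=0$ for all $g\in L^2(Q)$ forces $w=0$ on $\om_0\times(0,T)$.

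Finally I would invoke the unique continuation theorem established above as a consequence of the Carleman estimate \eqref{08.16.1}: since $w=0$ in $\om_0\times(0,T)$, it follows that $w\equiv 0$ in $Q$, and in particular $\psi=w(\cdot,T)=0$. This gives the density of $R(0)$, hence the approximate controllability. The only point requiring genuine care is the rigorous justification of the integration-by-parts identity in the degenerate setting: one must verify that the weight $|x|^\alpha$ produces no spurious contribution at the degeneracy point $x=0$ and that the boundary terms on $\Gamma$ vanish, which is guaranteed by the functional framework $C^0([0,T];L^2(\Om))\cap L^2(0,T;\mcH_0^1(\Om))$ furnished by Theorem \ref{exist1}. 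All the remaining steps are routine functional analysis, the substantive analytic input being the already-assumed unique continuation.
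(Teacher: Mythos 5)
Your proposal is correct and follows exactly the standard Hahn--Banach duality argument that the paper itself invokes (without writing it out) when it asserts that approximate controllability follows ``easily'' from the unique continuation property of the adjoint system \eqref{3.1}: reduction to $z_0=0$ by linearity, the duality identity $\int_\Om z(T;g,0)\,\psi\,dx=\int_0^T\!\int_{\om_0} g\,w\,dx\,dt$ obtained from the weak formulation with the symmetric form $B[\cdot,\cdot;t]$, and unique continuation forcing $\psi=w(\cdot,T)=0$. Your closing caution about justifying the integration by parts in the degenerate setting is well placed, and is indeed handled by the functional framework of Theorem \ref{exist1}, so there is no gap.
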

	\begin{remark}
		It is worth noting that in case $0\notin \om_0$, we can only achieve approximate controllability. This is because the constant $C$ in \eqref{08.16.1} depends on $u$. Consequently, it is hard to directly derive the observability inequality from \eqref{08.16.1}. We will provide a more detailed explanation of this in Section 4.2.
	\end{remark}

	\section{Well-posed results}
	In this section we discuss the well-posedness of \eqref{1.1}. To begin, we shall present some preliminary results. 
	\subsection{Some preliminaries}
	
	\hspace*{\fill}\\
	
	Let us initially present some fundamental lemmas.
	\begin{lemma}\label{L2.2}
		For any $N\ge 2$ and $\alpha\in (0,2)$, we have $|x|^{\frac{\alpha}{2}-1} z \in L^2(\Om)$ for all $z\in \mcH_0^1(\Om)$ with 
		\begin{equation}\label{2.1}
			(N-2+\alpha) \left\| |x|^{\frac{\alpha}{2}-1}  z \right\| _{L^2(\Om)} \le C\left\| z \right\| _{\mcH_0^1(\Om)}, 
		\end{equation}
		where $C>0$ is a constant that is depending on $z\in \mathcal{H}_0^1(\Om)$. 
		Furthermore, $z\in L^2(\Om)$ if $z\in \mathcal{H}_0^1(\Om)$. 
	\end{lemma}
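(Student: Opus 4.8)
The plan is to prove a weighted Hardy-type inequality for the degenerate space $\mcH_0^1(\Om)$, namely that $|x|^{\frac{\alpha}{2}-1}z$ is square-integrable and controlled by the energy norm, together with the fact that membership in $\mcH_0^1(\Om)$ already forces $z\in L^2(\Om)$. First I would exploit the uniform ellipticity $\xi^T A\xi\ge\beta|\xi|^2$ to bound the genuine gradient: since $\|z\|^2_{\mcH_0^1(\Om)}=\int_\Om |x|^\alpha A\nabla z\cdot\nabla z\,dx\ge\beta\int_\Om |x|^\alpha|\nabla z|^2\,dx$, it suffices to prove the estimate \eqref{2.1} with $\int_\Om|x|^\alpha|\nabla z|^2\,dx$ on the right, which reduces the matter to a scalar weighted Hardy inequality and lets me discard the matrix $A$ entirely.

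The core step is the weighted Hardy inequality itself. Working first on smooth compactly supported $z$ (and then passing to the limit by the very definition of $\mcH_0^1(\Om)$ via $W_0^{1,1}$), I would start from the divergence identity $\Div\!\big(x\,|x|^{\alpha-2}\big)=(N-2+\alpha)|x|^{\alpha-2}$, valid away from the origin in $\RR^N$. Multiplying by $z^2$ and integrating over $\Om^\epsilon=\{|x|>\epsilon\}$ to avoid the singularity, integration by parts gives
\begin{equation*}
(N-2+\alpha)\int_{\Om^\epsilon}|x|^{\alpha-2}z^2\,dx = -\int_{\Om^\epsilon} x\,|x|^{\alpha-2}\cdot\nabla(z^2)\,dx + \text{(boundary terms)}.
\end{equation*}
The boundary integral on $\Gamma$ vanishes because $z\in W_0^{1,1}$ has zero trace, and the inner boundary integral over $S(\epsilon)$ is $O(\epsilon^{N-2+\alpha})$, hence negligible as $\epsilon\to0^+$ precisely because $N\ge2$ and $\alpha>0$ make the exponent positive. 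The volume term is $-2\int x\,|x|^{\alpha-2}z\cdot\nabla z\,dx=-2\int |x|^{\alpha-1}z\,\tfrac{x}{|x|}\cdot\nabla z\,dx$, which I would estimate by Cauchy--Schwarz as $2\big(\int|x|^{\alpha-2}z^2\big)^{1/2}\big(\int|x|^{\alpha}|\nabla z|^2\big)^{1/2}$. Writing $I:=\int|x|^{\alpha-2}z^2$ and $E:=\int|x|^\alpha|\nabla z|^2$, this yields $(N-2+\alpha)I\le 2\sqrt{I}\sqrt{E}$, so $\sqrt{I}\le \tfrac{2}{N-2+\alpha}\sqrt{E}$, which is exactly \eqref{2.1} after absorbing $\beta$ into $C$.

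For the final assertion that $z\in L^2(\Om)$, I would split $\Om=\Om_\epsilon\cup\Om^\epsilon$. On $\Om^\epsilon$ the weight $|x|^{\frac{\alpha}{2}-1}$ is bounded below by a positive constant, so $\int_{\Om^\epsilon}z^2\,dx\le C(\epsilon)\int_{\Om^\epsilon}|x|^{\alpha-2}z^2\,dx<\infty$ by \eqref{2.1}; on the bounded region $\Om_\epsilon$ near the origin, since $\alpha<2$ we have $\frac{\alpha}{2}-1<0$, so $|x|^{\frac{\alpha}{2}-1}\ge \epsilon^{\frac{\alpha}{2}-1}$ there, again giving $\int_{\Om_\epsilon}z^2\le \epsilon^{2-\alpha}\int_{\Om_\epsilon}|x|^{\alpha-2}z^2$ finite. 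Summing the two pieces gives $z\in L^2(\Om)$. I expect the main obstacle to be the rigorous justification of the integration by parts and the vanishing of the boundary term at $S(\epsilon)$: one must confirm that the limiting procedure is legitimate for the full class $\mcH_0^1(\Om)$ rather than only for smooth functions, and carefully track that the constant genuinely blows up only in the critical case $N-2+\alpha=0$, which is excluded here by the hypotheses $N\ge2$ and $\alpha\in(0,2)$.
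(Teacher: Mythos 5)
Your core computation (the divergence identity for $x|x|^{\alpha-2}$, integration by parts on $\Om^\epsilon$, Cauchy--Schwarz, and the ellipticity reduction to the scalar weight) is exactly the engine the paper uses, but your proposal has a genuine gap in the approximation step. To make the $S(\epsilon)$ boundary integral $O(\epsilon^{N-2+\alpha})$ you need $z$ bounded on $S(\epsilon)$, which is why you restrict to smooth compactly supported $z$; but the subsequent ``passing to the limit by the very definition of $\mcH_0^1(\Om)$ via $W_0^{1,1}$'' does not work. Approximation of $z$ in the $W^{1,1}$-norm gives no control whatsoever on the weighted energies $\int_\Om|x|^\alpha|\nabla z_n|^2\,dx$ of the approximants, so the right-hand side of the smooth-function inequality need not stay bounded by $\|z\|_{\mcH_0^1(\Om)}$ in the limit. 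What you actually need is density of $C_0^\infty(\Om)$ in the $\mcH_0^1(\Om)$-norm (together with a.e.\ convergence and Fatou on the left side) --- and in this paper that is Lemma \ref{dense}, whose proof \emph{uses} \eqref{2.1} to show $\int_{\frac1n<|x|<\frac2n}|x|^{\alpha-2}z^2\,dx\to0$. So within the paper's logical development your route is circular, unless you supply an independent density argument (e.g., via the $A_2$-Muckenhoupt property of $|x|^\alpha$ for $\alpha\in(0,2)$, $N\ge2$ --- a nontrivial input you do not invoke).

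The repair is the paper's observation that the $S(\epsilon)$ term need not vanish at all: it has a favorable sign. For arbitrary $z\in\mcH_0^1(\Om)$, the restriction to $\Om^\epsilon$ lies in $W^{1,2}(\Om^\epsilon)$ (the weight is bounded above and below away from the origin), so integration by parts there is legitimate, with zero trace on $\Gamma$; since the outward normal of $\Om^\epsilon$ on $S(\epsilon)$ is $-x/|x|$, the boundary contribution is
\begin{equation*}
-\int_{S(\epsilon)}|x|^{\alpha-1}z^2\,ds\le 0,
\end{equation*}
and can simply be dropped, yielding $(N-2+\alpha)\int_{\Om^\epsilon}|x|^{\alpha-2}z^2\,dx\le -2\int_{\Om^\epsilon}|x|^{\alpha-2}z\,(x\cdot\nabla z)\,dx$ directly. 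Cauchy--Schwarz on $\Om^\epsilon$ and monotone convergence as $\epsilon\to0^+$ (using $|x|^\alpha A\nabla z\cdot\nabla z\in L^1(\Om)$) then give \eqref{2.1} for all of $\mcH_0^1(\Om)$ with no density needed. Your ellipticity reduction and your deduction of $z\in L^2(\Om)$ are correct, though the latter needs no splitting: $z^2=|x|^{2-\alpha}\cdot|x|^{\alpha-2}z^2\le(\mathrm{diam}\,\Om)^{2-\alpha}|x|^{\alpha-2}z^2$ on all of $\Om$ since $\alpha<2$. Finally, your closing worry points the wrong way: $N-2+\alpha>0$ is automatic from $N\ge2$, $\alpha>0$; the real obstacle is the approximation step identified above.
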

	\begin{proof}
		If $z \in \mcH_0^1(\Om)$ its restriction belongs to $W^{1,2}\left(\Om^{\epsilon}\right)$ and  thus its trace represents a bounded linear map into $L^2\left(\partial \Om^{\epsilon}\right)$. 
		Then
		$$
		\begin{aligned}
			2 \int_{\Om^{\epsilon}} |x|^{\alpha-2}  z(x \cdot \nabla z) d x & =\int_{\Om^{\epsilon}} |x|^{\alpha-2}  x \cdot \nabla\left(z^2\right) d x \\
			& =-\int_{S(\epsilon)} |x|^{\alpha-1}  z^2 d s-\int_{\Om^{\epsilon}}(N-2+\alpha) |x|^{\alpha-2}  z^2 d x
		\end{aligned}
		$$
		since the trace of $z$ is zero on $\partial \Omega$, we have
		$$
		\begin{aligned}
			(N-2+\alpha) \int_{\Om^{\epsilon}} |x|^{\alpha-2}  z^2 d x & \le-2 \int_{\Om^{\epsilon}} |x|^{\alpha-2}  z(x \cdot \nabla z) d x \le 2 \int_{\Om^{\epsilon}}\left(|x|^{\frac{\alpha}{2}-1}  |z|\right)\left(|x|^{\frac{\alpha}{2}}|\nabla z|\right) d x \\
			& \le 2\left\{\int_{\Om^{\epsilon}} |x|^{\alpha-2} z^2 d x\right\}^{1 / 2}\left\{\int_{\Om^{\epsilon}} |x|^\alpha   \nabla z \cdot \nabla z  d x\right\}^{1 / 2}\\
			&\leq C\left\{\int_{\Om^{\epsilon}} |x|^{\alpha-2} z^2 d x\right\}^{1 / 2}\left\{\int_{\Om^{\epsilon}} |x|^\alpha   A\nabla z \cdot \nabla z  d x\right\}^{1 / 2}
		\end{aligned}
		$$
		and \eqref{2.1} follows by letting $\epsilon \rightarrow 0^+$ since $|x|^\alpha A\nabla z\cdot\nabla z \in L^1(\Omega)$.
	\end{proof}		
	\begin{lemma}\label{HS}	
		The space $(\mcH_0^1(\Omega),\left\langle \cdot,\cdot\right\rangle _{\mcH_0^1(\Omega)})$ is a Hilbert space.   
	\end{lemma}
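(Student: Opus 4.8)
The plan is to check in turn that $\langle\cdot,\cdot\rangle_{\mcH_0^1(\Om)}$ is an inner product and that the resulting normed space is complete. Symmetry and bilinearity of the form are immediate from the symmetry of $A$ and linearity of the integral, so the only nontrivial algebraic point is positive-definiteness. Suppose $\|z\|_{\mcH_0^1(\Om)}=0$. The uniform ellipticity $\xi^{T}A\xi\ge\beta|\xi|^2$ gives $\beta\int_\Om|x|^\alpha|\nabla z|^2\,dx\le\|z\|^2_{\mcH_0^1(\Om)}=0$, whence $\nabla z=0$ a.e. on $\Om$ because $|x|^\alpha>0$ for a.e. $x$. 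Since $z\in W_0^{1,1}(\Om)$, the Poincar\'e--Friedrichs inequality on the bounded domain $\Om$ yields $\|z\|_{L^1(\Om)}\le C\|\nabla z\|_{L^1(\Om)}=0$, so $z=0$; thus $\langle\cdot,\cdot\rangle_{\mcH_0^1(\Om)}$ is a genuine inner product.

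For completeness, let $(z_n)$ be Cauchy in $\mcH_0^1(\Om)$. I would first extract the natural weighted limit: by ellipticity, $\||x|^{\alpha/2}\nabla(z_n-z_m)\|^2_{L^2(\Om)}\le\beta^{-1}\|z_n-z_m\|^2_{\mcH_0^1(\Om)}$, so $(|x|^{\alpha/2}\nabla z_n)$ is Cauchy in $L^2(\Om;\RR^N)$ with some limit $G$. The decisive elementary fact is that $\alpha<2\le N$ forces $\alpha<N$, hence $\int_\Om|x|^{-\alpha}\,dx<\infty$ and $|x|^{-\alpha/2}\in L^2(\Om)$. Writing $\nabla z_n=|x|^{-\alpha/2}\,(|x|^{\alpha/2}\nabla z_n)$ and applying Cauchy--Schwarz therefore upgrades the weighted convergence to $\nabla z_n\to V:=|x|^{-\alpha/2}G$ in $L^1(\Om;\RR^N)$; the Poincar\'e--Friedrichs inequality then makes $(z_n)$ Cauchy in $L^1(\Om)$ (consistently with the embedding $\mcH_0^1(\Om)\hookrightarrow L^2(\Om)$ from Lemma \ref{L2.2}), with some limit $z$.

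It remains to identify the limit and close the loop. Since $z_n\to z$ in $L^1(\Om)$ and $\nabla z_n\to V$ in $L^1(\Om;\RR^N)$, the function $z$ lies in $W^{1,1}(\Om)$ with $\nabla z=V$, and because $W_0^{1,1}(\Om)$ is a closed subspace of $W^{1,1}(\Om)$ we get $z\in W_0^{1,1}(\Om)$. Consequently $|x|^{\alpha/2}\nabla z=|x|^{\alpha/2}V=G\in L^2(\Om;\RR^N)$, so with $\Lambda:=\sup_{\overline\Om}\|A\|<\infty$ (finite since $A_{ij}\in C^1(\overline\Om)$) we obtain $\int_\Om|x|^\alpha A\nabla z\cdot\nabla z\,dx\le\Lambda\|G\|^2_{L^2(\Om)}<\infty$, i.e. $z\in\mcH_0^1(\Om)$. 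Finally $\|z_n-z\|^2_{\mcH_0^1(\Om)}\le\Lambda\||x|^{\alpha/2}\nabla(z_n-z)\|^2_{L^2(\Om)}\to0$, so $z_n\to z$ in $\mcH_0^1(\Om)$ and the space is complete.

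The main obstacle is the degeneracy of the weight $|x|^\alpha$ at the interior point $x=0$: a priori the $\mcH_0^1(\Om)$-norm controls only the \emph{weighted} gradient $|x|^{\alpha/2}\nabla z$, and neither bounds $z_n$ itself nor guarantees that the $L^2$-limit $G$ is the weighted gradient of a $W_0^{1,1}(\Om)$-function. The two facts that overcome this are the integrability $|x|^{-\alpha/2}\in L^2(\Om)$, valid precisely because $\alpha<2\le N$, which converts weighted-$L^2$ gradient convergence into honest $W^{1,1}(\Om)$ convergence, and the control of $z_n$ near the origin supplied by Lemma \ref{L2.2}. Were $\alpha$ allowed to reach the critical threshold $N$, so that $|x|^{-\alpha/2}\notin L^2(\Om)$, this identification of the limit---and with it the preservation of the Dirichlet boundary condition---would fail.
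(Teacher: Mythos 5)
Your proof is correct, and it rests on the same decisive fact as the paper's: since $\alpha<2\le N$, $|x|^{-\alpha}\in L^1(\Om)$, so Cauchy--Schwarz upgrades convergence of the weighted gradients $|x|^{\alpha/2}\nabla z_n$ in $L^2$ to convergence of the raw gradients $\nabla z_n$ in $L^1$, which is what identifies the limit gradient. The execution differs in two respects worth noting. First, the paper obtains the limit function via the embedding $\mcH_0^1(\Om)\hookrightarrow L^2(\Om)$ of Lemma \ref{L2.2} (so $v_n\to v$ strongly in $L^2$) and then identifies $\nabla v$ by uniqueness of distributional limits, testing against $\varphi\in C_0^\infty(\Om)$; you instead bypass Lemma \ref{L2.2} entirely, using the Poincar\'e--Friedrichs inequality in $W_0^{1,1}(\Om)$ to make $(z_n)$ Cauchy in $L^1$ directly from the $L^1$-Cauchyness of the gradients, which is slightly more elementary and self-contained. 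Second, your write-up completes several closure steps that the paper compresses into ``if we show that $\nabla v=|x|^{-\alpha/2}A^{-1/2}g$, we obtain the conclusion'': you verify positive-definiteness of the form (via ellipticity plus Poincar\'e), the preservation of the boundary condition ($z\in W_0^{1,1}(\Om)$ by closedness of $W_0^{1,1}$ in $W^{1,1}$ --- a point the paper never addresses and which is genuinely needed for $z$ to belong to $\mcH_0^1(\Om)$ as defined), the finiteness of $\int_\Om|x|^\alpha A\nabla z\cdot\nabla z\,dx$, and the actual norm convergence $z_n\to z$ in $\mcH_0^1(\Om)$. A cosmetic difference: the paper works with $|x|^{\alpha/2}A^{1/2}\nabla v_n$ and the auxiliary Hilbert space $L_\alpha^2(\Om)$, whereas you strip off $A^{1/2}$ at the outset using ellipticity and boundedness of $A$ on $\overline\Om$, which makes the bookkeeping cleaner at no loss of generality.
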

	
	\begin{proof}	
		Firstly, we easily verify that $(\mcH_0^1(\Omega),\left\langle \cdot,\cdot\right\rangle_{\mcH_0^1(\Omega)})$ is an inner space. 			
		Next let us prove
		$(\mcH_0^1(\Omega),\left\langle \cdot,\cdot\right\rangle_{\mcH_0^1(\Omega)})$ is a Hilbert space.  
		For simplicity, we define the weighted space
		\begin{equation*}
			L_\alpha^2(\Om):=\left\lbrace w\left|\ \! w: \Om\ra \mb{R}^N \mbox{ is a measurable function}, \mbox{ and } \int_{\Om} |x|^\alpha A w\cdot w dx  < \infty\right. \right\rbrace.
		\end{equation*}
		It is easily checked that $L_\alpha^2(\Om)$ is a Hilbert space with the inner product
		\begin{equation*}
			(w_1,w_2)=\int_\Om |x|^\alpha Aw_1\cdot w_2dx=\int_\Om \left(|x|^\frac{\alpha}{2}A^\frac{1}{2}w_1\right)\cdot\left(|x|^\frac{\alpha}{2}A^\frac{1}{2}w_2\right)dx. 
		\end{equation*}
		
		Let $\{v_n\}_{n\in\mb{N}}\subset\mcH_0^1(\Omega)$ be a Cauchy sequence. Then there exist $v\in L^2(\Omega)$, $g=(g_1,g_2,\dots,g_N)\in L^2(\Omega)^N$ such that 
		\begin{equation*}
			v_n\ra v \mbox{ strongly in } L^2(\Om), \mbox{and } |x|^{\frac{\alpha}{2}}A^{\frac{1}{2}}\nabla v_n \ra g \mbox{ strongly in } L^2(\Omega)^N. 
		\end{equation*}
		Therefore, if we show that $\nabla v = |x|^{-\frac{\alpha}{2}}A^{-\frac{1}{2}}g$, we obtain the 
		conclusion. For this purpose, since $v_n\ra v \mbox{ strongly in } L^2(\Omega)$, one has $\frac{\partial v_n}{\partial x_i}\rightarrow \frac{\partial v}{\partial x_i}\ (i=1,2,\dots N)$ in the sense of distribution 
		and the distributional limit is unique. So, it is sufficient to prove that
		\begin{equation*}
			\nabla v_n\ra |x|^{-\frac{\alpha}{2}}A^{-\frac{1}{2}}g  \mbox{ strongly in } L_\alpha^2(\Om) \Rightarrow \nabla v_n\ra  |x|^{-\frac{\alpha}{2}}A^{-\frac{1}{2}}g \mbox{ strongly in } \mathcal{D}'(\Om)
		\end{equation*}
		according to $|x|^{\frac{\alpha}{2}}A^{\frac{1}{2}}\nabla v_n \ra g \mbox{ strongly in } L^2(\Omega)^N$, where $\mathcal{D}'(\Om)$ represents the dual space of $C_0^{\infty}(\Omega)$. Indeed, 
		for all $\varphi\in C_0^{\infty}(\Omega)$, one has that
		\begin{equation*}
			\begin{split}
				&\left|\int_\Om (\nabla v_n  - |x|^{-\frac{\alpha}{2}}A^{-\frac{1}{2}}g)\varphi dx \right|\\ 
				&\qquad\le \left\| \varphi\right\|_{L^\infty (\Om)}  \int_{\Om} \left| \nabla v_n  - |x|^{-\frac{\alpha}{2}}A^{-\frac{1}{2}}g \right| dx\\
				&\qquad\le\|\varphi\|_{L^\infty(\Om)}\int_\Om \left||x|^{-\frac{\alpha}{2}}A^{-\frac{1}{2}}\right| \left||x|^\frac{\alpha}{2}A^\frac{1}{2}\nabla v_n-g\right|dx\\
				&\qquad\le C\|\varphi\|_{L^\infty(\Om)}\left(\int_\Om |x|^{-\alpha}d x\right)^\frac{1}{2}\left(\int_\Om \left||x|^\frac{\alpha}{2}A^\frac{1}{2}\nabla v_n-g\right|^2dx\right)^\frac{1}{2}\\
				&\qquad\le C\|\varphi\|_{L^\infty(\Om)}\left\|\nabla v_n-|x|^{-\frac{\alpha}{2}}A^{-\frac{1}{2}}g\right\|_{L_\alpha^2(\Om)}\rightarrow 0, \ n\rightarrow\infty,
			\end{split}
		\end{equation*}
		here and in what follows, we denote $C$ the different constants by the context.
	\end{proof}

	\begin{lemma}\label{dense}
		$C_0^{\infty}(\Omega)$ is dense in $\mcH_0^{1}(\Omega)$.
	\end{lemma}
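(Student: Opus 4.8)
The plan is to approximate a general $z \in \mcH_0^1(\Om)$ first by functions that vanish in a neighbourhood of the degeneracy point $x=0$, and then to invoke the classical density of $C_0^\infty$ in the unweighted Sobolev space away from the origin. The guiding observation is that the weight $|x|^\alpha$ is bounded above on $\overline{\Om}$ (as $\Om$ is bounded and $\alpha>0$) and bounded below on any region $\{|x|>\delta\}$; only near $x=0$ does the degeneracy create a genuine difficulty, and that will be absorbed by the Hardy-type inequality \eqref{2.1} of Lemma~\ref{L2.2}. Note that the $\mcH_0^1(\Om)$ norm involves only the weighted gradient, so it suffices to control $\nabla(z-\varphi)$ in the weighted $L^2$ sense.

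Concretely, I would fix a cut-off $\theta_\delta \in C^\infty(\RR^N)$ with $\theta_\delta \equiv 0$ on $B_\delta$, $\theta_\delta\equiv 1$ outside $B_{2\delta}$, $0\le\theta_\delta\le 1$ and $|\nabla\theta_\delta|\le C/\delta$, and set $z_\delta := \theta_\delta z$. The first task is to show $z_\delta \to z$ in $\mcH_0^1(\Om)$ as $\delta\to 0^+$. Writing $\nabla(z-z_\delta) = (1-\theta_\delta)\nabla z - z\nabla\theta_\delta$ and using the uniform ellipticity and boundedness of $A$, it is enough to estimate two pieces. The piece $\int_\Om (1-\theta_\delta)^2 |x|^\alpha A\nabla z\cdot\nabla z\,dx$ is supported in $B_{2\delta}$ and tends to $0$ by dominated convergence, since $|x|^\alpha A\nabla z\cdot\nabla z\in L^1(\Om)$. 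For the piece $\int_\Om |z|^2 |x|^\alpha A\nabla\theta_\delta\cdot\nabla\theta_\delta\,dx$, I would use that on the annulus $B_{2\delta}\setminus B_\delta$ one has $|x|^\alpha|\nabla\theta_\delta|^2 \le C\delta^{\alpha-2}\le C'|x|^{\alpha-2}$ (because $\alpha-2<0$ and $|x|\sim\delta$ there), so this piece is dominated by $C'\int_{B_{2\delta}\setminus B_\delta}|x|^{\alpha-2}|z|^2\,dx$, which tends to $0$ by dominated convergence since $|x|^{\frac\alpha2-1}z\in L^2(\Om)$ by Lemma~\ref{L2.2}. This is precisely where the degeneracy is felt, and it is the crux of the argument: without the Hardy inequality \eqref{2.1} the interaction of the singular weight with the cut-off could not be controlled. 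I expect this origin cut-off to be the main obstacle.

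Once $z_\delta \to z$ is established, I would note that $z_\delta$ vanishes on $B_\delta$, so on its support $\delta^\alpha \le |x|^\alpha \le R^\alpha$ with $R=\sup_{x\in\Om}|x|$; together with $z\in L^2(\Om)$ (again Lemma~\ref{L2.2}) this gives $z_\delta\in H^1(\Om)$, and since $z$ has zero trace (being in $W_0^{1,1}(\Om)$) and $\Gamma$ is $C^2$, in fact $z_\delta\in H_0^1(\Om)$. By the classical density of $C_0^\infty(\Om)$ in $H_0^1(\Om)$, choose $\varphi_n\in C_0^\infty(\Om)$ with $\varphi_n\to z_\delta$ in $H^1(\Om)$. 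Because $|x|^\alpha$ is bounded above on $\overline{\Om}$ and $A$ is bounded, $H^1$-convergence forces $\mcH_0^1(\Om)$-convergence:
$$
\int_\Om |x|^\alpha A\nabla(z_\delta-\varphi_n)\cdot\nabla(z_\delta-\varphi_n)\,dx \le R^\alpha\|A\|_{L^\infty}\int_\Om|\nabla(z_\delta-\varphi_n)|^2\,dx\longrightarrow 0.
$$

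Finally, combining the two approximations by a standard $\varepsilon/2$ argument produces, for any $z\in\mcH_0^1(\Om)$ and any prescribed tolerance, a function in $C_0^\infty(\Om)$ within that tolerance in the $\mcH_0^1(\Om)$ norm, which is the asserted density. The boundary behaviour and the smoothing step are routine, being governed by the non-degenerate, bounded regime of the weight away from $x=0$; the entire weight of the proof rests on the origin cut-off controlled via Lemma~\ref{L2.2}.
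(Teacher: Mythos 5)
Your proposal is correct and is essentially the paper's own argument: your cut-off $\theta_\delta$ is exactly the paper's $\xi_n(x)=\xi(nx)$ with $\delta=1/n$, you control the annulus term $\int |z|^2|x|^\alpha|\nabla\theta_\delta|^2\,dx$ by the Hardy-type inequality \eqref{2.1} of Lemma~\ref{L2.2} and the remaining term by absolute continuity of the integral, and you conclude via the classical density of $C_0^\infty(\Omega)$ in $H_0^1(\Omega)$ together with the bound $\|\cdot\|_{\mcH_0^1(\Om)}\le D\|\cdot\|_{H_0^1(\Om)}$, all precisely as in the paper. The only (harmless) difference is that you justify $z_\delta\in H_0^1(\Omega)$ explicitly through the zero-trace characterization on the $C^2$ boundary, a step the paper asserts without comment.
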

	
	
	\begin{proof}
		To demonstrate the density of $C_0^{\infty}(\Omega)$ in $\mcH_0^1(\Omega)$,  choose $\xi \in C^\infty(\mathbb{R}^N)$ such that $0\le \xi \le 1$ for all $x\in \mathbb{R}^N$ with $\xi(x)=1$ for $|x|\ge 2$ and $\xi(x)=0$ for $|x|\le 1$. 
		
		Consider integers $n\ge \frac{2}{\epsilon}$ where $\epsilon \in (0,\epsilon_0)$ and set $\xi_n (x)=\xi (nx)$. Note that $ | \nabla \xi_n (x) | \le n \left\| \nabla \xi \right\| _{L^\infty}$ for all $x$. Then for any $z\in \mathcal{H}_0^1(\Om)$, $z_n=\xi_n z$ belongs to $H_0^1 (\Om)$ and
		\begin{equation*}
			\begin{split}
				\left\| z-z_n \right\|^2_{\mcH_0^1(\Om)}=&\int_{\Om}  |x|^\alpha A  \nabla \left( (1-\xi_n) z\right) \cdot \nabla \left( (1-\xi_n) z\right) dx\\
				\le& 2\int_{\Om}  \left(|x|^\alpha A \nabla \xi_n \cdot \nabla \xi_n  z^2 + (1-\xi_n)^2 A\nabla z \cdot \nabla z \right)  dx.
			\end{split}
		\end{equation*}
		But
		\begin{equation*}
			\begin{split}
				\int_{\Om} |x|^\alpha A \nabla \xi_n \cdot \nabla \xi_n  z^2 dx &= \int_{\frac{1}{n}<|x|<\frac{2}{n}} |x|^\alpha A \nabla \xi_n \cdot \nabla \xi_n  z^2 dx\\
				&\le Cn^2 \left\| A^{\frac{1}{2}} \nabla \xi \right\| ^2_{L^\infty} \int_{\frac{1}{n}<|x|<\frac{2}{n}} \left( \frac{2}{n}\right) ^2 |x|^{\alpha-2}  z^2 dx\\
				&\le C \left\| A^{\frac{1}{2}} \nabla \xi \right\| ^2_{L^\infty} \int_{\frac{1}{n}<|x|<\frac{2}{n}} |x|^{\alpha-2}  z^2 dx ,
			\end{split}
		\end{equation*}
		which implies
		\begin{equation*}
			\int_{\Om} |x|^\alpha A \nabla \xi_n \cdot \nabla \xi_n  z^2 dx\ra 0 \mbox{ as } n\ra \infty,
		\end{equation*}
		where 
		\begin{equation*}
			\lim\limits_{n\to \infty} \int_{\frac{1}{n}<|x|<\frac{2}{n}}  |x|^{\alpha-2}  z^2 dx =0 
		\end{equation*}
		since   $|x|^{\alpha-2}  z^2 \in L^1 (\Om)$ from \eqref{2.1}.
		Moreover,
		\begin{equation*}
			\begin{split}
				&\int_{\Om} |x|^\alpha A (1-\xi_n)^2 \nabla z \cdot \nabla z dx =\int_{|x|\le \frac{2}{n}} |x|^\alpha A  (1-\xi_n)^2 \nabla z \cdot \nabla z dx \le \int_{|x|\le \frac{2}{n}} |x|^\alpha A   \nabla z \cdot \nabla z dx,
			\end{split}
		\end{equation*}
		which implies
		\begin{equation*}
			\int_{\Om} |x|^\alpha A (1-\xi_n)^2 \nabla z \cdot \nabla z dx \mbox{ as } n\rightarrow 0, 
		\end{equation*}
		where
		\begin{equation*}
			\lim\limits_{n\to \infty} \int_{|x|\le \frac{2}{n}}   |x|^{\alpha} A \nabla z \cdot \nabla z dx =0
		\end{equation*}
		since $\int_{\Om} |x|^{\alpha} A \nabla z \cdot \nabla z dx < \infty$. 
		Hence $\xi_n z \to z$ in $\mathcal{H}_0^1(\Om)$, showing that $H_0^1(\Om)$ is dense in $\mcH_0^1(\Om)$.
		But $C_0^{\infty}(\Omega)$ is dense in $H_0^1(\Om)$ with its Dirichlet norm
		\begin{equation*}
			\left\| z \right\|_{H_0^1(\Om)} = \left( \int_{\Om} |\nabla z |^2 dx \right) ^{\frac{1}{2}},
		\end{equation*}
		and, since $\Om$ is bounded, there exists a constant $D$ such that
		\begin{equation*}
			\|z\|_{\mathcal{H}_0^1(\Om)}=\int_{\Om} |x|^\alpha A  \nabla z \cdot \nabla z dx \le D \int_{\Om} |\nabla z |^2 dx=D\|z\|_{H_0^1(\Om)} \mbox{ for all } z\in H_0^1(\Om). 
		\end{equation*}
		This proves that $C_0^{\infty}(\Omega)$ is dense in $\mcH_0^1(\Om)$.
	\end{proof}

	\begin{theorem}\label{compactembed}
		$\mcH_0^1(\Omega)$ is compactly embeded in $L^2(\Omega)$.
	\end{theorem}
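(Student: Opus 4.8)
The plan is to combine the classical Rellich--Kondrachov theorem on the region away from the degeneracy point with the weighted Hardy inequality of Lemma \ref{L2.2} to control the $L^2$ mass near the origin, and then to patch the two regions together by a diagonal extraction. Fix a bounded sequence $\{z_n\}\subset\mcH_0^1(\Om)$ with $\|z_n\|_{\mcH_0^1(\Om)}\le M$. By the last assertion of Lemma \ref{L2.2} the sequence is automatically bounded in $L^2(\Om)$, so the content of the statement is to extract a subsequence converging strongly in $L^2(\Om)$. I note at the outset that the constant in the Hardy inequality \eqref{2.1} is in fact \emph{uniform}, depending only on $N$, $\alpha$ and the ellipticity constant $\beta$ (this is visible from the proof of Lemma \ref{L2.2}, where it arises from $A\nabla z\cdot\nabla z\ge\beta|\nabla z|^2$), and this uniformity is what the argument requires.

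First, away from the origin. For fixed $\epsilon\in(0,\epsilon_0)$ the weight satisfies $|x|^\alpha\ge\epsilon^\alpha$ on $\Om^\epsilon$, and together with uniform ellipticity this gives
\begin{equation*}
\int_{\Om^\epsilon}|\nabla z_n|^2\,dx\le\frac{1}{\beta\epsilon^\alpha}\int_{\Om^\epsilon}|x|^\alpha A\nabla z_n\cdot\nabla z_n\,dx\le\frac{M^2}{\beta\epsilon^\alpha}.
\end{equation*}
Hence $\{z_n\}$ is bounded in $H^1(\Om^\epsilon)$. Since $\partial\Om$ is of class $C^2$ and $S(\epsilon)$ is a smooth sphere lying strictly inside $\Om$ (as $\epsilon<\epsilon_0<d(0,\Gamma)$), the set $\Om^\epsilon$ is a bounded domain with $C^2$ boundary, so Rellich--Kondrachov yields a subsequence converging strongly in $L^2(\Om^\epsilon)$.

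Second, near the origin I would use \eqref{2.1} to show the $L^2$ mass carried by $\Om_\epsilon$ is uniformly small. Since $2-\alpha>0$ we have $|x|^{2-\alpha}\le\epsilon^{2-\alpha}$ on $\Om_\epsilon$, whence
\begin{equation*}
\int_{\Om_\epsilon}z_n^2\,dx=\int_{\Om_\epsilon}|x|^{2-\alpha}\,|x|^{\alpha-2}z_n^2\,dx\le\epsilon^{2-\alpha}\int_\Om|x|^{\alpha-2}z_n^2\,dx\le C\,\epsilon^{2-\alpha}M^2,
\end{equation*}
the last step being the Hardy inequality \eqref{2.1} with its uniform constant. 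The decisive feature is that this bound is independent of $n$ and tends to $0$ as $\epsilon\to0^+$; this is the mechanism taming the degeneracy.

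Finally I would combine the two estimates by a diagonal argument. Taking $\epsilon_k=1/k$, extracting successively and diagonalizing produces one subsequence (still denoted $\{z_n\}$) that converges in $L^2(\Om^{\epsilon_k})$ for every $k$. To check it is Cauchy in $L^2(\Om)$, given $\delta>0$ one first fixes $k$ so large that $C\epsilon_k^{2-\alpha}M^2<\delta$, which simultaneously controls $\int_{\Om_{\epsilon_k}}z_n^2\,dx$ for all indices; on $\Om^{\epsilon_k}$ the established $L^2$ convergence makes the tail Cauchy. Splitting $\|z_n-z_m\|_{L^2(\Om)}^2$ over $\Om_{\epsilon_k}$ and $\Om^{\epsilon_k}$ then bounds it by a fixed multiple of $\delta$ for $n,m$ large, so the subsequence converges in $L^2(\Om)$, proving compactness. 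I expect the main obstacle to be exactly this uniform near-origin control: everything hinges on \eqref{2.1} holding with a constant independent of the function, so that the singular mass can be suppressed for the whole sequence at once; the away-from-origin Rellich step and the diagonal patching are then routine.
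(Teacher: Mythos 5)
Your proof is correct, but it reaches the crucial near-origin estimate by a genuinely different route than the paper. Both arguments split $\Om$ into $\Om^\epsilon$ and the ball $B(0,\epsilon)$ and use Rellich-type compactness away from the origin (the paper phrases this functional-analytically: a weakly null sequence in $\mcH_0^1(\Om)$ is shown, by a contradiction argument, to converge weakly in $W^{1,2}(\Om^\epsilon)$ and hence strongly in $L^2(\Om^\epsilon)$, whereas you extract subsequences directly from the uniform $H^1(\Om^\epsilon)$ bound). The difference is in taming $B(0,\epsilon)$: the paper invokes the Caffarelli--Kohn--Nirenberg-type embedding \eqref{LP} from \cite{catrina2001caffarelli}, giving a uniform $L^q$ bound with $q>2$, and then H\"older's inequality yields $\int_{B(0,\epsilon)}|z_n|^2\,dx\le C\,|B(0,\epsilon)|^{(q-2)/q}$, uniformly small as $\epsilon\to0^+$; you obtain the same uniform smallness directly from the weighted Hardy inequality \eqref{2.1} via $\int_{\Om_\epsilon}z_n^2\,dx\le\epsilon^{2-\alpha}\int_\Om|x|^{\alpha-2}z_n^2\,dx\le C\epsilon^{2-\alpha}M^2$, using $\alpha<2$. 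Your version is more elementary and self-contained, avoiding the external citation, and your explicit remark about uniformity of the Hardy constant is exactly the right point to flag: the statement of Lemma \ref{L2.2} literally says the constant ``depends on $z$,'' but, as you observe, its proof produces a constant depending only on the ellipticity constant $\beta$ (via $A\nabla z\cdot\nabla z\ge\beta|\nabla z|^2$) together with the factor $N-2+\alpha>0$, so the uniformity your argument hinges on is legitimate --- and the paper itself implicitly needs the same uniformity when it applies \eqref{LP}, which is derived in part from Lemma \ref{L2.2}, to the whole bounded sequence at once. Your diagonal extraction over $\epsilon_k=1/k$ with the Cauchy-splitting over $\Om_{\epsilon_k}$ and $\Om^{\epsilon_k}$ replaces the paper's $\limsup$ bookkeeping in \eqref{UN}; both are routine once the two regional estimates are in place, so each approach buys a cleaner version of one half of the proof: the paper's weak-convergence framing shortens the exterior step, while your Hardy bound shortens and quantifies the interior step (an explicit rate $\epsilon^{2-\alpha}$ instead of $|B(0,\epsilon)|^{(q-2)/q}$).
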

	\begin{proof}
		To establish the compactness of the embedding it is suffices to show that if $\left\{z_n\right\}$ is a sequence converging weakly to zero in $\mcH_0^1(\Om)$ as $n \rightarrow \infty$, then $\left\|z_n\right\|_{L^2(\Om)} \rightarrow 0$ as $n \rightarrow \infty$  by abstract subsequence.
		
		Since $\mcH_0^1(\Om)$ is continuously embedded in $L^2(\Omega)$ by Lemma \ref{L2.2}, $L^2(\Omega)^* \subset$ $\mcH_0^1(\Om)^*$ and hence $\left\{z_n\right\}$ converges weakly to zero in $L^2(\Omega)$. 
		
		Consider $\epsilon \in\left(0, \epsilon_0\right]$. If $\left\{z_n\right\}$ does not converge weakly to zero in $W^{1,2}\left(\Om^{\epsilon}\right)$, there exist $f \in W^{1,2}\left(\Om^{\epsilon}\right)^*$, a subsequence $\left\{z_{n_k}\right\}$ and $\delta>0$ such that $\left|f\left(z_{n_k}\right)\right| \geq \delta$ for all $n_k$. Passing to a further subsequence if necessary, we can suppose that $\left\{z_{n_k}\right\}$ converges weakly to an element $v$ in $W^{1,2}\left(\Om^{\epsilon}\right)$. Thus $\left\{z_{n_k}\right\}$ converges weakly to $v$ in $L^2\left(\Om^{\epsilon}\right)$ and so $v=0$ a.e. on $\Om^{\epsilon}$ since $\left\{z_n\right\}$ converges weakly to zero in $L^2(\Omega)$ and hence also on $L^2\left(\Om^{\epsilon}\right)$. But then $f\left(z_{n_k}\right) \rightarrow f(v)=f(0)=0$ as $n_k \rightarrow \infty$, contradicting the choice of $\delta$. Hence $\left\{z_n\right\}$ converges weakly to zero in $W^{1,2}\left(\Om^{\epsilon}\right)$ and therefore $\left\|z_n\right\|_{L^2\left(\Om^{\epsilon}\right)} \rightarrow 0$ as $n \rightarrow \infty$ . It follows that
		\begin{equation}\label{UN}
			\limsup _{n \rightarrow \infty}\left\|z_n\right\|_{L^2(\Om)}^2=\limsup _{n \rightarrow \infty} \int_{B(0, \epsilon)}\left|z_n\right|^2 d x.
		\end{equation}
		But from \cite{catrina2001caffarelli} and Lemma \ref{L2.2}, we have
		\begin{equation}\label{LP}
			\left\|z \right\|_{L^q(\Om)} \le C \left\| z\right\|_{\mcH_0^1(\Om)}, \ 1\le q \le \frac{2N}{N-2+\alpha},
		\end{equation}
		then (taking $q>2$)
		\begin{equation*}
			\int_{B(0, \epsilon)}\left|z_n\right|^2 d x \le \left(\int_{B(0, \epsilon)} |1|^ {\frac{q}{q-2}} dx\right) ^{\frac{q-2}{q}} \left( \int_{B(0, \epsilon)} \left( |z_n|^2 \right) ^{\frac{q}{2}}dx\right)^{\frac{2}{q}}
			\le |B(0, \epsilon)|^ {\frac{q-2}{q}} \left\| z_n \right\| ^2_{L^q(\Om)}.
		\end{equation*}
		The weak convergence of $\left\{z_n\right\}$ in $\mcH_0^1(\Om)$ and \eqref{LP} imply that this sequence is bounded in $L^q(\Omega)$, then (note that $q>2$)
		$$
		\int_{B(0, \epsilon)}\left|z_n\right|^2 d x \le C |B(0,\epsilon)|^\frac{q-2}{q}\left\| z_n\right\|^{2}_{\mcH_0^1(B(0, \epsilon))}\leq C |B(0,\epsilon)|^\frac{q-2}{q}.
		$$
		Letting $\epsilon \rightarrow 0^+$ in \eqref{UN} shows that $\left\|z_n\right\|_{L^2(\Om)} \rightarrow 0$ as $n \rightarrow \infty$, completing the proof.
	\end{proof}
	
	\subsection{Proof of Theorem \ref{exist1}}
	
	\hspace*{\fill}\\
	
	Here, we will use the classical Galerkin method to prove Theorem \ref{exist1}. Since we have obtained that the solution space $\mcH_0^1(\Omega)$ is compactly embeded in $L^2(\Omega)$ in Lemma \ref{compactembed}, we can easily prove the existence of the solution for \eqref{1.1}.
	First, let us give the definition of weak solutions.
	\begin{definition}
		We say a function
		\begin{equation*}
			z \in L^{2}(0,T;\mathcal{H}_0^{1}(\Omega)), \ \mbox{with} \  \frac{d}{dt}z \in L^{2}(0,T;\mathcal{H}^{-1}(\Omega)),
		\end{equation*}
		is a {\it weak solution} of the degenerate parabolic problem \eqref{1.1} provided
		\begin{equation*}
			\left\langle \frac{d}{dt}z,v\right\rangle_{\left\langle \mathcal{H}^{-1}(\Omega),\mathcal{H}_0^{1}(\Omega)\right\rangle  } + B \left[ z,v;t\right] =(\chi_{\omega_0}g,v)
		\end{equation*}
		for each $v\in \mathcal{H}_0^{1}(\Omega)$ and a.e.  $0\le t \le T$ and
		\begin{equation*}
			z(0)=z_0,
		\end{equation*}
		where 
		\begin{equation*}
			B[z, v;t]=\int_{\Omega}|x|^\alpha A\nabla z\cdot \nabla v  dx.  
		\end{equation*}
	\end{definition}
	Since $\mcH_0^1(\Omega)$ is compactly embedded in $L^2(\Omega)$ by Theorem \ref{compactembed}, we have $\mathcal{A}^{-1}$ is a compact and symmetric operator.  By Theorem 7 in D.6 in \cite{Evans}, there exists a countable orthogonal basis of  $\mathcal{H}_0^1(\Omega)$ consisting of eigenvectors of $\mathcal{A}^{-1}$.  Let $\left\lbrace w_k \right\rbrace _{k=1}^{\infty}$ be such a basis, forms an orthonormal basis in both $\mathcal{H}_0^1(\Omega)$ and $L^2(\Omega)$. Following a similar argument to 7.1.2 in \cite{Evans},
	for a fixed positive integer $m$, we can find a function  $ z_{m}:[0, T] \to \mcH_0^1(\Omega) $ of the form
	\begin{equation}\label{a5}
		z_{m}(t):=\sum_{k=1}^{m}d_{m}^{k}(t)w_{k},
	\end{equation}
	that satisfies the following equations: 
	\begin{equation}\label{a7}
		\left(  \frac{d}{dt}z_{m}, w_{k} \right)  +B[z_{m}, w_{k};t] =(\chi_{\omega_0}g, w_{k}),   \quad k=1, \cdots, m,
	\end{equation}
	for $  t\in (0, T)$ a.e., here, $\left( \cdot , \cdot \right) $ denotes the inner product in $L^2(\Om)$, and
	\begin{equation*}\label{a8}
		B[z_{m}, w_{k};t]=\int_{\Omega}|x|^\alpha A\nabla z_{m}\cdot \nabla  w_{k} dx, 
	\end{equation*}
	and $ d_{m}^{k}(t)\ (k=1, . . . , m) $ are the desired coefficients such that
	\begin{equation}\label{a6}
		d_{m}^{k}(0)=(z_{0},  w_{k}),  \quad k=1, \cdots,  m. 
	\end{equation} 
	
	Thus we seek a function  $ z_{m} $ of the form \eqref{a5} that satisfies \eqref{a7} of problem \eqref{1.1} onto the finite dimensional subspace spanned by  $ \{w_{k}\}^{m}_{k=1}$. 
	Next, using the classic Galerkin method in 7.1.2 in \cite{Evans}, we can easily prove the following results. (The details of the proof of Theorem \ref{b1}, Theorem \ref{b2}, and Theorem \ref{b3} are in the Appendix.)

	\begin{theorem}[Construction of approximate solutions]\label{b1}
		For each integer $m\in\mathbb{N}$,   there exists a unique function  $ z_{m}$ of the form \eqref{a5} satisfying \eqref{a7} and \eqref{a6}. 
	\end{theorem}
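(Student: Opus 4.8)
The plan is to reduce the infinite-dimensional problem to a finite-dimensional linear system of ordinary differential equations for the coefficient vector, and then invoke standard ODE existence theory. First I would substitute the ansatz \eqref{a5} into \eqref{a7}. Using that $\{w_k\}_{k=1}^\infty$ is orthonormal in $L^2(\Om)$, the time-derivative term collapses: one has $\left(\frac{d}{dt}z_m, w_k\right) = \sum_{l=1}^m \dot d_m^l(t)(w_l,w_k) = \dot d_m^k(t)$. For the bilinear part, write $B[z_m,w_k;t] = \sum_{l=1}^m e^{kl}\, d_m^l(t)$, where $e^{kl} := B[w_l,w_k;t] = \int_\Om |x|^\alpha A \nabla w_l \cdot \nabla w_k\, dx$; note that because neither $A$ nor $|x|^\alpha$ depends on $t$, the matrix $E = (e^{kl})_{1\le k,l \le m}$ is a constant $m\times m$ matrix.

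With these substitutions, \eqref{a7} becomes the linear system
\begin{equation*}
\dot d_m^k(t) + \sum_{l=1}^m e^{kl} d_m^l(t) = f^k(t), \qquad k = 1,\dots,m,
\end{equation*}
where $f^k(t) := (\chi_{\omega_0}g(t), w_k)$, subject to the initial data $d_m^k(0) = (z_0, w_k)$ prescribed in \eqref{a6}. Writing $d_m(t) = (d_m^1(t),\dots,d_m^m(t))^T$ and $f(t) = (f^1(t),\dots,f^m(t))^T$, this is the vector-valued Cauchy problem $\dot d_m = -E d_m + f$ with $d_m(0) = d_0$, $d_0^k = (z_0,w_k)$.

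The key observation making the ODE theory applicable is that the forcing $f$ lies in $L^2(0,T)^m$: indeed, by Cauchy--Schwarz, $|f^k(t)| \le \|\chi_{\omega_0}g(t)\|_{L^2(\Om)}\|w_k\|_{L^2(\Om)} = \|\chi_{\omega_0}g(t)\|_{L^2(\Om)}$, and the right-hand side belongs to $L^2(0,T)$ since $g\in L^2(Q)$; in particular $f\in L^1(0,T)^m$. Since $E$ is constant, the system is linear with an $L^1$ forcing term, so Carath\'eodory's existence--uniqueness theorem (equivalently, the variation-of-constants formula) yields a unique absolutely continuous solution, given explicitly by
\begin{equation*}
d_m(t) = e^{-tE}d_0 + \int_0^t e^{-(t-s)E} f(s)\, ds.
\end{equation*}
This $d_m$ is absolutely continuous on $[0,T]$, hence differentiable a.e., so the identities \eqref{a7} hold for a.e. $t\in(0,T)$ and \eqref{a6} holds by construction. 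Defining $z_m$ by \eqref{a5} then gives the desired approximate solution, and its uniqueness follows from the uniqueness of $d_m$ together with the linear independence of $\{w_k\}_{k=1}^m$.

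Since the system is linear with constant coefficient matrix and integrable forcing, no genuine fixed-point iteration is needed and the argument is essentially routine; the only point requiring care --- and the closest thing to an obstacle --- is that the data $g$ is merely square-integrable in time, so one must appeal to the Carath\'eodory framework rather than the classical Picard--Lindel\"of theorem in order to obtain an absolutely continuous (rather than $C^1$) solution satisfying \eqref{a7} in the almost-everywhere sense.
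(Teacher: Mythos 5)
Your proposal is correct and follows essentially the same route as the paper's own proof: projecting onto $\mathrm{span}\{w_k\}_{k=1}^m$ to obtain the linear ODE system $\dot d_m^k + \sum_{l=1}^m e^{kl} d_m^l = g^k$ with initial data \eqref{a6}, then invoking standard (Carath\'eodory-type) existence and uniqueness theory for linear systems with integrable forcing to get a unique absolutely continuous solution. Your added observations --- that the coefficient matrix is constant in time, the variation-of-constants formula, and the explicit justification that $L^2$ forcing only yields an a.e.\ solution --- are refinements of, not departures from, the paper's argument.
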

	
	%

	\begin{theorem}[Energy estimates]\label{b2}
		There exists a constant C,  depending only on  $ \Omega $,  and the coefficients of $\mathcal{A}$,  such that
		\begin{equation*}\label{a12}
			\begin{split}
				&\max\limits_{t\in(0, T)} \|z_{m}(t) \|_{L^{2}(\Omega)}+\|z_{m} \|_{L^{2}(0, T;\mcH_0^1(\Omega))}+\left\|\frac{d}{dt}z_{m}\right\|_{L^{2}(0, T;\mathcal{H}^{-1}(\Omega))} \\
				&\qquad\le C \left(\|\chi_{\omega_0}g \|_{L^{2}(0, T;L^{2}(\Omega))}+\|z_{0} \|_{L^{2}(\Omega)}\right),           
			\end{split}
		\end{equation*}
		where $\mathcal{H}^{-1}(\Omega)$ is the dual space of $\mathcal{H}_0^1(\Omega)$. 
	\end{theorem}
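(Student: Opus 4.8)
The plan is to follow the classical Galerkin energy-estimate scheme (as in Section 7.1.2 of \cite{Evans}), testing the finite-dimensional system \eqref{a7} against its own solution $z_m$ and exploiting the fact that the bilinear form $B$ is, by the very definition of the $\mcH_0^1(\Om)$ inner product, exactly the square of the $\mcH_0^1(\Om)$-norm. Concretely, I would multiply \eqref{a7} by $d_m^k(t)$, sum over $k=1,\dots,m$, and use \eqref{a5} to get
\begin{equation*}
    \tfrac12\tfrac{d}{dt}\|z_m\|_{L^2(\Om)}^2 + \|z_m\|_{\mcH_0^1(\Om)}^2 = (\chi_{\omega_0}g, z_m),
\end{equation*}
since $B[z_m,z_m;t]=\|z_m\|_{\mcH_0^1(\Om)}^2$; thus no separate coercivity argument is needed. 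Bounding the right-hand side by Cauchy--Schwarz and Young's inequality produces the differential inequality $\tfrac{d}{dt}\|z_m\|_{L^2(\Om)}^2 + 2\|z_m\|_{\mcH_0^1(\Om)}^2 \le \|z_m\|_{L^2(\Om)}^2 + \|\chi_{\omega_0}g\|_{L^2(\Om)}^2$.

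Next I would apply Gronwall's inequality after discarding the nonnegative gradient term; together with $\|z_m(0)\|_{L^2(\Om)}\le\|z_0\|_{L^2(\Om)}$ (because $z_m(0)$ is the $L^2$-orthogonal projection of $z_0$ by \eqref{a6}), this yields the bound on $\max_t\|z_m(t)\|_{L^2(\Om)}$. Re-integrating the same differential inequality over $(0,T)$, but now \emph{keeping} the gradient term and inserting the $L^2$-bound just obtained, gives the $L^2(0,T;\mcH_0^1(\Om))$ estimate for $z_m$.

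For the dual-norm bound on $\tfrac{d}{dt}z_m$, I would fix $v\in\mcH_0^1(\Om)$ with $\|v\|_{\mcH_0^1(\Om)}\le1$ and decompose $v=v^1+v^2$ with $v^1\in\mathrm{span}\{w_k\}_{k=1}^m$; orthogonality of $\{w_k\}$ in $\mcH_0^1(\Om)$ gives $\|v^1\|_{\mcH_0^1(\Om)}\le1$. Since $\tfrac{d}{dt}z_m$ lies in that span, \eqref{a7} gives $\langle\tfrac{d}{dt}z_m,v\rangle=(\chi_{\omega_0}g,v^1)-B[z_m,v^1;t]$, whence $|\langle\tfrac{d}{dt}z_m,v\rangle|\le\|\chi_{\omega_0}g\|_{L^2(\Om)}\|v^1\|_{L^2(\Om)}+\|z_m\|_{\mcH_0^1(\Om)}$. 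Taking the supremum over such $v$, squaring, integrating in time and substituting the $L^2(0,T;\mcH_0^1(\Om))$ bound then closes the estimate.

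I expect the only delicate point to be this last $\mathcal{H}^{-1}(\Om)$-estimate: one must use the orthogonal-projection trick so that the test function is effectively replaced by its Galerkin projection $v^1$, and one must invoke the continuous embedding $\mcH_0^1(\Om)\hookrightarrow L^2(\Om)$ furnished by Lemma \ref{L2.2} to control $\|v^1\|_{L^2(\Om)}$ by $\|v^1\|_{\mcH_0^1(\Om)}\le1$; its embedding constant depends only on $N$ and $\alpha$ and is therefore absorbable into the asserted constant $C$.
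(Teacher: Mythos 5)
Your proposal is correct and follows essentially the same route as the paper: testing \eqref{a7} with $d_m^k(t)$, using $B[z_m,z_m;t]=\|z_m\|_{\mcH_0^1(\Om)}^2$ together with Young's inequality and Gronwall to get the $L^\infty(L^2)$ and $L^2(\mcH_0^1)$ bounds, and then the projection decomposition $v=v^1+v^2$ with $\|v^1\|_{\mcH_0^1(\Om)}\le 1$ for the $\mathcal{H}^{-1}(\Om)$ estimate. Your remark that the embedding constant from Lemma \ref{L2.2} is needed to control $\|v^1\|_{L^2(\Om)}$ is a point the paper absorbs silently into $C$, so if anything your write-up is slightly more careful at that step.
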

	%
	%
	%
	

	\begin{theorem}[Existense of the weak solution]\label{b3}
		There exists a  weak solution of \eqref{1.1} and the weak solution is unique.  
	\end{theorem}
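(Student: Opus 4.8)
The plan is to follow the standard Galerkin limit procedure (cf.\ Evans, Section 7.1.2), exploiting the fact that \eqref{1.1} is linear. First I would invoke the uniform energy estimates of Theorem \ref{b2} to conclude that $\{z_m\}$ is bounded in $L^2(0,T;\mcH_0^1(\Om))$ and that $\{\frac{d}{dt}z_m\}$ is bounded in $L^2(0,T;\mathcal{H}^{-1}(\Om))$, both uniformly in $m$. By reflexivity and the Banach--Alaoglu theorem, I extract a subsequence (not relabeled) and a limit $z$ with
\begin{equation*}
	z_m \rightharpoonup z \ \text{ in } L^2(0,T;\mcH_0^1(\Om)), \qquad \tfrac{d}{dt}z_m \rightharpoonup \tfrac{d}{dt}z \ \text{ in } L^2(0,T;\mathcal{H}^{-1}(\Om)).
\end{equation*}

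Next I pass to the limit in the Galerkin relations \eqref{a7}. I fix an integer $N$ and take a test function of the form $v(t)=\sum_{k=1}^N d^k(t)w_k$ with $d^k\in C^1([0,T])$; for every $m\ge N$ I multiply \eqref{a7} by $d^k(t)$, sum over $k=1,\dots,N$, and integrate over $(0,T)$. Since the duality pairing $\langle \frac{d}{dt}z_m,v\rangle$ and the bilinear form $B[z_m,v;t]=\int_\Om|x|^\alpha A\nabla z_m\cdot\nabla v\,dx$ are linear and continuous in $z_m$, the weak convergences above let me pass to the limit term by term. This yields the integrated weak formulation for $z$ tested against every such $v$, and because functions of this form are dense in $L^2(0,T;\mcH_0^1(\Om))$, the identity extends to all admissible test functions. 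Thus $z$ satisfies the required weak formulation in the distributional-in-time sense.

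To recover the initial datum I would use that $z\in L^2(0,T;\mcH_0^1(\Om))$ together with $\frac{d}{dt}z\in L^2(0,T;\mathcal{H}^{-1}(\Om))$, which by the Lions--Aubin continuity-in-time embedding gives $z\in C^0([0,T];L^2(\Om))$, so $z(0)$ is well defined. Integrating the weak formulation against a $v$ with $v(T)=0$, comparing with the corresponding integration by parts of \eqref{a7}, and using $d_m^k(0)\to (z_0,w_k)$ from \eqref{a6}, I identify $z(0)=z_0$. For uniqueness, given two weak solutions $z_1,z_2$ with identical data I set $w=z_1-z_2$, a weak solution with $g=0$ and $w(0)=0$. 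Choosing $v=w(t)$ (admissible since $w(t)\in\mcH_0^1(\Om)$ for a.e.\ $t$) gives
\begin{equation*}
	\tfrac12\tfrac{d}{dt}\|w(t)\|_{L^2(\Om)}^2 + \int_\Om|x|^\alpha A\nabla w\cdot\nabla w\,dx = 0,
\end{equation*}
and the uniform ellipticity of $A$ makes the second term nonnegative, whence $\frac{d}{dt}\|w(t)\|_{L^2(\Om)}^2\le 0$; with $\|w(0)\|_{L^2(\Om)}=0$ this forces $w\equiv 0$.

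The only real subtlety I anticipate is the rigorous recovery of the initial condition, which hinges on the continuity-in-time embedding quoted above. The limit passage itself is routine precisely because the equation is linear, so weak convergence alone suffices and the compact embedding of Theorem \ref{compactembed}---indispensable for constructing the spectral basis $\{w_k\}$---need not be re-invoked at this stage.
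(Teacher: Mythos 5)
Your proposal is correct and follows essentially the same Galerkin-limit route as the paper: the same energy-estimate compactness from Theorem \ref{b2}, the same passage to the limit against test functions of the form $v(t)=\sum_{k=1}^N d^k(t)w_k$ followed by a density argument, and the same integration-by-parts comparison (together with the $C^0([0,T];L^2(\Omega))$ embedding) to recover $z(0)=z_0$; your only gloss is asserting $\frac{d}{dt}z_m \rightharpoonup \frac{d}{dt}z$ directly, a limit identification the paper isolates and proves as Lemma \ref{b9}. Note also that your explicit energy-method uniqueness argument is an addition rather than a deviation: the paper's written proof of Theorem \ref{b3} establishes existence only, even though the statement asserts uniqueness, so your version closes a step the paper leaves implicit.
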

	From Theorem \ref{b1}, Theorem \ref{b2}, and Theorem \ref{b3}, we can easily deduce the conclusion of Theorem \ref{exist1}.
	Moreover, we can achieve improved regularity by making difference with respect to  $x$ (see 6.3. (page 327) in \cite{Evans})
	and applying the trace theorem (see 5.5. (page 271) in \cite{Evans}). 
	\begin{theorem}[Improving Regularity] \label{regularity}
		For all solution $w$ of equation \eqref{3.1}, we have $w=w(t,x) \in H^1(0,T; \mathcal{H}_0^1(\Omega)) \cap  L^2(0,T; H^2(\Om^\epsilon))$. Moreover, $\nabla w\in [L^2(\Sigma)]^N$.
	\end{theorem}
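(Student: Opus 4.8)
The plan is to upgrade the weak solution produced by Theorem \ref{exist1} in three essentially independent steps: a global time-regularity estimate yielding $w\in H^1(0,T;\mcH_0^1(\Om))$, an elliptic bootstrap on the non-degenerate region yielding $w\in L^2(0,T;H^2(\Om^\epsilon))$, and a trace estimate yielding $\nabla w\in[L^2(\Si)]^N$. First I would observe that the time reversal $\tilde w(x,t)=w(x,T-t)$ turns the backward adjoint problem \eqref{3.1} into a forward problem of exactly the form \eqref{1.1}, with source $\tilde f(x,t)=f(x,T-t)$ and initial datum $w_T$; thus Theorem \ref{exist1} already supplies $w\in C^0([0,T];L^2(\Om))\cap L^2(0,T;\mcH_0^1(\Om))$, and only one extra time derivative and interior spatial smoothness remain to be gained.

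For the time regularity I would stay inside the Galerkin scheme introduced above, using that $\{w_k\}$ is the eigenbasis of the self-adjoint operator $\mathcal{A}$, which is available precisely because $\mathcal{A}^{-1}$ is compact by Theorem \ref{compactembed}. Writing $w_m=\sum_k d_m^k(t)w_k$ and differentiating the Galerkin identity \eqref{a7} in $t$, I would test the differentiated system in the $\mcH_0^1$ inner product, using $\langle w_k,w_j\rangle_{\mcH_0^1(\Om)}=\lambda_k\delta_{kj}$ (equivalently, exploit the explicit spectral solution of the decoupled scalar ODEs $\dot d_m^k=-\lambda_k d_m^k-\tilde f_k$). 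This is the degenerate analogue of the improved-regularity argument of \cite{Evans}, and produces a bound on $\|\partial_t w_m\|_{L^2(0,T;\mcH_0^1(\Om))}$ that is uniform in $m$; passing to the limit gives $\partial_t w\in L^2(0,T;\mcH_0^1(\Om))$ and hence $w\in H^1(0,T;\mcH_0^1(\Om))$.

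Next, for the spatial $H^2$ gain I would freeze time and treat, for a.e. $t$, the elliptic problem $-\Div(|x|^\alpha A\nabla w(t))=f(t)-\partial_t w(t)=:F(t)$, where $F\in L^2(Q)$ by the previous step. On the slightly larger set $\Om^{\epsilon/2}=\{\,x\in\Om\mid|x|>\epsilon/2\,\}$ the weight obeys $|x|^\alpha\ge(\epsilon/2)^\alpha>0$, so $|x|^\alpha A$ is uniformly elliptic with $C^1$ coefficients and the degeneracy at $x=0$ is entirely excluded; moreover a full neighborhood of $\Gamma$ in $\Om$ lies in this non-degenerate zone, since $d(0,\Gamma)>\epsilon_0>9\epsilon$ and $\Gamma$ is globally $C^2$. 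The method of difference quotients in $x$ (see \cite{Evans}), applied both in the interior of $\Om^{\epsilon/2}$ and up to $\Gamma$, then yields $w(t)\in H^2(\Om^\epsilon)$ together with an estimate $\|w(t)\|_{H^2(\Om^\epsilon)}\le C\left(\|F(t)\|_{L^2(\Om)}+\|w(t)\|_{\mcH_0^1(\Om)}\right)$ whose constant depends only on $(\epsilon/2)^\alpha$, $\beta$ and $\|A\|_{C^1(\overline\Om)}$, and is therefore uniform in $t$. Squaring and integrating over $(0,T)$ gives $w\in L^2(0,T;H^2(\Om^\epsilon))$.

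Finally, for a.e. $t$ each first derivative $\partial_{x_i}w(t)$ belongs to $H^1(\Om^\epsilon)$, so the trace theorem (see \cite{Evans}) gives $\partial_{x_i}w(t)|_\Gamma\in L^2(\Gamma)$ with $\|\nabla w(t)\|_{L^2(\Gamma)}\le C\|w(t)\|_{H^2(\Om^\epsilon)}$; squaring and integrating in $t$ delivers $\nabla w\in[L^2(\Si)]^N$, exactly the trace regularity the later Carleman computations use to justify their boundary integrals. I expect the genuine difficulty to lie in the coupling between the degeneracy and the time-regularity step: the eigenvalue growth makes $\|\partial_t w\|_{\mcH_0^1(\Om)}$ delicate near the temporal endpoints $t=0,T$ when the datum is only $L^2$, so one must either strengthen the hypotheses on $(f,w_T)$ or lean on parabolic smoothing and the decay of the Carleman weight at those endpoints. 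Keeping every elliptic-regularity constant independent of $t$ and free of any hidden dependence on the distance to the degenerate point $x=0$, while confining the $H^2$ gain to $\Om^\epsilon$, is the technical heart of the argument.
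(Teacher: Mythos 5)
Your proposal follows essentially the same route as the paper, whose entire ``proof'' of this theorem is the one-sentence pointer preceding it: difference quotients with respect to $x$ (see \S 6.3 in \cite{Evans}) for the $H^2$ gain, and the trace theorem (see \S 5.5 in \cite{Evans}) for $\nabla w\in[L^2(\Si)]^N$. Your elliptic bootstrap --- freezing $t$, observing that on $\Om^{\epsilon/2}$ the coefficient $|x|^\alpha A$ is uniformly elliptic with $C^1$ entries and that a full neighborhood of $\Gamma$ lies in the non-degenerate zone (since $\epsilon<\tfrac19\epsilon_0<d(0,\Gamma)$), then applying interior and up-to-the-boundary difference quotients with constants uniform in $t$ --- is exactly the intended argument, spelled out; likewise the trace step. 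What you add beyond the paper is the Galerkin/spectral treatment of the $H^1(0,T;\mcH_0^1(\Om))$ component, which the paper never proves at all.

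On that component, the difficulty you flag at the temporal endpoint is genuine, not cosmetic: with only $w_T\in L^2(\Om)$ and $f\in L^2(Q)$, the claim $\partial_t w\in L^2(0,T;\mcH_0^1(\Om))$ fails in general near $t=T$ (already for the nondegenerate heat equation; spectrally your decoupled ODEs give $\|\partial_t w\|^2_{L^2(0,T;\mcH_0^1)}\sim\sum_k\lambda_k^2|(w_T)_k|^2$, divergent for generic $L^2$ data), and even $\partial_t w\in L^2(Q)$ --- which your elliptic step consumes through $F=f-\partial_t w$ --- requires roughly $w_T\in\mcH_0^1(\Om)$; the same endpoint blow-up defeats $w\in L^2(0,T;H^2(\Om^\epsilon))$ for raw $L^2$ terminal data. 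The paper resolves this exactly as you suggest: in Section 4 it declares that ``using standard arguments, we can assume that $w$ possesses sufficient regularity,'' i.e., the regularity theorem is really invoked for regular data and the Carleman estimates extend by density, aided by the vanishing of the weight $\theta(t)=[t(T-t)]^{-4}$-based functions at $t=0,T$. So your write-up is a faithful --- indeed more honest --- rendering of the paper's argument; to make it a complete proof of the theorem as literally stated, you should either restrict the hypotheses on $(f,w_T)$ or insert that density reduction explicitly, since both your time-regularity step and the paper's bare assertion are otherwise false for general $L^2$ data.
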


	\section{Carleman esitimates}
	In order to obtain controllability results, we need to make some Carleman estimates. We will divide it into two cases and establish two different Carleman estimates accordingly.
	\subsection{The case of \texorpdfstring{$0 \in \omega_0$}{}}
	
	\hspace*{\fill}\\
	
	In this Subsection, we mainly introduce the Carleman estimation in the case of $0 \in \omega_0$. Before proving Theorem \ref{Carleman1}, we first provide a lemma about the weight function. The setting of the weight function is important, as it uses the weight function to eliminate the influence of interior degeneracy. Here, we assume $\Om_{6\epsilon}\subset \om_0$.		
	\begin{lemma}\label{ETA}
		There is a weight function that satisfies the following conditions:
		\begin{itemize}
			\item [($i$)] 
			\begin{equation*}
				\eta(x) :=
				\begin{cases}
					=0, & x \in \Omega_\epsilon\cup \partial\Om,\\
					>0, & x \in \Omega^\epsilon,
				\end{cases}
			\end{equation*}
			\item [($ii$)] 
			\begin{equation*}
				|x|^\alpha A \nabla \eta \cdot \nabla \eta \ge C >0 \ \mbox{in} \ \Om\backslash\omega_{0}, \quad  \nabla \eta=0 \ \mbox{on} \ \overline{\Omega_\epsilon}.
			\end{equation*}
			\item [($iii$)] $|x|^\alpha A \nabla \eta$, $|x|^\alpha A \nabla \eta \cdot \nabla \eta$, $\nabla(|x|^\alpha A \nabla \eta \cdot \nabla \eta)$, $D(|x|^\alpha A \nabla \eta)$,  $\Div(|x|^\alpha A \nabla \eta)$, $\nabla(|x|^\alpha A \nabla \eta\cdot \nabla \eta)$, $\nabla(\Div(|x|^\alpha A \nabla \eta))$, $\left( |x|^\alpha A \nabla  \eta\right)_i \frac{\partial(|x|^\alpha A)}{\partial x_i} \in C^0(\overline{{\Omega}})$.
		\end{itemize}
		Moreover, we have
		$\frac{\partial \eta}{\partial {n}} <0 \mbox{ on } \Gamma$, 
		where ${n}$ is the unit outer normal vector on $\Gamma=\partial\Om$.
	\end{lemma}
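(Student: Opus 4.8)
The plan is to construct $\eta$ explicitly by modifying a standard Fursikov--Imanuvilov type weight function so that it vanishes together with its gradient on the degenerate core $\overline{\Omega_\epsilon}$. First I would fix an auxiliary function $\psi \in C^2(\overline{\Omega})$ whose existence is classical: since $\Omega\setminus\omega$ is a manifold with boundary, one can find $\psi$ with $\psi>0$ in $\Omega$, $\psi=0$ on $\partial\Omega$, $\frac{\partial\psi}{\partial n}<0$ on $\Gamma$, and $|\nabla\psi|>0$ on $\overline{\Omega\setminus\omega_0}$ (this is exactly Lemma 1.1 in \cite{fursikov1996controllability}, applied with the enlarged control set $\omega\Subset\omega_0$). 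The matrix $A$ being uniformly elliptic and $C^1$, the quantity $|x|^\alpha A\nabla\psi\cdot\nabla\psi$ is bounded below by $\beta\epsilon^\alpha|\nabla\psi|^2$ on $\Omega\setminus\omega_0\subset\Omega^\epsilon$, which gives a uniform positive lower bound there and yields condition $(ii)$ for $\psi$.

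Next I would introduce a radial cutoff $\theta\in C^\infty(\mathbb{R}^N)$ depending only on $|x|$, with $\theta\equiv 0$ on $\Omega_\epsilon$, $\theta\equiv 1$ on $\Omega^{2\epsilon}$, and $0\le\theta\le 1$, and set $\eta:=\theta\,\psi$. The recipe for $\theta$ mirrors the cutoff already used in Lemma~\ref{dense}. By construction $\eta=0$ on $\Omega_\epsilon\cup\partial\Omega$ and $\eta=\theta\psi>0$ on $\Omega^{2\epsilon}$; on the annulus $\epsilon<|x|<2\epsilon$ one has $\eta\ge 0$, and since $\Omega_{2\epsilon}\cap\overline\omega=\emptyset$ guarantees this transition annulus is contained in $\omega_0$, the strict positivity required in $(i)$ on $\Omega^\epsilon$ can be arranged by choosing $\theta>0$ for $|x|>\epsilon$ while keeping all radial derivatives of $\theta$ supported where $\psi$ is bounded away from zero. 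Because $\theta$ and all its derivatives vanish on $\overline{\Omega_\epsilon}$, we get $\nabla\eta=\psi\nabla\theta+\theta\nabla\psi=0$ on $\overline{\Omega_\epsilon}$, giving the second half of $(ii)$; and on $\Omega\setminus\omega_0$, where $\theta\equiv 1$, we have $\eta=\psi$ and hence the lower bound $|x|^\alpha A\nabla\eta\cdot\nabla\eta\ge C>0$ carries over from the estimate on $\psi$. Finally $\frac{\partial\eta}{\partial n}=\theta\frac{\partial\psi}{\partial n}=\frac{\partial\psi}{\partial n}<0$ on $\Gamma$ since $\theta\equiv 1$ near $\partial\Omega$.

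The regularity list in $(iii)$ is then a matter of bookkeeping: $\psi\in C^2$ and $\theta\in C^\infty$ make $\eta\in C^2(\overline\Omega)$, and because the weight $|x|^\alpha$ is $C^1$ away from the origin while every term in the list is multiplied by a factor of $\theta$ or $\nabla\theta$ that kills a neighborhood of the origin, each listed combination extends continuously across $x=0$ by the value $0$. Concretely, objects like $|x|^\alpha A\nabla\eta$, $\Div(|x|^\alpha A\nabla\eta)$, $\nabla(\Div(|x|^\alpha A\nabla\eta))$ and $\left(|x|^\alpha A\nabla\eta\right)_i\frac{\partial(|x|^\alpha A)}{\partial x_i}$ all involve only derivatives of $\eta$ up to second order and the smooth (on $\Omega^\epsilon$) coefficient $|x|^\alpha A$; since $\eta$ is flat on $\overline{\Omega_\epsilon}$, these expressions are identically $0$ there and agree with the smooth expressions on $\Omega^\epsilon$, so continuity on all of $\overline\Omega$ follows.

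I expect the main obstacle to be verifying the strict positivity $\eta>0$ on all of $\Omega^\epsilon$ simultaneously with $\nabla\eta=0$ on $\overline{\Omega_\epsilon}$ while keeping the lower bound $(ii)$ intact on the annulus $\epsilon<|x|<2\epsilon$, since there $\eta=\theta\psi$ is a product whose gradient $\psi\nabla\theta+\theta\nabla\psi$ need not be bounded away from zero. This is precisely why the hypothesis $\Omega_{2\epsilon}\cap\overline\omega=\emptyset$ (equivalently $\Omega_{6\epsilon}\subset\omega_0$) is imposed: it places the problematic transition annulus \emph{inside} $\omega_0$, where $(ii)$ is not required to hold, so the lower bound on $|x|^\alpha A\nabla\eta\cdot\nabla\eta$ is only needed on $\Omega\setminus\omega_0$, where $\theta\equiv 1$ and $\eta=\psi$. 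Reconciling the geometric constraints on the supports of $\theta$, $\nabla\theta$ and $\omega_0$ is therefore the delicate point, and the rest reduces to the classical construction together with routine continuity checks.
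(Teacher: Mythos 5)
Your construction is essentially the paper's own proof: the paper takes a Morse function $\zeta\in C^3(\overline{\Omega})$ with $\zeta=0$ on $\partial\Omega$, $\zeta>0$ in $\Omega$ and $|\nabla\zeta|\ge C>0$ on $\Omega\setminus\omega_0$, multiplies it by a smooth cutoff $h$ with $h=0$ on $\Omega_\epsilon$, $h=1$ on $\Omega\setminus\Omega_{2\epsilon}$ and $0<h<1$ in between, and sets $\eta=h\zeta$, relying exactly as you do on the hypothesis $\Omega_{6\epsilon}\subset\omega_0$ to place the transition annulus inside the control region where condition ($ii$) is not required. The one slip is regularity: you take the base weight $\psi\in C^2(\overline{\Omega})$, but item ($iii$) demands $\nabla(\Div(|x|^\alpha A\nabla\eta))\in C^0(\overline{\Omega})$, which involves third derivatives of $\eta$ on $\overline{\Omega^\epsilon}$, so $C^2$ is not enough and your closing claim that $\eta\in C^2(\overline{\Omega})$ suffices for the bookkeeping in ($iii$) does not go through as written; this is precisely why the paper insists on $\zeta\in C^3(\overline{\Omega})$, and the Fursikov--Imanuvilov construction does yield this extra smoothness, so upgrading $\psi$ to $C^3$ repairs the argument and makes it coincide with the paper's.
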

	\begin{proof}
		Consider a Morse function $\zeta \in C^3(\overline{\Omega})$ satisfying $\zeta = 0$ on $\partial \Om$, $\zeta>0$ in $\Om$,  and $\nabla \zeta \ge C>0$ in $\Om\backslash\omega_{0}$. Define
		\begin{equation*}
			h(x) :=
			\begin{cases}
				h=0, & x \in \Omega_\epsilon,\\
				h=1, & x \in \Om\backslash\Omega_{2\epsilon},\\
				0<h<1, & x \in \Omega_{2\epsilon}\backslash\Omega_\epsilon.
			\end{cases}\
			\mbox{ and } h \in C^\infty(\Om).
		\end{equation*}
		Let $\eta=h\cdot \zeta$, then $\eta$ satisfies item ($i$) and item ($ii$) since $\Omega_{6\epsilon}\subset \omega_{0}$.
		
		Finally, we have $\frac{\partial \eta}{\partial {n}} <0 \mbox{ on } \Gamma$ since $\eta>0$ in $\Om$ and $\eta=0$ on $\Gamma$.
	\end{proof}
	
	Now, for any $\lambda>\lambda_0$ and $s>0$, let us introduce the following functions and
	constants:
	\begin{equation*}
		\begin{split}
			& \theta(t):=[t(T-t)]^{-4}, \quad \xi(x, t):=\theta(t) e^{ \lambda(8|\eta|_\infty+\eta (x))}, \quad \sigma(x, t):=\theta(t) e^{10 \lambda|\eta|_\infty}-\xi(x, t).
		\end{split}
	\end{equation*}
	In what follows, $C>0$ represents a generic constant, and $w$ denotes a solution of equation \eqref{3.1}. We can assume, using standard arguments, that $w$ possesses sufficient regularity. Specifically, we have $w \in H^1(0,T; \mathcal{H}_0^1(\Omega))\cap L^2(0,T; H^2(\Om^\epsilon)$.
	
	Next, let us establish the Carleman estimate for case $0 \in \omega_0$.
	
	{\bf Proof of Theorem \ref{Carleman1}:}
	Consider $s>s_0>0$ and introduce
	\begin{equation*}
		u=e^{-s\sigma} w.  
	\end{equation*}
	Then, the following properties hold for $u$:
	\begin{itemize}
		\item [($i$)] $u=\frac{\partial u}{\partial x_i}=0$ at $t=0$ and $t=T$;
		\item [($ii$)] $u=0$  on $\Sigma$;
		\item [($iii$)] Let 
		\begin{equation*}
			\begin{split}
				P_1 u
				&:=u_t+s \Div(u |x|^\alpha A \nabla \sigma)+s |x|^\alpha A\nabla \sigma \cdot \nabla u,\\
				P_2 u
				&:=\Div(|x|^\alpha A \nabla u)+s^2 u |x|^\alpha A \nabla \sigma \cdot \nabla \sigma+s \sigma_t u.
			\end{split}
		\end{equation*}
		Then 
		\begin{equation*}
			P_1 u+P_2 u=e^{-s \sigma} f.
		\end{equation*}
	\end{itemize}
	
	From ($iii$), it follows that
	\begin{equation}\label{PP}
		\left\|P_1 u\right\|^2+\left\|P_2 u\right\|^2+2\left(P_1 u, P_2 u\right)=\left\|e^{-s \sigma} f\right\|^2.
	\end{equation}
	
	Let us define $\left(P_1 u, P_2 u\right)=I_1+\cdots+I_4$, where
	\begin{equation*}
		\begin{split}
			& I_1:=\left(\Div(|x|^\alpha A \nabla u)+s^2 u |x|^\alpha A \nabla \sigma \cdot  \nabla \sigma+s \sigma_t u, u_t\right), \\
			& I_2:=s^2\left(\sigma_t u, \Div(u |x|^\alpha A \nabla \sigma)+|x|^\alpha A \nabla \sigma \cdot  \nabla u\right), \\
			&I_3:=s^3\left( u |x|^\alpha A \nabla \sigma \cdot  \nabla \sigma, \Div(u |x|^\alpha A \nabla \sigma)+\nabla \sigma \cdot |x|^\alpha A \nabla u\right),\\
			& I_4:=s(\Div(|x|^\alpha A \nabla u), \Div(u |x|^\alpha A \nabla \sigma)+\nabla \sigma \cdot |x|^\alpha A \nabla u).
		\end{split}
	\end{equation*}
	By differentiating with respect to the time variable $t$, since $u_t \in \mathcal{H}_0^1(\Omega)$, by the definition of $u$ and properties ($i$) and ($ii$), we have
	\begin{equation}\label{I11}
		\begin{split}
			I_1=&\iint_Q u_t \Div (|x|^\alpha A\nabla u) +s^2 u |x|^\alpha A \nabla \sigma \cdot \nabla \sigma u_t + s\sigma_t u u_t dx  dt\\
			=&\int_{\Om} s^2  \left(|x|^\alpha A \nabla \sigma \cdot \nabla \sigma\right) \frac{1}{2}u^2 dx  \bigg|_0^T + \int_{\Om} s\sigma_t\cdot \frac{1}{2}u^2 dx  \bigg|_0^T+ \iint_\Sigma u_t |x|^\alpha A\nabla u \cdot n ds dt\\
			&-\iint_{Q}  |x|^\alpha A\nabla u \cdot \nabla u_t dx  dt -\frac{1}{2}\iint_{Q}\left(s\sigma_t + s^2 |x|^\alpha A \nabla \sigma \cdot \nabla \sigma\right)_t u^2 dx  dt\\
			=& -\frac{1}{2}\iint_{Q}(s\sigma_t + s^2 |x|^\alpha A \nabla \sigma \cdot \nabla \sigma)_t u^2 dx  dt.
		\end{split}
	\end{equation}
	Furthermore, we obtain
	\begin{equation}\label{I22}
		\begin{split}
			I_2  =& s^2  \iint_Q \sigma_t u (\Div(u |x|^\alpha A \nabla \sigma) + |x|^\alpha A\nabla u \cdot \nabla \sigma) dx  dt\\
			=&  s^2  \iint_Q \Div( |x|^\alpha A \nabla \sigma) \sigma_t u^2 dx  dt + s^2 \iint_\Sigma \sigma_t u^2 |x|^\alpha A \nabla \sigma  \cdot n dsdt\\
			&- s^2  \iint_Q \Div(\sigma_t |x|^\alpha A \nabla \sigma)u^2 dx dt \\
			=& -s^2\iint_Q  |x|^\alpha A \nabla \sigma \cdot\nabla \sigma_t u^2    dx  dt.
		\end{split}
	\end{equation}
	In the second equality, we can observe that $s^2 \iint_\Sigma \sigma_t u^2 |x|^\alpha A \nabla \sigma \cdot n dsdt=0$ since $u \in \mathcal{H}_0^1(\Omega)$.
	
	Similarly, in $I_3$ below, $s^3 \iint_\Sigma u^2 (|x|^\alpha A \nabla \sigma \cdot \nabla \sigma) |x|^\alpha A \nabla \sigma \cdot n ds dt=0$  due to $u \in \mathcal{H}_0^1(\Omega)$.
	In (\ref{I33}), we have
	\begin{equation}\label{I33}
		\begin{split}
			I_3  =&s^3 \iint_Q u |x|^\alpha A \nabla \sigma \cdot \nabla \sigma (\Div (u |x|^\alpha A \nabla \sigma) + |x|^\alpha A\nabla u \cdot \nabla \sigma ) dx  dt\\
			=&s^3 \iint_\Sigma u^2 (|x|^\alpha A \nabla \sigma \cdot \nabla \sigma) |x|^\alpha A \nabla \sigma \cdot n  ds dt -s^3 \iint_Q u |x|^\alpha A \nabla \sigma \cdot \nabla (u |x|^\alpha A \nabla \sigma \cdot \nabla \sigma  ) dx  dt\\
			&+ s^3 \iint_Q  (|x|^\alpha A \nabla \sigma \cdot \nabla u) ( |x|^\alpha A \nabla \sigma \cdot \nabla \sigma  ) u dx  dt\\
			=&-s^3\iint_Q  |x|^\alpha A\nabla \sigma \cdot \nabla ( |x|^\alpha A\nabla \sigma \cdot\nabla \sigma) u^2 dx  dt.
		\end{split}
	\end{equation}
	Similarly, for $I_4$, we have
	\begin{equation*}
		\begin{split}
			I_4= & s \iint_Q \Div(|x|^\alpha A \nabla u)\left(  \Div(u |x|^\alpha A \nabla \sigma)+ |x|^\alpha A \nabla u \cdot \nabla \sigma\right)  dx  dt \\
			= & s \iint_\Sigma u \Div(|x|^\alpha A \nabla \sigma )|x|^\alpha A \nabla u \cdot n dsdt - 2s \lambda \iint_\Sigma  \xi\left[|x|^\alpha A\nabla u\cdot n\right]^2\frac{\partial \eta}{\pt n} ds dt \\
			&-s \iint_Q |x|^\alpha A \nabla u \cdot  \nabla\left( u \Div(|x|^\alpha A \nabla \sigma)\right) dx  dt -2s \iint_{Q} |x|^\alpha A \nabla u\cdot \nabla \left( |x|^\alpha A \nabla u \cdot \nabla \sigma \right)  dx  dt,
		\end{split}
	\end{equation*}
	Since
	\begin{equation*}
		\begin{split}
			&-2s \iint_{Q} |x|^\alpha A \nabla u \cdot \nabla (|x|^\alpha A\nabla u \cdot \nabla \sigma) dx dt\\
			&\qquad=-2s \sum_{i=1}^{N} \iint_{Q} (|x|^\alpha A\nabla \sigma)_i |x|^\alpha A \nabla u \cdot \frac{\partial}{\partial x_i}(\nabla u) + (\nabla u)_i |x|^\alpha A \nabla u \cdot \nabla (|x|^\alpha A \nabla \sigma)_i dx dt,\\
		\end{split}
	\end{equation*}
	where
	\begin{equation*}
		\begin{split}
			&-2s \sum_{i=1}^{N} \iint_{Q} (|x|^\alpha A\nabla \sigma)_i |x|^\alpha A \nabla u \cdot \frac{\partial}{\partial x_i}(\nabla u) dx dt\\
			&\qquad=s \lambda \iint_\Sigma  \xi(|x|^\alpha An\cdot n)(|x|^\alpha A\nabla u\cdot \nabla u)\frac{\partial\eta}{\partial n}ds dt + s\iint_{Q} (|x|^\alpha A \nabla u \cdot \nabla u ) \Div(|x|^\alpha A\nabla \sigma) dx dt\\
			&\qquad\qquad+s \sum_{i=1}^{N} \iint_{Q} (|x|^\alpha A\nabla \sigma)_i \frac{\partial (|x|^\alpha A)}{\partial x_i} \nabla u \cdot \nabla u dx dt,
		\end{split}
	\end{equation*}
	Hence,  note that $\nabla u=\frac{\partial u}{\partial n}n$ on $\Sigma$, we have
	\begin{equation}\label{I44}
		\begin{split}
			I_4 = & -s \iint_Q u |x|^\alpha A \nabla u\cdot \nabla (\Div(|x|^\alpha A \nabla \sigma))dx  dt
			-2s  \sum_{i=1}^{N}\iint_{Q} (\nabla u)_i |x|^\alpha A \nabla u \cdot \nabla (|x|^\alpha A \nabla \sigma)_i dx dt  \\
			&
			+ s  \sum_{i=1}^{N}\iint_{Q} (|x|^\alpha A\nabla \sigma)_i \frac{\partial (|x|^\alpha A)}{\partial x_i} \nabla u \cdot \nabla u dx dt- s \lambda \iint_\Sigma  \xi\left( |x|^\alpha A\nabla u\cdot n\right) ^2\frac{\partial \eta}{\pt n} ds dt.
		\end{split}
	\end{equation}
	By combining \eqref{I11} to \eqref{I44},we can conclude that
	\begin{equation}\label{PPu}
		\begin{aligned}
			\left(P_1 u, P_2 u\right)
			=& -s^3 \iint_Q |x|^\alpha A \nabla \sigma\cdot \nabla(|x|^\alpha A \nabla \sigma \cdot \nabla \sigma)|u|^2 dx  dt\\
			&-2s  \sum_{i=1}^{N}\iint_{Q} (\nabla u)_i |x|^\alpha A \nabla u \cdot \nabla (|x|^\alpha A \nabla \sigma)_i dx dt
			-2s^2 \iint_Q |x|^\alpha A \nabla \sigma \cdot \nabla \sigma_t|u|^2 dx  dt\\
			&+ s \sum_{i=1}^{N} \iint_{Q} (|x|^\alpha A\nabla \sigma)_i \frac{\partial (|x|^\alpha A)}{\partial x_i} \nabla u \cdot \nabla u dx dt
			-s \iint_Q u |x|^\alpha A \nabla u \cdot\nabla(\Div(|x|^\alpha A \nabla \sigma)) dx dt\\
			&-\frac{s}{2} \iint_Q \sigma_{t t}|u|^2 dx  dt- s \lambda \iint_\Sigma  \xi\left[|x|^\alpha A\nabla u\cdot n\right]^2\frac{\partial \eta}{\pt n} ds dt.
		\end{aligned}
	\end{equation}
	Let us denote the seven integrals on the right-hand side of \eqref{PPu} by $T_1, \cdots, T_7$. Now, we will estimate each of them.
	Using the definitions of $\sigma$ and $\xi$ and the properties of $\eta$, we have 
	\begin{equation*}
		\nabla \sigma = - \lambda \xi \nabla \eta,\quad \nabla \xi = \lambda \xi \nabla \eta,\quad |x|^\alpha A \nabla \sigma \cdot \nabla \sigma = \lambda^2 \xi^2 |x|^\alpha A\nabla \eta \cdot \nabla \eta,
	\end{equation*}
	and 
	\begin{equation*}
		\nabla( |x|^\alpha A \nabla \sigma \cdot \nabla \sigma) = \lambda^2 \xi^2 \nabla (|x|^\alpha A\nabla \eta \cdot \nabla \eta) + 2\lambda^3 \xi^2 \nabla \eta \left(  |x|^\alpha A \nabla \eta \cdot \nabla \eta \right). 
	\end{equation*}
	Therefore, we can rewrite $T_1$ as follows: 
	\begin{equation}
		\begin{aligned}
			T_1
			&=-s^3 \iint_Q |x|^\alpha A \nabla \sigma \cdot \nabla(|x|^\alpha A \nabla \sigma \cdot \nabla \sigma)|u|^2 dx  dt\\
			&=2s^3 \lambda^4 \iint_Q \xi^3\left||x|^\alpha A \nabla \eta \cdot \nabla \eta \right|^2|u|^2 dx  dt
			+ s^3 \lambda^3 \iint_Q \xi^3 |x|^\alpha A\nabla \eta \cdot \nabla(|x|^\alpha A\nabla \eta \cdot \nabla\eta)|u|^2 dx  dt.
		\end{aligned}
	\end{equation}
	Since $|x|^\alpha A\nabla \eta \cdot\nabla(|x|^\alpha A\nabla \eta \cdot \nabla\eta)$ is bounded in $\overline{\Omega}$, we have
	$$
	s^3 \lambda^3 \int_{0}^{T}\int_{\omega_{0}} \xi^3 |x|^\alpha A\nabla \eta \cdot \nabla(|x|^\alpha A\nabla \eta \cdot \nabla\eta)|u|^2 dx  dt \ge -C s^3 \lambda^3 \int_{0}^{T}\int_{\omega_{0}} \xi^3 |u|^2 dx  dt,
	$$
	and $||x|^\alpha A \nabla \eta \cdot \nabla \eta|\ge C>0$ in $\overline{\Omega} \setminus \omega_{0}$. Thus, we have
	$$
	\begin{aligned}
		&s^3 \lambda^3 \int_{0}^{T}\int_{\Om\backslash \omega_{0}} \xi^3 |x|^\alpha A\nabla \eta \cdot \nabla(|x|^\alpha A\nabla \eta \cdot \nabla\eta)|u|^2 dx  dt\\
		&\qquad\ge -C s^3 \lambda^3 \int_{0}^{T}\int_{\Om\backslash \omega_{0}} \xi^3 |u|^2 dx  dt
		\ge
		-C s^3 \lambda^3 \int_{0}^{T}\int_{\Om\backslash \omega_{0}} \xi^3 \left||x|^\alpha A \nabla \eta \cdot \nabla \eta \right|^2 |u|^2 dx  dt\\
		&\qquad\ge
		-C s^3 \lambda^3 \iint_{Q} \xi^3 \left||x|^\alpha A \nabla \eta \cdot \nabla \eta \right|^2 |u|^2 dx  dt.
	\end{aligned}
	$$
	Hence, we can rewrite $T_1$ as (for $\lambda>0$ large enough)
	\begin{equation}\label{T1}
		\begin{aligned}
			T_1
			\geq C s^3 \lambda^4 \iint_Q \xi^3\left||x|^\alpha A \nabla \eta \cdot \nabla \eta \right|^2|u|^2 dx  dt-C s^3 \lambda^3\int_0^T \int_{\omega_{0}} \xi^3|u|^2 dx  dt.
		\end{aligned}
	\end{equation}
	Similarly, for $T_2$, we have
	\begin{equation}\label{T2}
		\begin{aligned}
			T_2 =&-2s \sum_{i=1}^{N} \iint_{Q} (\nabla u)_i |x|^\alpha A \nabla u \cdot \nabla (|x|^\alpha A \nabla \sigma)_i dx dt\\
			=&2s \lambda^2\iint_{Q}  \xi \left|  |x|^\alpha A\nabla \eta \cdot \nabla u \right| ^2 dx  dt  + 2s\lambda \iint_{Q}\xi D(|x|^\alpha A\nabla \eta) |x|^\alpha A\nabla u  \cdot \nabla u dx  dt\\
			\ge& Cs \lambda^2\iint_{Q}  \xi \left|  |x|^\alpha A\nabla \eta \cdot \nabla u \right| ^2 dx  dt  - C s\lambda \iint_{Q}\xi |x|^\alpha A\nabla u  \cdot \nabla u dx  dt.
		\end{aligned}
	\end{equation}
	Furthermore, for $T_3$, as can be seen from the definition of $\xi$, we have $\xi \xi_t \le \xi^3$, thus
	\begin{equation*}
		\begin{aligned}
			T_3 &=-2 s^2 \iint_{Q} |x|^\alpha A\nabla \sigma \cdot \nabla \sigma_t u^2 dx dt
			=-2 s^2 \lambda^2 \iint_{Q} \xi \xi_t \left||x|^\alpha A \nabla \eta \cdot \nabla \eta \right|  u^2 dx dt\\
			&\ge-2 s^2 \lambda^2 \iint_{Q} \xi^3 \left||x|^\alpha A \nabla \eta \cdot \nabla \eta \right|  u^2 dx dt.
		\end{aligned}
	\end{equation*}
	Similar to $T_1$, we have
	\begin{equation}\label{T3}
		\begin{aligned}
			T_3
			\ge &-C s^2 \lambda^2 \iint_{Q} \xi^3 \left||x|^\alpha A \nabla \eta \cdot \nabla \eta \right|^2  u^2 dx dt -C s^2 \lambda^2 \int_{0}^{T}\int_{\omega_{0}} \xi^3   u^2 dx dt,
		\end{aligned}
	\end{equation}
	and for $T_4$, we have
	\begin{equation}\label{T4}
		\begin{aligned}
			T_4 
			=&-s\lambda \sum_{i=1}^N\int_0^T\int_{\Om^\epsilon}\xi (|x|^\alpha A\nabla \eta)_i\frac{\partial(|x|^\alpha A)}{\partial x_i}\nabla u\cdot \nabla udxdt\geq -Cs\lambda \int_0^T\int_{\Om^\epsilon} \xi |\nabla u|^2\df x\df t\\
			\ge&  -Cs\lambda \int_0^T\int_{\Om^\epsilon} \xi |x|^\alpha A\nabla u\cdot \nabla udxdt \ge -C s \lambda \iint_Q  \xi |x|^\alpha A \nabla u \cdot \nabla u dx  dt.
		\end{aligned}
	\end{equation}
	Furthermore, by utilizing the definitions of $\sigma$ and $\xi$, we obtain
	\begin{equation}\label{T5}
		\begin{split}
			T_5  =&-s \iint_Q u |x|^\alpha A \nabla u \cdot \nabla(\Div(|x|^\alpha A \nabla \sigma)) dx  dt\\
			=&s \lambda^3 \iint_Q \xi u |x|^\alpha A \nabla u \cdot \nabla \eta \left( |x|^\alpha A \nabla \eta \cdot \nabla \eta\right)  dx  dt
			+ s \lambda^2 \iint_Q \xi u |x|^\alpha A \nabla u \cdot \nabla\left(|x|^\alpha A \nabla \eta \cdot \nabla \eta\right) dx  dt \\
			& +s \lambda^2 \iint_Q \xi u |x|^\alpha A \nabla u \cdot \nabla \eta \Div(|x|^\alpha A \nabla \eta)   dx  dt+s \lambda \iint_Q \xi u |x|^\alpha A \nabla u \cdot \nabla(\Div(|x|^\alpha A \nabla \eta)) dx  dt.
		\end{split}
	\end{equation}
	Let us denote $T_{51}, \cdots, T_{54}$ as the seven integrals on the right-hand side of \eqref{T5}. Then we have
	\begin{equation}\label{T51}
		\begin{split}
			T_{51}=&s \lambda^3 \iint_Q \xi u |x|^\alpha A \nabla u \cdot \nabla \eta \left( |x|^\alpha A \nabla \eta \cdot \nabla \eta\right)  dx  dt\\
			\geq&
			-Cs^2 \lambda^4 \iint_Q \xi\left||x|^\alpha A \nabla \eta \cdot \nabla \eta\right|^2|u|^2 dx  dt
			-C\lambda^2 \iint_Q \xi| |x|^\alpha A\nabla u \cdot\nabla \eta|^2 dx  dt,
		\end{split}
	\end{equation}
	and (by the definition of $\eta$)
	\begin{equation}\label{T52}
		\begin{split}
			T_{52}=&s \lambda^2 \iint_Q \xi u |x|^\alpha A \nabla u \cdot \nabla\left(|x|^\alpha A \nabla \eta \cdot \nabla \eta\right) dx  dt\\
			\ge&-Cs^2 \lambda^3\iint_{Q} \xi \left| |x|^\alpha A \nabla \eta \cdot \nabla \eta \right| u^2 dx  dt -Cs^2 \lambda^3\int_0^T \int_{\omega_{0}} \xi   u^2 dx  dt\\
			&- C\lambda \iint_Q  \xi |x|^\alpha A \nabla u \cdot \nabla u dx  dt.
		\end{split}
	\end{equation}
	Then we have
	\begin{equation}\label{T53}
		\begin{split}
			T_{53}=&s \lambda^2 \iint_Q \xi u |x|^\alpha A \nabla u \cdot \nabla \eta  \Div(|x|^\alpha A \nabla \eta) dx  dt\\
			\ge& -Cs^2 \lambda^3\iint_Q \xi \left| |x|^\alpha A \nabla \eta \cdot \nabla \eta \right|^2  u^2  dx  dt -Cs^2 \lambda^3\int_0^T \int_{\omega_{0}} \xi u^2 dx  dt\\
			&- C\lambda \iint_Q \xi |x|^\alpha A \nabla u \cdot \nabla u dx  dt,
		\end{split}
	\end{equation}
	and
	\begin{equation}\label{T54}
		\begin{split}
			T_{54}=&s \lambda \iint_Q \xi u |x|^\alpha A \nabla u \cdot\nabla(\Div(|x|^\alpha A \nabla \eta))dx  d t\\
			\ge&
			-C s^2 \lambda \iint_Q \xi \left||x|^\alpha A \nabla \eta \cdot \nabla \eta\right|^2 u^2dx  d t-C s^2 \lambda \int_0^T \int_{\omega_{0}}\xi  u^2dx  d t\\
			&-C \lambda \iint_Q \xi |x|^\alpha A \nabla u \cdot \nabla udx  d t.
		\end{split}
	\end{equation}
	Combining \eqref{T5} to \eqref{T54}, we obtain
	\begin{equation}\label{T55}
		\begin{split}
			T_5 \geq&-Cs^2 \lambda^4 \iint_Q \xi\left||x|^\alpha A \nabla \eta \cdot \nabla \eta\right|^2 |u|^2dx  d t -C\lambda^2\iint_{Q} \xi \left| |x|^\alpha A\nabla u\cdot \nabla \eta \right| ^2 dx  dt\\
			&-C s^2 \lambda^3 \int_0^T \int_{\omega_{0}} \xi|u|^2dx  d t- C\lambda \iint_Q  \xi |x|^\alpha A \nabla u \cdot \nabla u dx  d t.
		\end{split}
	\end{equation}
	Finally, as we can observe from the definitions of $\xi$ and $\sigma$, we have $\sigma_{tt} \leq \xi^{\frac{3}{2}}$. Hence,
	\begin{equation}\label{T6}
		T_6 =-\frac{s}{2} \iint_Q \sigma_{t t}|u|^2dx  d t \geq-C s \iint_Q \xi^{3 / 2}|u|^2dx  d t.
	\end{equation}
	and by Lemma \ref{ETA}, we have
	\begin{equation}\label{T7}
		T_7 =- s \lambda \iint_\Sigma  \xi\left[|x|^\alpha A\nabla u\cdot n\right]^2\frac{\partial \eta}{\pt n} ds dt\ge 0.
	\end{equation}
	From equations \eqref{T1}-\eqref{T4}, \eqref{T55}, and \eqref{T6}-\eqref{T7}, we deduce the following inequality:
	\begin{equation}\label{PPU}
		\begin{split}
			\left(P_1 u, P_2 u\right)
			\geq& Cs^3 \lambda^4 \iint_Q \xi^3 |\nabla \eta \cdot |x|^\alpha A \nabla \eta|^2 |u|^2 dx dt  + C s  \lambda^2 \iint_Q\xi\left||x|^\alpha A  \nabla \eta \cdot \nabla u \right|^2 dx  d t\\
			&-Cs^3 \lambda^3 \int_0^T \int_{\omega_{0}} \xi^3|u|^2dx  d t
			- Cs \lambda \iint_Q \xi |x|^\alpha A \nabla u \cdot \nabla udx  d t\\
			&- Cs \iint_Q \xi^{3 / 2}|u|^2dx  d t.
		\end{split}
	\end{equation}
	since
	\begin{equation*}
		\begin{split}
			&C s \lambda^2 \iint_Q \xi^{3 / 2}|u|^2  dx  d t\\
			&\qquad= C s \lambda^2 \int_0^T \int_{\Om\backslash \om} \xi^{3 / 2}|u|^2  dx  d t + C s \lambda^2 \int_0^T \int_{\om} \xi^{3 / 2}|u|^2  dx  d t\\
			&\qquad\le C s \lambda^2 \int_0^T \int_{\Om\backslash \om} \left| |x|^\alpha A \nabla \eta \cdot \nabla \eta \right|^2 \xi^{3 / 2}|u|^2  dx  d t + C s \lambda^2 \int_0^T \int_{\om} \xi^{3 / 2}|u|^2  dx  d t\\
			&\qquad\le C s \lambda^2 \iint_Q \left| |x|^\alpha A \nabla \eta \cdot \nabla \eta \right|^2 \xi^{3 / 2}|u|^2  dx  d t + C s \lambda^2 \int_0^T \int_{\om} \xi^{3 / 2}|u|^2  dx  d t,
		\end{split}
	\end{equation*}
	clearly, $s \iint_Q \xi^{3 / 2}|u|^2dx dt$ can be absorbed by other terms. Combining \eqref{PP} and \eqref{PPU}, we can conclude that
	\begin{equation}\label{LLU2}
		\begin{split}
			&\left\|P_1 u\right\|^2+\left\|P_2 u\right\|^2+C\iint_Q s^3 \lambda^4 \xi^3\left||x|^\alpha A \nabla \eta \cdot \nabla \eta \right|^2|u|^2 dx dt\\
			&+ C s  \lambda^2 \iint_Q\xi\left||x|^\alpha A  \nabla \eta \cdot \nabla u \right|^2 dx  d t\\
			&\qquad\le C\left\|e^{-s \sigma} f\right\|^2
			+Cs \lambda \iint_Q \xi |x|^\alpha A \nabla u \cdot \nabla udx  d t+Cs^3 \lambda^3 \int_0^T \int_{\omega_{0}} \xi^3|u|^2dx  d t.
		\end{split}
	\end{equation}
	Using the definitions of $P_1 u$ and $P_2 u$, we can observe that
	\begin{equation*}
		\begin{split}
			&s^{-1} \iint_Q \xi^{-1}\left|u_t\right|^2dx  d t\\
			&\qquad= s^{-1} \iint_Q \xi^{-1}(P_1 u -s u \Div(|x|^\alpha A\nabla \sigma)- 2s \nabla u \cdot |x|^\alpha A\nabla \sigma)^2dx  d t\\
			&\qquad\le Cs^{-1}\left\|P_1 u\right\|^2+s   \iint_Q \xi^{-1}|u|^2|\Div(|x|^\alpha A \nabla \sigma)|^2dx  d t +C s \iint_Q \xi^{-1}|\nabla u |x|^\alpha A \nabla \sigma|^2dx  d t \\
			&\qquad\le Cs^{-1}\left\|P_1 u\right\|^2+C s \lambda^4 \iint_Q \xi\left| |x|^\alpha A \nabla \eta \cdot \nabla \eta \right|^2 |u|^2dx  d t \\
			&\qquad\qquad +C s \lambda^2 \iint_Q \xi|u|^2  dx  d t+C s \lambda^2  \iint_Q \xi\left||x|^\alpha A  \nabla \eta \cdot \nabla u \right|^2dx  d t.
		\end{split}
	\end{equation*}
	Obviously, $C s \lambda^2\iint_Q \xi|u|^2dx dt$ can be absorbed by other terms.
	\begin{equation*}
		\begin{split}
			&s^{-1}   \iint_Q \xi^{-1}\left|\Div(|x|^\alpha A\nabla u)\right|^2dx  d t\\
			&\qquad= s^{-1}   \iint_Q \xi^{-1}(P_2 u -s^2 u |x|^\alpha A \nabla \sigma \cdot  \nabla \sigma - s \sigma_t u)^2dx  d t\\
			&\qquad\le C s^{-1} \left\|P_2 u\right\|^2+C s^3 \lambda^4   \iint_Q \xi^3||x|^\alpha A \nabla \eta\cdot \nabla \eta|^2|u|^2dx  d t +C s \lambda^2  \iint_Q \xi^{2}|u|^2dx  d t.
		\end{split}
	\end{equation*}
	Thus, we obtain the inequality
	\begin{equation}\label{LLU4}
		\begin{split}
			&s^{-1}\iint_{Q} \xi^{-1} (|u_t |^2 + \left|\Div(|x|^\alpha A\nabla u)\right|^2) dx dt \\
			&+C\iint_Q s^3 \lambda^4 \xi^3\left||x|^\alpha A \nabla \eta \cdot \nabla \eta \right|^2|u|^2 dx dt  + C s  \lambda^2 \iint_Q\xi\left||x|^\alpha A  \nabla \eta \cdot \nabla u \right|^2 dx  d t\\
			&\qquad\le C\left\|e^{-s \sigma} f\right\|^2
			+Cs \lambda \iint_Q \xi |x|^\alpha A \nabla u \cdot \nabla udx  d t+Cs^3 \lambda^3 \int_0^T \int_{\omega_{0}} \xi^3|u|^2dx  d t.
		\end{split}
	\end{equation}
	Note that
	\begin{equation*}
		\begin{split}
			&Cs^3\lambda^4\iint_Q \xi^3 \left| |x|^\alpha A \nabla  \eta \cdot \nabla \eta\right| ^2 u^2 dxdt\\
			&\qquad\ge Cs^3\lambda^4\int_0^T \int_{\Om\backslash\om} \xi^3  u^2 dxdt + Cs^3\lambda^4\int_0^T \int_{\om} \xi^3 \left| |x|^\alpha A \nabla  \eta \cdot \nabla \eta\right| ^2 u^2 dxdt\\
			&\qquad\ge Cs^3\lambda^4\iint_Q \xi^3  u^2 dxdt -Cs^3\lambda^4\int_0^T \int_{\om} \xi^3  u^2 dxdt.
		\end{split}
	\end{equation*}
	Hence, \eqref{LLU4}  can be written as
	\begin{equation*}
		\begin{split}
			&s^{-1}\iint_{Q} \xi^{-1} (|u_t |^2 + \left|\Div(|x|^\alpha A\nabla u)\right|^2) dx dt \\
			&+C\iint_Q s^3 \lambda^4 \xi^3|u|^2 dx dt  + C s  \lambda^2 \iint_Q\xi\left||x|^\alpha A  \nabla \eta \cdot \nabla u \right|^2 dx  d t\\
			&\qquad\le C\left\|e^{-s \sigma} f\right\|^2
			+Cs \lambda \iint_Q \xi |x|^\alpha A \nabla u \cdot \nabla udx  d t+Cs^3 \lambda^4 \int_0^T \int_{\omega_{0}} \xi^3|u|^2dx  d t.
		\end{split}
	\end{equation*}
	Considering the term $s \lambda \iint_Q \xi |x|^\alpha A \nabla u \cdot \nabla udx d t$, we have
	\begin{equation*}
		\begin{split}
			s \lambda \iint_Q \xi |x|^\alpha A \nabla u \cdot \nabla udx  d t=& s \lambda \iint_\Sigma \xi u |x|^\alpha A \nabla u \cdot n d s d t - s \lambda \iint_Q \Div(\xi |x|^\alpha A\nabla u) udx  d t\\
			=&- s \lambda \iint_Q \Div(\xi |x|^\alpha A\nabla u) udx  d t.
		\end{split}
	\end{equation*}
	Then, we can further get
	\begin{equation*}
		\begin{split}
			&- s \lambda \iint_Q \Div(\xi |x|^\alpha A\nabla u) udx  d t\\
			&\qquad=- s \lambda^2 \iint_Q \xi u\nabla \eta \cdot |x|^\alpha A\nabla u dx  d t - s \lambda \iint_Q \xi\Div( |x|^\alpha A\nabla u) udx  d t\\
			&\qquad\le C \lambda^2 \iint_Q \xi \left||x|^\alpha A  \nabla \eta \cdot \nabla u \right| ^2dx  d t + Cs^2 \lambda^2 \iint_Q \xi u^2dx  d t\\
			&\qquad\qquad + C s^{-1} \lambda^{-1} \iint_Q \xi^{-1} \left| \Div( |x|^\alpha A\nabla u)\right| ^2dx  d t + Cs^3 \lambda^3 \iint_Q \xi^3 u^2dx  d t.
		\end{split}
	\end{equation*}
	Thus, the term $s \lambda \iint_Q \xi |x|^\alpha A \nabla u \cdot \nabla udx d t$ can be absorbed by the other terms. This implies that
	\begin{equation}\label{LLU5}
		\begin{split}
			&s^{-1}\iint_{Q} \xi^{-1} (|u_t |^2 + \left|\Div(|x|^\alpha A\nabla u)\right|^2) dx dt \\
			&+C s^3 \lambda^4 \iint_Q\xi^3|u|^2 dx dt  + C s  \lambda^2 \iint_Q\xi\left||x|^\alpha A  \nabla \eta \cdot \nabla u \right|^2 dx  d t\\
			&\qquad\le C\left\|e^{-s \sigma} f\right\|^2
			+Cs^3 \lambda^4 \int_0^T \int_{\omega_{0}} \xi^3|u|^2dx  d t.
		\end{split}
	\end{equation}
	By employing classical arguments, we can then revert back to the original variable $w$ and conclude the result.
	$\hfill\blacksquare$
	\subsection{The case of \texorpdfstring{$0 \notin \omega_0 $}{}}
	
	\hspace*{\fill}\\
	
	Unlike in case $0 \in \omega_0 $, in case $0 \notin \omega_0 $, we can no longer employ the old method. Therefore, we have adopted a new approach that involves approximating with $\epsilon$, allowing us to control the degenerate part by the non-degenerate part. This is also why we ultimately achieve approximate controllability.
	
	%
	
	In what follows, $C>0$ represents a generic constant which depends $\epsilon$ (we should note that $\epsilon$ depends on the solution $w$ of equation \eqref{3.1}).
	Using standard arguments, we can assume that $w$ possesses sufficient regularity. Specifically, we have $w \in H^1(0,T; \mathcal{H}_0^1(\Omega))$ and $w \in L^2(0,T; H^2(\Om^\epsilon))$.
	\begin{lemma}\label{ETA2}
		For $\epsilon>0$ small enough, there is a weight function that satisfies the following conditions:\\
		($i$)
		$|x|^\alpha A \nabla \eta$, $|x|^\alpha A \nabla \eta \cdot \nabla \eta$, $\nabla(|x|^\alpha A \nabla \eta \cdot \nabla \eta)$, $\Div(|x|^\alpha A \nabla \eta)$, $\Div(|x|^\alpha A \nabla \eta\cdot \nabla \eta)$, $\left(  A\nabla  \eta\right)_i \frac{\partial(|x|^\alpha A)}{\partial x_i} \\
		\in C^0(\overline{{\Omega}})$.\\
		($ii$) 	
		$| |x|^\alpha A \nabla \eta \cdot \nabla \eta |^2 \ge C >0 \ \mbox{in} \ \Om\backslash(\omega\cup \Om_\epsilon)$.
		Moreover, we have
		$\frac{\partial \eta}{\partial {n}} <0 \mbox{ on } \Gamma$, 
		where ${n}$ is the unit outer normal vector on $\Gamma=\partial\Om$.
	\end{lemma}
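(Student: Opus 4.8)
The plan is to build $\eta$ exactly as in the proof of Lemma~\ref{ETA}, namely as the product of a Morse function with a radial cutoff, but with the cutoff engineered so that its transition layer sits \emph{inside} the excluded ball $\Om_\epsilon$ rather than inside the control region. First I would invoke the standard Fursikov--Imanuvilov construction to produce a function $\zeta\in C^3(\overline\Om)$ with $\zeta=0$ on $\Gamma$, $\zeta>0$ in $\Om$, and $|\nabla\zeta|\ge C>0$ on $\overline\Om\setminus\om$; this is legitimate because $\om$ is nonempty, open, and $\overline\om\subset\Om$ is compact. Next I would fix a cutoff $h\in C^\infty(\overline\Om)$ with $0\le h\le 1$, $h\equiv 0$ on $\Om_{\epsilon/2}$ and $h\equiv 1$ on $\Om\setminus\Om_\epsilon$, so that the whole transition annulus $\Om_\epsilon\setminus\Om_{\epsilon/2}$ lies strictly between the radii $\epsilon/2$ and $\epsilon$. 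Setting $\eta:=h\,\zeta$ gives the candidate weight.

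To check the regularity list in item ($i$) I would split $\overline\Om$ into three zones. On $\Om_{\epsilon/2}$ we have $\eta\equiv 0$ and $\nabla\eta\equiv 0$, so every expression in the list vanishes identically and is trivially continuous; this is the entire purpose of forcing $\nabla\eta$ to die near the origin, since it neutralises the singular factor $\nabla(|x|^\alpha)=\alpha|x|^{\alpha-2}x$, whose magnitude $\alpha|x|^{\alpha-1}$ would otherwise blow up at $0$ for $\alpha<1$. On $\Om_\epsilon\setminus\Om_{\epsilon/2}$ and on $\Om\setminus\Om_\epsilon$ we have $|x|\ge \epsilon/2$, so $|x|^\alpha$ is smooth there and each listed expression is a smooth combination of $|x|^\alpha$, the entries $A_{ij}\in C^1(\overline\Om)$, and $\eta\in C^3$; matching across the interfaces $|x|=\epsilon/2$ and $|x|=\epsilon$ is automatic since $h$ is smooth. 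Hence all functions in ($i$) belong to $C^0(\overline\Om)$.

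For item ($ii$), on $\Om\setminus(\om\cup\Om_\epsilon)$ we have $h\equiv 1$, so $\eta=\zeta$ and $\nabla\eta=\nabla\zeta$; as this set lies in $\Om\setminus\om$ we get $|\nabla\zeta|\ge C>0$, while $|x|\ge\epsilon$ forces $|x|^\alpha\ge\epsilon^\alpha$. Combining with uniform ellipticity $A\nabla\zeta\cdot\nabla\zeta\ge\beta|\nabla\zeta|^2$ yields $|x|^\alpha A\nabla\eta\cdot\nabla\eta\ge\epsilon^\alpha\beta C^2>0$, which gives ($ii$). Finally, on a neighbourhood of $\Gamma$ (where $|x|$ is comparable to $d(0,\Gamma)>9\epsilon$) we again have $\eta=\zeta$; since $\zeta$ vanishes on $\Gamma$, is positive inside, and has nonvanishing gradient on $\Gamma\subset\overline\Om\setminus\om$, its gradient is an inward normal, so $\partial\eta/\partial n=\partial\zeta/\partial n<0$ on $\Gamma$.

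The one genuine obstacle, and the only place this differs from Lemma~\ref{ETA}, is the tension between ($i$) and ($ii$): regularity at the origin demands $\nabla\eta\equiv 0$ near $0$, whereas the lower bound demands that $\eta$ coincide with the full Morse function $\zeta$ on all of $\Om\setminus(\om\cup\Om_\epsilon)$. In Lemma~\ref{ETA} this was painless because $\Om_{6\epsilon}\subset\om_0$ absorbed the transition layer into the control region; here $0\notin\om_0$, so instead I must squeeze the entire transition layer into the ball $\Om_\epsilon$ that is already removed in ($ii$). The hypothesis $\Om_{2\epsilon}\cap\overline\om=\emptyset$ is what makes this placement consistent: it forces $\Om_\epsilon\cap\overline\om=\emptyset$, so the support of $\nabla h$ is disjoint from $\om$, and the two regions $\om$ and $\Om_\epsilon$ that are excluded from ($ii$) do not interfere with the identity $\eta=\zeta$ used to derive the lower bound on their complement.
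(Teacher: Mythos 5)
Your construction is correct and is essentially the paper's own proof: both take $\eta=h\cdot\zeta$ with $\zeta$ a Fursikov--Imanuvilov-type function ($\zeta=0$ on $\Gamma$, $\zeta>0$ in $\Om$, $|\nabla\zeta|\ge C>0$ off $\om$) and $h$ a radial cutoff whose transition layer is squeezed inside $\Om_\epsilon$, so that $\eta=\zeta$ on $\Om\setminus(\om\cup\Om_\epsilon)$ (giving item ($ii$) and the sign of $\partial\eta/\partial n$ on $\Gamma$) while the cutoff neutralises the singular factor $\nabla(|x|^\alpha)=O(|x|^{\alpha-1})$ at the origin (giving item ($i$)). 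The only, harmless, deviation is your choice of cutoff near the origin: the paper takes $h(x)=2\epsilon^{-2}|x|^2$ for $|x|\le\frac{1}{2}\epsilon$ (smoothed through $\epsilon^{-1}|x|$ to $1$), which vanishes only at $x=0$ but still yields $\nabla\eta=O(|x|)$, enough to cancel the $O(|x|^{\alpha-1})$ singularity since $\alpha>0$, whereas your $h\equiv 0$ on $\Om_{\epsilon/2}$ makes the continuity checks in ($i$) trivial at the cost of $\eta$ vanishing on a whole ball rather than at a single point.
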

	\begin{proof}
		Consider a function $\zeta \in C^2(\overline{\Omega})$ satisfying $\zeta = 0$ on $\partial \Om$, $\zeta>0$ in $\Om$,  and $\nabla \zeta \ge C>0$ in $\Om\backslash\omega$.
		Define a smooth function (by standard mollification method)
		\begin{equation*}
			h(x)=
			\begin{cases}
				2\epsilon^{-2}|x|^2, & |x|\leq \frac{1}{2}\epsilon,\\
				1, & |x|\geq \epsilon,\\
				\mbox{smooth the function}\ \epsilon^{-1}|x|, & \mbox{else}. 
			\end{cases}
		\end{equation*}
		Let $\eta=h\cdot \zeta$, then $\eta$ satisfies item ($i$) and item ($ii$). 
		
		Finally, we have $\frac{\partial \eta}{\partial {n}} <0 \mbox{ on } \Gamma$ since $\eta>0$ in $\Om^\epsilon$ and $\eta=0$ on $\Gamma$.
	\end{proof}
	\begin{lemma}\label{EPSILON}
		For every $u\in \mathcal{H}_0^1(\Om)$, there is a sufficiently small $\epsilon=\epsilon(u)$ such that 
		\begin{equation}\label{epsilonU}
			\int_{0}^{T}\int_{\Omega_\epsilon}   u^2 dx  dt\le \frac{1}{4}  \int_{0}^{T}\int_{\Om\backslash(\om\cup \Omega_\epsilon)}   u^2 dx dt
		\end{equation}
		and
		\begin{equation}\label{epsilon2U}
			\int_{0}^{T}\int_{\Omega_\epsilon}   |x|^\alpha A \nabla u \cdot \nabla u dx  dt\le \frac{1}{4}  \int_{0}^{T}\int_{\Om\backslash(\om\cup \Omega_\epsilon)}   |x|^\alpha A \nabla u \cdot \nabla u dx  dt.
		\end{equation}
	\end{lemma}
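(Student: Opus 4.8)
The plan is to exploit the absolute continuity of the Lebesgue integral together with the fact that the degenerate core $\Omega_\epsilon$ collapses to the single point $\{0\}$ as $\epsilon\to 0^+$. I read $u$ as an element of $L^2(0,T;\mcH_0^1(\Om))$, so that the time integrals make sense. The only two inputs I need are that both integrands lie in $L^1(Q)$: by Lemma \ref{L2.2} one has the continuous embedding $\mcH_0^1(\Om)\hookrightarrow L^2(\Om)$, whence $u^2\in L^1(Q)$, and by the very definition of the $\mcH_0^1(\Om)$-norm one has $|x|^\alpha A\nabla u\cdot\nabla u\in L^1(Q)$.

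First I would record the geometric inclusion $\Omega_\epsilon\subset\Om\setminus\om$. Indeed $\Omega_\epsilon\subset\Om_{2\epsilon}$, and the standing assumption $\Om_{2\epsilon}\cap\overline{\om}=\emptyset$ forces $\Omega_\epsilon\cap\om=\emptyset$. Consequently the right-hand domain $\Om\setminus(\om\cup\Omega_\epsilon)=(\Om\setminus\om)\cap\{|x|\ge\epsilon\}$ increases to $\Om\setminus\om$ (up to the null set $\{0\}$) as $\epsilon\downarrow 0$, while the left-hand domain $\Omega_\epsilon$ decreases to $\{0\}$. Next, for each integrand $F\in\{u^2,\ |x|^\alpha A\nabla u\cdot\nabla u\}$ define $g_F(\epsilon)=\int_0^T\!\int_{\Omega_\epsilon}F\,dx\,dt$ and $G_F(\epsilon)=\int_0^T\!\int_{\Om\setminus(\om\cup\Omega_\epsilon)}F\,dx\,dt$. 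Since $F\in L^1(Q)$ and $|\Omega_\epsilon\times(0,T)|\to 0$, absolute continuity of the integral gives $g_F(\epsilon)\to 0$ as $\epsilon\to 0^+$; monotone convergence gives $G_F(\epsilon)\uparrow G_F^0:=\int_0^T\!\int_{\Om\setminus\om}F\,dx\,dt$.

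The conclusion then follows by splitting on the sign of $G_F^0$. If $G_F^0>0$, then $G_F(\epsilon)\ge G_F^0/2$ for all small $\epsilon$, while $g_F(\epsilon)\to 0$, so $g_F(\epsilon)\le\tfrac14 G_F(\epsilon)$ once $\epsilon$ is small enough. If $G_F^0=0$, then, using that $F\ge 0$ (for the second integrand this is the ellipticity bound $|x|^\alpha A\nabla u\cdot\nabla u\ge\beta|x|^\alpha|\nabla u|^2\ge 0$), we get $F=0$ a.e.\ on $(\Om\setminus\om)\times(0,T)$, and since $\Omega_\epsilon\subset\Om\setminus\om$ this forces $g_F(\epsilon)=0$, so the inequality $0\le 0$ holds for every $\epsilon$. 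Applying this to $F=u^2$ and to $F=|x|^\alpha A\nabla u\cdot\nabla u$ produces two thresholds, and taking $\epsilon=\epsilon(u)$ below the smaller of them yields a single $\epsilon$ satisfying both \eqref{epsilonU} and \eqref{epsilon2U}.

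The only genuinely delicate point is the degenerate alternative $G_F^0=0$: one must check that the claimed inequality is not vacuously false but in fact trivially satisfied, which is precisely where the inclusion $\Omega_\epsilon\subset\Om\setminus\om$ and the nonnegativity of both integrands enter. Everything else is a routine limiting argument, so I do not expect a serious obstacle; the proof is short once the two $L^1(Q)$ memberships and the geometric inclusion are in place.
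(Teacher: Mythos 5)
Your proof is correct and takes essentially the same route as the paper's own (very terse) proof, which argues by the same dichotomy: the inequality is trivial when the integrand vanishes on $\Omega\setminus\omega$, and otherwise holds for $\epsilon$ small because the left side tends to $0$ while the right side stays bounded away from $0$. Your write-up simply supplies the justifications the paper leaves implicit --- the inclusion $\Omega_\epsilon\subset\Omega\setminus\omega$ from $\Omega_{2\epsilon}\cap\overline{\omega}=\emptyset$, absolute continuity of the integral for the $L^1(Q)$ integrands, monotone convergence for the right-hand side, and nonnegativity (via ellipticity) in the degenerate case --- so no changes are needed.
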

	\begin{proof}
		Let's first prove that equation \eqref{epsilonU} holds. If $u=0$ in $\Om\backslash\om$, then equation \eqref{epsilonU} holds. If $u\neq0$ in $\Om\backslash\om$, then we can make $\epsilon$ small enough so that
		\begin{equation*}
			\begin{split}
				\int_{0}^{T}\int_{\Omega_\epsilon}   u^2 dx  dt\le \frac{1}{4}\int_{0}^{T}\int_{\Om\backslash(\om\cup \Omega_\epsilon)}   u^2 dx  dt.
			\end{split}
		\end{equation*}
		The same applies to equation \eqref{epsilon2U}.
	\end{proof}
	
	\begin{remark}
		If we are given a function $u\in \mathcal{H}_0^1(\Om)$, we can choose a suitable constant $\epsilon>0$ such that Lemma \ref{ETA2} and Lemma \ref{EPSILON} hold. It is important to note that $\epsilon$ is determined by $u$ in this context. We first specify $u$ and then choose an appropriate $\epsilon$ to ensure the validity of the lemma mentioned above. This is also why, in the subsequent proofs, we will derive conclusions regarding the dependence of the coefficient $C$ on $u$.
	\end{remark}
	
	Now, for any $\lambda>\lambda_0>0$ and $s>s_0>0$, let us introduce the following functions and
	constants:
	\begin{equation*}
		\begin{split}
			& \theta(t):=[t(T-t)]^{-4}, \quad \xi(x, t):=\theta(t) e^{ \lambda(8|\eta|_\infty+\eta (x))}, \quad \sigma(x, t):=\theta(t) e^{10 \lambda|\eta|_\infty}-\xi(x, t).
		\end{split}
	\end{equation*}
	Next, let us establish the Carleman estimate for case $0 \notin \omega_0$.
	
	{\bf Proof of Theorem \ref{Carleman2}:}
	Consider $s>s_0>0$ and introduce
	\begin{equation*}
		u=e^{-s\sigma} w.  
	\end{equation*}
	Then, the following properties hold for $u$:
	\begin{itemize}
		\item [($i$)] $u=\frac{\partial u}{\partial x_i}=0$ at $t=0$ and $t=T$;
		\item [($ii$)] $u=0$  on $\Sigma$;
		\item [($iii$)] Let 
		\begin{equation*}
			\begin{split}
				P_1 u
				&:=-2s\lambda^2\xi u |x|^\alpha A \nabla \eta\cdot \nabla \eta- 2s\lambda\xi  |x|^\alpha A \nabla u\cdot \nabla \eta +u_t=:(P_1u)_1+(P_1u)_2+(P_1u)_3,\\
				P_2 u
				&:=s^2\lambda^2\xi^2 u |x|^\alpha A \nabla \eta\cdot \nabla \eta+\Div(|x|^\alpha A \nabla u)+s \sigma_t u=:(P_2u)_1+(P_2u)_2+(P_2u)_3,\\
				f_{s,\lambda}
				&:=e^{-s \sigma} f - s\lambda^2\xi u |x|^\alpha A \nabla \eta\cdot \nabla \eta + s\lambda \xi u\Div(|x|^\alpha A \nabla \eta) .
			\end{split}
		\end{equation*}
		Then 
		\begin{equation*}
			P_1 u+P_2 u=f_{s,\lambda}.
		\end{equation*}
	\end{itemize}
	From ($iii$), it follows that
	\begin{equation}\label{PPP}
		\left\|P_1 u\right\|_{L^2(Q)}^2+\left\|P_2 u\right\|_{L^2(Q)}^2+2\sum_{i,j=1}^{3}\left((P_1 u)_i, (P_2 u)_j\right)_{L^2(Q)}=\left\|f_{s,\lambda}\right\|_{L^2(Q)}^2.
	\end{equation}
	
	First, we have
	\begin{equation*}
		\left((P_1 u)_1, (P_2 u)_1\right)_{L^2(Q)}=-2s^3 \lambda^4 \iint_Q \left| |x|^\alpha A \nabla  \eta \cdot \nabla \eta\right| ^2 \xi^3 u^2 dx  dt=:A,
	\end{equation*}
	and
	\begin{equation*}
		\begin{split}
			\left((P_1 u)_2, (P_2 u)_1\right)_{L^2(Q)}=
			&s^3 \lambda^3 \iint_Q \xi^3\Div( |x|^{\alpha} A \nabla  \eta)   |x|^{\alpha} A \nabla  \eta \cdot \nabla \eta  u^2 dx  dt\\
			&+3s^3 \lambda^4 \iint_Q \left| |x|^\alpha A \nabla  \eta \cdot \nabla \eta\right| ^2 \xi^3 u^2 dx  dt\\
			&+s^3 \lambda^3 \iint_Q \xi^3 |x|^\alpha A \nabla  \eta   \cdot \nabla( |x|^{\alpha} A \nabla  \eta\cdot \nabla  \eta)u^2 dx  dt\\
			=&: B_1 + B_2 +B_3.
		\end{split}
	\end{equation*}
	We clearly have that $A+B_2=s^3 \lambda^4 \iint_Q \left| |x|^\alpha A \nabla  \eta \cdot \nabla \eta\right| ^2 \xi^3 u^2 dx  dt$ is a positive term. As a consequence of the properties of $\eta$ (see Lemma \ref{ETA2}), we have
	\begin{equation*}
		\begin{split}
			A+B_2=&s^3 \lambda^4 \iint_Q \left| |x|^\alpha A \nabla  \eta \cdot \nabla \eta\right| ^2 \xi^3 u^2 dx  dt\\
			\ge& s^3 \lambda^4 \iint_Q \left| |x|^\alpha A \nabla  \eta \cdot \nabla \eta\right| ^2 \xi^3 u^2 dx  dt - s^3 \lambda^4 \int_{0}^{T} \int_{\om}   \xi^3 u^2 dx  dt=: A^*+B^*.
		\end{split}
	\end{equation*}
	For some $C=C(\om_0,\Om)$, and
	\begin{equation*}
		\begin{split}
			B_1=&s^3 \lambda^3 \iint_Q \xi^3\Div( |x|^{\alpha} A \nabla  \eta)   |x|^{\alpha} A \nabla  \eta \cdot \nabla \eta  u^2 dx  dt\\
			=&s^3 \lambda^3 \int_{0}^{T}\int_{\om} \xi^3\Div( |x|^{\alpha} A \nabla  \eta)   |x|^{\alpha} A \nabla  \eta \cdot \nabla \eta  u^2 dx  dt\\
			&+s^3 \lambda^3 \int_{0}^{T}\int_{\Om\backslash\om} \xi^3\Div( |x|^{\alpha} A \nabla  \eta)   |x|^{\alpha} A \nabla  \eta \cdot \nabla \eta  u^2 dx  dt\\
			\ge&-Cs^3 \lambda^3 \int_{0}^{T}\int_{\om} \xi^3  u^2 dx  dt -Cs^3 \lambda^3 \int_{0}^{T}\int_{\Om\backslash\om} \xi^3  u^2 dx  dt,\\
		\end{split}
	\end{equation*}
	where
	\begin{equation*}
		\begin{split}
			-Cs^3 \lambda^3 \int_{0}^{T}\int_{\Om\backslash\om} \xi^3  u^2 dx  dt=
			-Cs^3 \lambda^3 \int_{0}^{T}\int_{\Om\backslash(\om\cup \Omega_\epsilon)} \xi^3  u^2 dx  dt -Cs^3 \lambda^3 \int_{0}^{T}\int_{\Omega_\epsilon} \xi^3  u^2 dx  dt.
		\end{split}
	\end{equation*}
	From Lemma \ref{EPSILON}, we can deduce
	\begin{equation}\label{epsilon}
		\begin{split}
			-Cs^3 \lambda^3 \int_{0}^{T}\int_{\Omega_\epsilon} \xi^3  u^2 dx  dt\ge -\frac{1}{4}Cs^3 \lambda^3 \int_{0}^{T}\int_{\Om\backslash(\om\cup \Omega_\epsilon)} \xi^3  u^2 dx  dt,
		\end{split}
	\end{equation}
	for some $C=C(\om_0,\Om,u)$, hence we have
	\begin{equation*}
		\begin{split}
			-Cs^3 \lambda^3 \int_{0}^{T}\int_{\Om\backslash\om} \xi^3  u^2 dx  dt\ge
			-Cs^3 \lambda^3 \int_{0}^{T}\int_{\Om\backslash(\om\cup \Omega_\epsilon)} \xi^3  u^2 dx  dt,
		\end{split}
	\end{equation*}
	and since $| |x|^\alpha A \nabla \eta \cdot \nabla \eta |^2 \ge C >0 \ \mbox{in} \ \Om\backslash(\omega\cup \Om_\epsilon)$, we can deduce 
	\begin{equation*}
		\begin{split}
			-Cs^3 \lambda^3 \int_{0}^{T}\int_{\Om\backslash(\om\cup \Omega_\epsilon)} \xi^3  u^2 dx  dt\ge&
			-Cs^3 \lambda^3 \int_{0}^{T}\int_{\Om\backslash(\om\cup \Omega_\epsilon)} \xi^3 | |x|^\alpha A \nabla \eta \cdot \nabla \eta |^2 u^2 dx  dt\\
			\ge&
			-Cs^3 \lambda^3 \iint_{Q} \xi^3 | |x|^\alpha A \nabla \eta \cdot \nabla \eta |^2 u^2 dx  dt.
		\end{split}
	\end{equation*}
	Thus we have
	\begin{equation*}
		\begin{split}
			B_1
			\ge-Cs^3 \lambda^3 \int_{0}^{T}\int_{\om} \xi^3  u^2 dx  dt 	-Cs^3 \lambda^3 \iint_{Q} \xi^3 | |x|^\alpha A \nabla \eta \cdot \nabla \eta |^2 u^2 dx  dt.
		\end{split}
	\end{equation*}
	In a similar way, we can deduce
	\begin{equation*}
		\begin{split}
			B_3=&s^3 \lambda^3 \iint_Q \xi^3 |x|^\alpha A \nabla  \eta   \Div( |x|^{\alpha} A \nabla  \eta\cdot \nabla  \eta)u^2 dx  dt\\
			\ge&-Cs^3 \lambda^3 \int_{0}^{T}\int_{\om} \xi^3  u^2 dx  dt 	-Cs^3 \lambda^3 \iint_{Q} \xi^3 | |x|^\alpha A \nabla \eta \cdot \nabla \eta |^2 u^2 dx  dt.
		\end{split}
	\end{equation*}
	The terms $B_1$ and $B_3$ are absorbed by $A^*$ and $B^*$ by simply taking $\lambda \ge C$.
	
	By the definitions of $\xi$, we have $\xi\xi_t\le C\xi^3$. From item ($i$) and  item ($ii$), we have
	\begin{equation*}
		\begin{split}
			\left((P_1 u)_3, (P_2 u)_1\right)_{L^2(Q)}=&
			s^2 \lambda^2 \iint_Q \xi^2 |x|^\alpha A \nabla \eta \cdot \nabla \eta u u_t dx  dt\\
			=&\frac{1}{2}s^2 \lambda^2 \int_\Om \xi^2 |x|^\alpha A \nabla \eta \cdot \nabla \eta u^2 dx \bigg|_0^T - s^2 \lambda^2 \iint_Q \xi \xi_t |x|^\alpha A \nabla \eta \cdot \nabla \eta u^2 dx  dt\\
			=&- s^2 \lambda^2 \iint_Q \xi \xi_t |x|^\alpha A \nabla \eta \cdot \nabla \eta u^2 dx  dt\\
			\ge&- Cs^2 \lambda^2 \iint_Q \xi^3 | |x|^\alpha A \nabla \eta \cdot \nabla \eta |^2 u^2 dx  dt
			- Cs^2 \lambda^2 \int_{0}^{T}\int_{\om} \xi^3  u^2 dx  dt,
		\end{split}
	\end{equation*}
	for some $C=C(\om_0,\Om,u)$. This term can also be absorbed by $A^*$ and $B^*$. Consequently, we have
	\begin{equation}\label{P1P21}
		\begin{split}
			\left((P_1 u), (P_2 u)_1\right)_{L^2(Q)}=&\left((P_1 u)_1, (P_2 u)_1\right)_{L^2(Q)} + \left((P_1 u)_2, (P_2 u)_1\right)_{L^2(Q)}+\left((P_1 u)_3, (P_2 u)_1\right)_{L^2(Q)}\\
			\ge&C s^3 \lambda^4 \iint_Q \left| |x|^\alpha A \nabla  \eta \cdot \nabla \eta\right| ^2 \xi^3 u^2 dx  dt - Cs^3 \lambda^4 \int_{0}^{T} \int_{\om}   \xi^3 u^2 dx  dt.
		\end{split}
	\end{equation}
	On the other hand, we have
	\begin{equation*}
		\begin{split}
			\left((P_1 u)_1, (P_2 u)_2\right)_{L^2(Q)}=&-2s \lambda^2 \iint_Q \xi  |x|^\alpha A \nabla \eta \cdot \nabla \eta  \Div(|x|^\alpha A \nabla u) u dx  dt\\
			=&2s \lambda^2 \iint_Q \xi |x|^\alpha A \nabla \eta \cdot \nabla \eta |x|^\alpha A \nabla u \cdot \nabla u   dx  dt\\
			&+2s \lambda^2 \iint_Q \xi  u |x|^\alpha A \nabla u  \cdot \nabla( |x|^\alpha A \nabla \eta \cdot \nabla \eta)dx  dt\\
			&+2s \lambda^3 \iint_Q \xi u |x|^\alpha A \nabla \eta \cdot \nabla \eta |x|^\alpha A \nabla u \cdot \nabla \eta   dx  dt\\
			=&:C_1+C_2+C_3.
		\end{split}
	\end{equation*}
	We will keep $C_1$ on the left-hand side. For $C_2$ and $C_3$, similar to the estimate of $B_1$, from Lemma \ref{ETA2} and Lemma \ref{EPSILON}, we have
	\begin{equation*}
		\begin{split}
			C_2=&2s \lambda^2 \iint_Q \xi \nabla( |x|^\alpha A \nabla \eta \cdot \nabla \eta) u |x|^\alpha A \nabla u  dx  dt\\
			\ge&-Cs^2 \lambda^3 \iint_Q \xi |x|^\alpha A \nabla( |x|^\alpha A \nabla \eta \cdot \nabla \eta) \cdot \nabla( |x|^\alpha A \nabla \eta \cdot \nabla \eta) u^2  dx  dt\\
			&-C \lambda \iint_Q \xi |x|^\alpha A \nabla u \cdot \nabla u dx  dt\\
			\ge& -Cs^2 \lambda^3 \iint_Q \left| |x|^\alpha A \nabla  \eta \cdot \nabla \eta\right| ^2 \xi u^2 dx  dt - Cs^2 \lambda^3 \int_{0}^{T} \int_{\om}   \xi u^2 dx  dt\\
			&-C \lambda \iint_Q \xi |x|^\alpha A \nabla \eta \cdot \nabla \eta|x|^\alpha A \nabla u \cdot \nabla u dx  dt
			-C \lambda \int_{0}^{T}\int_\om \xi |x|^\alpha A \nabla u \cdot \nabla u dx  dt.
		\end{split}
	\end{equation*}
	For $C_3$ we have
	\begin{equation*}
		\begin{split}
			C_3=&2s \lambda^3 \iint_Q \xi u |x|^\alpha A \nabla \eta \cdot \nabla \eta |x|^\alpha A \nabla u \cdot \nabla \eta   dx  dt\\
			\ge&-Cs^2 \lambda^4 \iint_Q \xi \left| |x|^\alpha A \nabla  \eta \cdot \nabla \eta\right| ^2 u^2  dx  dt\\
			&-C \lambda^2 \iint_Q \xi  |x|^\alpha A \nabla \eta \cdot \nabla \eta |x|^\alpha A \nabla u \cdot \nabla u dx  dt.
		\end{split}
	\end{equation*}
	Thus, we have 
	\begin{equation*}
		\begin{split}
			&\left((P_1 u)_1, (P_2 u)_2\right)_{L^2(Q)}\\
			&\qquad\ge C_1
			-Cs^2 \lambda^4 \iint_Q \left| |x|^\alpha A \nabla  \eta \cdot \nabla \eta\right| ^2 \xi u^2 dx  dt - Cs^2 \lambda^3 \int_{0}^{T} \int_{\om}   \xi u^2 dx  dt\\
			&\qquad\qquad -C \lambda^2 \iint_Q \xi  |x|^\alpha A \nabla \eta \cdot \nabla \eta |x|^\alpha A \nabla u \cdot \nabla u dx  dt
			-C \lambda \int_{0}^{T}\int_\om \xi |x|^\alpha A \nabla u \cdot \nabla u dx  dt.
		\end{split}
	\end{equation*}
	We also have
	\begin{equation*}
		\begin{split}
			\left((P_1 u)_2, (P_2 u)_2\right)_{L^2(Q)}
			=& -2s \lambda \iint_Q \xi  |x|^\alpha A \nabla  u \cdot \nabla \eta 
			\Div(|x|^\alpha A \nabla u) dx dt\\
			=& - 2s \lambda \iint_\Sigma \xi \left[|x|^\alpha A\nabla u\cdot n\right]^2\frac{\partial\eta}{\partial n} ds dt\\
			&+2s \lambda^2 \iint_Q \xi  |x|^\alpha A \nabla  u \cdot \nabla \eta  |x|^\alpha A \nabla  u \cdot \nabla \eta dx dt\\
			&+2s \lambda \iint_Q \xi 
			|x|^\alpha A \nabla u \cdot \nabla(  |x|^\alpha A \nabla  u \cdot \nabla \eta) dx dt\\
			=&:D_1+D_2+D_3.
		\end{split}
	\end{equation*}
	Let us remark that $D_2$ is a positive term.
	Furthermore,
	\begin{equation*}
		\begin{split}
			D_3=&2s \lambda \iint_Q \xi  |x|^\alpha A \nabla u \cdot \nabla(  |x|^\alpha A \nabla  u \cdot \nabla \eta)dx dt\\
			=&2s \lambda \sum_{i=1}^{N}\iint_Q  \xi \left( |x|^\alpha A \nabla  \eta\right)_i |x|^\alpha A \nabla  u \cdot \frac{\partial}{\partial x_i} (\nabla u)  + \xi(\nabla u)_i  |x|^\alpha A \nabla  u \cdot \nabla \left( |x|^\alpha A \nabla  \eta\right) _i  dx dt\\
			=&:D_{31}+D_{32}.
		\end{split}
	\end{equation*}
	After some additional computations we also see that
	\begin{equation*}
		\begin{split}
			D_{31}=&2s \lambda \sum_{i=1}^{N}\iint_Q  \xi \left( |x|^\alpha A \nabla  \eta\right)_i |x|^\alpha A \nabla  u \cdot \frac{\partial}{\partial x_i} (\nabla u) dx dt\\
			=& s \lambda \sum_{i=1}^{N}\iint_Q  \xi \left( |x|^\alpha A \nabla  \eta\right)_i \left[ \frac{\partial}{\partial x_i} \left(|x|^\alpha A \nabla  u \cdot \nabla u  \right) - \frac{\partial(|x|^\alpha A)}{\partial x_i} \nabla  u \cdot \nabla u \right]  dx dt\\
			=&s \lambda \iint_\Sigma \xi [|x|^\alpha An\cdot n][|x|^\alpha A\nabla u\cdot \nabla u]\frac{\partial \eta}{\partial n} ds dt
			-s \lambda \iint_Q \xi  |x|^\alpha A \nabla u \cdot \nabla u \Div(|x|^\alpha A \nabla \eta)  dx dt\\
			&-s \lambda \sum_{i=1}^{N}\iint_Q \xi \left( |x|^\alpha A \nabla  \eta\right)_i \frac{\partial(|x|^\alpha A)}{\partial x_i} \nabla  u \cdot \nabla u   dx dt - s\lambda^2 \iint_Q \xi  |x|^\alpha A \nabla \eta \cdot \nabla \eta |x|^\alpha A \nabla u \cdot \nabla u dx  dt\\
			=&:D_{311}+D_{312}+D_{313} +D_{314}.
		\end{split}
	\end{equation*}
	It is easy to see that $C_1 + D_{314}=s\lambda^2 \iint_Q \xi  |x|^\alpha A \nabla \eta \cdot \nabla \eta |x|^\alpha A \nabla u \cdot \nabla u dx  dt\ge 0$. By virtue of the properties satisfied by $\eta$ and $\nabla u=\frac{\partial u}{\partial n}n$ on $\Sigma$, we notice that $D_1 + D_{311}\ge0$, and for $D_{32}$, we have
	\begin{equation*}
		\begin{split}
			D_{32}=&2s \lambda \sum_{i=1}^{N}\iint_Q \xi (\nabla u)_i  |x|^\alpha A \nabla  u \cdot \nabla \left( |x|^\alpha A \nabla  \eta\right) _i  dx dt\\
			=&2s \lambda \sum_{i=1}^{N} \iint_Q (\xi^{\frac{1}{2}} |x|^{\frac{\alpha}{2}} A^{\frac{1}{2}} \nabla u)(\xi^{\frac{1}{2}} |x|^{\frac{\alpha}{2}} A^{\frac{1}{2}} \nabla \left( |x|^\alpha A \nabla  \eta\right) _i  (\nabla u)_i ) dx dt\\
			\ge& -Cs \lambda \iint_Q \xi  |x|^\alpha A\nabla u  \cdot \nabla u dx dt\\
			&-Cs \lambda \sum_{i=1}^{N}\iint_Q \xi  |x|^\alpha A\nabla \left( |x|^\alpha A \nabla  \eta\right) _i \cdot \nabla \left( |x|^\alpha A \nabla  \eta\right) _i  (\nabla u)_i \cdot (\nabla u)_i dx dt\\
			\ge& -Cs \lambda \iint_Q \xi  |x|^\alpha A\nabla u  \cdot \nabla u dx dt
			-Cs \lambda \sum_{i=1}^{N}\iint_Q \xi  |x|^\alpha   (\nabla u)_i \cdot (\nabla u)_i dx dt\\
			\ge& -Cs \lambda \iint_Q \xi  |x|^\alpha A\nabla u  \cdot \nabla u dx dt
			-Cs \lambda \iint_Q \xi  |x|^\alpha A\nabla u  \cdot \nabla u dx dt\\
			\ge&-Cs \lambda \iint_Q \xi  |x|^\alpha A\nabla u  \cdot \nabla u dx dt\\
			\ge&-C s\lambda \iint_Q \xi  |x|^\alpha A \nabla \eta \cdot \nabla \eta |x|^\alpha A \nabla u \cdot \nabla u dx  dt -Cs \lambda \int_{0}^{T}\int_\om \xi  |x|^\alpha A\nabla u  \cdot \nabla u dx dt.
		\end{split}
	\end{equation*}
	In a similar way, we also have 
	\begin{equation*}
		\begin{split}
			D_{312}=&-s \lambda \iint_Q \xi  |x|^\alpha A \nabla u \cdot \nabla u \Div(|x|^\alpha A \nabla \eta)  dx dt\\
			\ge&-Cs \lambda \iint_Q \xi  |x|^\alpha A\nabla u  \cdot \nabla u dx dt\\
			\ge&-C s\lambda \iint_Q \xi  |x|^\alpha A \nabla \eta \cdot \nabla \eta |x|^\alpha A \nabla u \cdot \nabla u dx  dt -Cs \lambda \int_{0}^{T}\int_\om \xi  |x|^\alpha A\nabla u  \cdot \nabla u dx dt,
		\end{split}
	\end{equation*}
	and
	\begin{equation*}
		\begin{split}
			D_{313}=&-s \lambda \sum_{i=1}^{N}\iint_Q \xi \left( |x|^\alpha A \nabla  \eta\right)_i \frac{\partial(|x|^\alpha A)}{\partial x_i} \nabla  u \cdot \nabla u   dx dt\\
			\ge&-Cs \lambda \iint_Q \xi  |x|^\alpha A\nabla u  \cdot \nabla u dx dt\\
			\ge&-C s\lambda \iint_Q \xi  |x|^\alpha A \nabla \eta \cdot \nabla \eta |x|^\alpha A \nabla u \cdot \nabla u dx  dt -Cs \lambda \int_{0}^{T}\int_\om \xi  |x|^\alpha A\nabla u  \cdot \nabla u dx dt.
		\end{split}
	\end{equation*}
	Consequently,
	\begin{equation*}
		\begin{split}
			\left((P_1 u)_2, (P_2 u)_2\right)_{L^2(Q)}
			\ge&
			D_{314} + Cs \lambda^2 \iint_Q \xi  \left| |x|^\alpha A \nabla  u \cdot \nabla \eta\right| ^2  dx  dt\\
			&-C s\lambda \iint_Q \xi  |x|^\alpha A \nabla \eta \cdot \nabla \eta |x|^\alpha A \nabla u \cdot \nabla u dx  dt\\
			&-Cs \lambda \int_{0}^{T}\int_\om \xi  |x|^\alpha A\nabla u  \cdot \nabla u dx dt.
		\end{split}
	\end{equation*}
	Additionally, we find that
	\begin{equation*}\label{P13P22}
		\begin{split}
			\left((P_1 u)_3, (P_2 u)_2\right)_{L^2(Q)}
			=
			& \iint_Q \Div(|x|^\alpha A \nabla  u) u_t dx  dt\\
			=&\iint_\Sigma u_t |x|^\alpha A\nabla u \cdot n ds dt - \iint_{Q} |x|^\alpha A\nabla u \cdot \nabla u_t dxdt\\
			=&-\frac{1}{2}\int_\Om |x|^\alpha A\nabla u \cdot \nabla u dx \bigg|_0^T =0.
		\end{split}
	\end{equation*}
	Thus, for sufficiently large $\lambda$, we have
	\begin{equation}\label{P1P22}
		\begin{split}
			&\left((P_1 u), (P_2 u)_2\right)_{L^2(Q)}\\
			&\qquad= \left((P_1 u)_1, (P_2 u)_2\right)_{L^2(Q)} + \left((P_1 u)_2, (P_2 u)_2\right)_{L^2(Q)}+\left((P_1 u)_3, (P_2 u)_2\right)_{L^2(Q)}\\
			&\qquad\ge Cs \lambda^2 \iint_Q \xi  \left| |x|^\alpha A \nabla  u \cdot \nabla \eta\right| ^2  dx  dt
			+C s\lambda^2 \iint_Q \xi  |x|^\alpha A \nabla \eta \cdot \nabla \eta |x|^\alpha A \nabla u \cdot \nabla u dx  dt\\
			&\qquad\qquad-Cs^2 \lambda^4 \iint_Q \left| |x|^\alpha A \nabla  \eta \cdot \nabla \eta\right| ^2 \xi u^2 dx  dt
			- Cs^2 \lambda^3 \int_{0}^{T} \int_{\om}   \xi u^2 dx  dt\\
			&\qquad\qquad-Cs \lambda \int_{0}^{T}\int_\om \xi  |x|^\alpha A\nabla u  \cdot \nabla u dx dt.
		\end{split}
	\end{equation}
	Let us now consider the scalar product, since $\sigma_t\xi \le C\xi^3$, we have
	\begin{equation*}
		\begin{split}
			\left((P_1 u)_1, (P_2 u)_3\right)_{L^2(Q)}
			=&-2s^2\lambda^2 \iint_Q \xi \sigma_t |x|^\alpha A \nabla  \eta \cdot \nabla \eta u^2 dxdt\\
			\ge&-Cs^2\lambda^2 T \iint_Q \xi^3 u^2 dxdt\\
			\ge&-Cs^2\lambda^2 T \int_0^T\int_{\omega} \xi^3 u^2 dxdt
			-Cs^2 \lambda^2 T\iint_Q \left| |x|^\alpha A \nabla  \eta \cdot \nabla \eta\right| ^2 \xi^3 u^2 dx  dt.
		\end{split}
	\end{equation*}
	Obviously, this is absorbed by $A^*$ and $B^*$ if we choose $s$ and $\lambda$ to be sufficiently large.
	Furthermore,
	\begin{equation*}
		\begin{split}
			\left((P_1 u)_2, (P_2 u)_3\right)_{L^2(Q)}
			=&-2s^2\lambda \iint_Q \xi \sigma_t |x|^\alpha A \nabla  \eta \cdot \nabla  u u dxdt\\
			=& s^2\lambda^2 \iint_Q \xi \sigma_t |x|^\alpha A \nabla  \eta \cdot \nabla  \eta u^2 dxdt
			+s^2\lambda \iint_Q \xi  |x|^\alpha A \nabla  \eta  \cdot \nabla \sigma_t u^2 dxdt\\
			&+s^2\lambda \iint_Q \xi  \sigma_t \Div(|x|^\alpha A \nabla  \eta)  u^2 dxdt\\
			=&:E_1 +E_2 +E_3.
		\end{split}
	\end{equation*}
	Using the same way as the previous proof, we have
	\begin{equation*}
		\begin{split}
			E_1
			=& s^2\lambda^2 \iint_Q \xi \sigma_t |x|^\alpha A \nabla  \eta \cdot \nabla  \eta u^2 dxdt\\
			\ge&-Cs^2\lambda^2 T\int_{0}^{T}\int_\om \xi^3 u^2 dxdt
			-Cs^2\lambda^2 T\iint_Q \xi^3 \left| |x|^\alpha A \nabla  \eta \cdot \nabla \eta\right| ^2 u^2 dxdt.
		\end{split}
	\end{equation*}
	For $E_2$ and $E_3$ we have,
	\begin{equation*}
		\begin{split}
			E_2
			=&s^2\lambda \iint_Q \xi \nabla \sigma_t |x|^\alpha A \nabla  \eta  u^2 dxdt\\
			\ge&-Cs^2\lambda^2 T\int_{0}^{T}\int_\om \xi^3 u^2 dxdt
			-Cs^2\lambda^2 T\iint_Q \xi^3 \left| |x|^\alpha A \nabla  \eta \cdot \nabla \eta\right| ^2 u^2 dxdt,
		\end{split}
	\end{equation*}
	and
	\begin{equation*}
		\begin{split}
			E_3
			=&s^2\lambda \iint_Q \xi  \sigma_t \Div(|x|^\alpha A \nabla  \eta)  u^2 dxdt\\
			\ge&-Cs^2\lambda T\int_{0}^{T}\int_\om \xi^3 u^2 dxdt
			-Cs^2\lambda T\iint_Q \xi^3 \left| |x|^\alpha A \nabla  \eta \cdot \nabla \eta\right| ^2 u^2 dxdt.
		\end{split}
	\end{equation*}
	Thus, we have
	\begin{equation*}
		\begin{split}
			\left((P_1 u)_2, (P_2 u)_3\right)_{L^2(Q)}
			\ge&-Cs^2\lambda^2 T\int_{0}^{T}\int_\om \xi^3 u^2 dxdt
			-Cs^2\lambda^2 T\iint_Q \xi^3 \left| |x|^\alpha A \nabla  \eta \cdot \nabla \eta\right| ^2 u^2 dxdt.
		\end{split}
	\end{equation*}
	Finally, we have
	\begin{equation*}
		\begin{split}
			\left((P_1 u)_3, (P_2 u)_3\right)_{L^2(Q)}
			=&s\iint_{Q}\sigma_t u_t u dx dt = -\frac{1}{2} s \iint_{Q}\sigma_{tt} u^2 dx dt\ge-C s T^2\iint_{Q}\xi^3 u^2 dx dt\\
			\ge&-Cs T^2\int_{0}^{T}\int_\om \xi^3 u^2 dxdt
			-Cs T^2\iint_Q \xi^3 \left| |x|^\alpha A \nabla  \eta \cdot \nabla \eta\right| ^2 u^2 dxdt,
		\end{split}
	\end{equation*}
	since $\sigma_{t t}\le \xi^3$.
	
	Now, we can deduce for sufficiently large $\lambda$ and $s$ that
	\begin{equation}\label{P1P23}
		\begin{split}
			\left((P_1 u), (P_2 u)_3\right)_{L^2(Q)}=&\left((P_1 u)_1, (P_2 u)_3\right)_{L^2(Q)} + \left((P_1 u)_2, (P_2 u)_3\right)_{L^2(Q)}+\left((P_1 u)_3, (P_2 u)_3\right)_{L^2(Q)}\\
			\ge&-Cs^2\lambda^3\int_{0}^{T}\int_\om \xi^3 u^2 dxdt
			-Cs^2\lambda^3\iint_Q \xi^3 \left| |x|^\alpha A \nabla  \eta \cdot \nabla \eta\right| ^2 u^2 dxdt.
		\end{split}
	\end{equation}
	Taking into account \eqref{P1P21}, \eqref{P1P22}, and \eqref{P1P23}, we obtain
	\begin{equation*}
		\begin{split}
			\left( P_1 u, P_2 u \right)_{L^2(Q)}
			\ge&Cs^3\lambda^4\iint_{Q} \xi^3 \left| |x|^\alpha A \nabla  \eta \cdot \nabla \eta\right| ^2 u^2 dxdt
			+Cs \lambda^2 \iint_Q \xi  \left| |x|^\alpha A \nabla  u \cdot \nabla \eta\right| ^2  dx  dt\\
			&+C s\lambda^2 \iint_Q \xi  |x|^\alpha A \nabla \eta \cdot \nabla \eta |x|^\alpha A \nabla u \cdot \nabla u dx  dt
			-Cs^3\lambda^4\int_{0}^{T}\int_\om \xi^3 u^2 dxdt\\
			&-Cs \lambda \int_{0}^{T}\int_\om \xi  |x|^\alpha A\nabla u  \cdot \nabla u dx dt.
		\end{split}
	\end{equation*}
	From item ($iii$), for sufficiently large $s$ and $\lambda$, we have
	\begin{equation*}
		\begin{split}
			&\left\| P_1 u \right\| ^2_{L^2(Q)} + \left\| P_2 u \right\| ^2_{L^2(Q)}
			+Cs^3\lambda^4\iint_Q \xi^3 \left| |x|^\alpha A \nabla  \eta \cdot \nabla \eta\right| ^2 u^2 dxdt\\
			&+Cs \lambda^2 \iint_Q \xi  \left| |x|^\alpha A \nabla  u \cdot \nabla \eta\right| ^2  dx  dt
			+C s\lambda^2 \iint_Q \xi  |x|^\alpha A \nabla \eta \cdot \nabla \eta |x|^\alpha A \nabla u \cdot \nabla u dx  dt\\
			&\qquad\le \left\| f_{s,\lambda} \right\| ^2_{L^2(Q)}+ Cs^3\lambda^4\int_{0}^{T}\int_\om \xi^3 u^2 dxdt	
			+Cs \lambda \int_{0}^{T}\int_\om \xi  |x|^\alpha A\nabla u  \cdot \nabla u dx dt\\
			&\qquad\le \left\| e^{-s\sigma} f \right\| ^2_{L^2(Q)}
			+ s^2\lambda^4\iint_{Q} \xi^3 \left| |x|^\alpha A \nabla  \eta \cdot \nabla \eta\right| ^2 u^2 dxdt
			+s^2\lambda^2\iint_{Q} \xi^3 \left| \Div(|x|^\alpha A \nabla  \eta)\right| ^2 u^2 dxdt\\
			&\qquad\qquad+ Cs^3\lambda^4\int_{0}^{T}\int_\om \xi^3 u^2 dxdt	
			+Cs \lambda \int_{0}^{T}\int_\om \xi  |x|^\alpha A\nabla u  \cdot \nabla u dx dt.
		\end{split}
	\end{equation*}
	Thus, we also have
	\begin{equation}\label{P1P2}
		\begin{split}
			&\left\| P_1 u \right\| ^2_{L^2(Q)} + \left\| P_2 u \right\| ^2_{L^2(Q)}
			+Cs^3\lambda^4\iint_Q \xi^3 \left| |x|^\alpha A \nabla  \eta \cdot \nabla \eta\right| ^2 u^2 dxdt\\
			&+Cs \lambda^2 \iint_Q \xi  \left| |x|^\alpha A \nabla  u \cdot \nabla \eta\right| ^2  dx  dt
			+C s\lambda^2 \iint_Q \xi  |x|^\alpha A \nabla \eta \cdot \nabla \eta |x|^\alpha A \nabla u \cdot \nabla u dx  dt\\
			&\qquad\le \left\| e^{-s\sigma} f \right\| ^2_{L^2(Q)}
			+ Cs^3\lambda^4\int_{0}^{T}\int_\om \xi^3 u^2 dxdt	
			+Cs \lambda \int_{0}^{T}\int_\om \xi  |x|^\alpha A\nabla u  \cdot \nabla u dx dt.
		\end{split}
	\end{equation}
	The final step will be to add integrals of $\left|  \Div(|x|^\alpha A \nabla  u)\right|  ^2$ and $\left|  u_t\right|  ^2$ to the left-hand side of \eqref{P1P2}. This can be made using the expressions
	of $P_1 u$ and $P_2 u$. Indeed, from item $(iii)$ we have
	\begin{equation*}
		\begin{split}
			&s^{-1} \iint_Q \xi^{-1}\left|u_t\right|^2dx  d t\\
			&\qquad=s^{-1} \iint_Q \xi^{-1}(P_1 u + 2s \lambda^2 \xi u |x|^\alpha A \nabla  \eta \cdot \nabla \eta   + 2s \lambda \xi  |x|^\alpha A \nabla  u \cdot \nabla \eta)^2dx  d t\\
			&\qquad\le Cs^{-1}\left\|P_1 u\right\|^2+ C s\lambda^4\iint_Q \xi \left| |x|^\alpha A \nabla  \eta \cdot \nabla \eta\right| ^2 u^2 dxdt
			+C s\lambda^2\iint_Q \xi \left| |x|^\alpha A \nabla  u \cdot \nabla \eta\right| ^2 dxdt,
		\end{split}
	\end{equation*}
	and
	\begin{equation*}
		\begin{split}
			&s^{-1}   \iint_Q \xi^{-1}\left|\Div(|x|^\alpha A\nabla u)\right|^2dx  d t\\
			&\qquad= s^{-1}   \iint_Q \xi^{-1}(P_2 u -s^2\lambda^2\xi^2 u |x|^\alpha A \nabla \eta \cdot  \nabla \eta - s \sigma_t u)^2dx  d t\\
			&\qquad\le C s^{-1} \left\|P_2 u\right\|^2+C s^3 \lambda^4   \iint_Q \xi^3| |x|^\alpha A \nabla \eta\cdot \nabla \eta |^2|u|^2dx  d t +C sT^2   \iint_Q \xi^{3}|u|^2dx  d t.
		\end{split}
	\end{equation*}
	Accordingly, we deduce from \eqref{P1P2} that
	\begin{equation}\label{P1UP2U}
		\begin{split}
			&Cs^{-1} \iint_Q \xi^{-1}\left|u_t\right|^2dx  d t +Cs^{-1}   \iint_Q \xi^{-1}\left|\Div(|x|^\alpha A\nabla u)\right|^2dx  d t\\
			&+Cs^3\lambda^4\iint_Q \xi^3 \left| |x|^\alpha A \nabla  \eta \cdot \nabla \eta\right| ^2 u^2 dxdt
			+Cs \lambda^2 \iint_Q \xi  \left| |x|^\alpha A \nabla  u \cdot \nabla \eta\right| ^2  dx  dt\\
			&+C s\lambda^2 \iint_Q \xi |x|^\alpha A \nabla \eta \cdot \nabla \eta |x|^\alpha A \nabla u \cdot \nabla u dx  dt\\
			&\qquad\le \left\| e^{-s\sigma} f \right\| ^2_{L^2(Q)}
			+ Cs^3\lambda^4\int_{0}^{T}\int_\om \xi^3 u^2 dxdt	
			+Cs \lambda \int_{0}^{T}\int_\om \xi  |x|^\alpha A\nabla u  \cdot \nabla u dx dt,
		\end{split}
	\end{equation}
	for sufficiently large $s$ and $\lambda$.
	It is worth noting that, by Lemma \ref{ETA2} and Lemma \ref{EPSILON},  we have
	\begin{equation*}
		\begin{split}
			&C s\lambda^2 \iint_Q \xi  |x|^\alpha A \nabla \eta \cdot \nabla \eta |x|^\alpha A \nabla u \cdot \nabla u dx  dt\\
			&\qquad=C s\lambda^2 \int_0^T \int_{\Om\backslash(\Om\cup \Om_\epsilon)} \xi  |x|^\alpha A \nabla \eta \cdot \nabla \eta |x|^\alpha A \nabla u \cdot \nabla u dx  dt\\
			&\qquad\qquad+ C s\lambda^2 \int_0^T \int_{\Om_\epsilon} \xi  |x|^\alpha A \nabla \eta \cdot \nabla \eta |x|^\alpha A \nabla u \cdot \nabla u dx  dt\\
			&\qquad\qquad+ C s\lambda^2 \int_0^T \int_{\om} \xi  |x|^\alpha A \nabla \eta \cdot \nabla \eta |x|^\alpha A \nabla u \cdot \nabla u dx  dt\\
			&\qquad\ge C s\lambda^2 \int_0^T \int_{\Om\backslash(\Om\cup \Om_\epsilon)} \xi  |x|^\alpha A \nabla u \cdot \nabla u dx  dt
			+ C s\lambda^2 \int_0^T \int_{\om} \xi  |x|^\alpha A \nabla \eta \cdot \nabla \eta |x|^\alpha A \nabla u \cdot \nabla u dx  dt\\
			&\qquad\ge \frac{1}{2}C s\lambda^2 \int_0^T \int_{\Om\backslash(\Om\cup \Om_\epsilon)} \xi  |x|^\alpha A \nabla u \cdot \nabla u dx  dt
			+ \frac{1}{2}C s\lambda^2 \int_0^T \int_{\Om_\epsilon} \xi   |x|^\alpha A \nabla u \cdot \nabla u dx  dt\\
			&\qquad\qquad +C s\lambda^2 \int_0^T \int_{\om} \xi  |x|^\alpha A \nabla \eta \cdot \nabla \eta |x|^\alpha A \nabla u \cdot \nabla u dx  dt\\
			&\qquad\ge C s\lambda^2 \int_0^T \int_{\Om\backslash\om} \xi   |x|^\alpha A \nabla u \cdot \nabla u dx  dt + C s\lambda^2 \int_0^T \int_{\om} \xi  |x|^\alpha A \nabla \eta \cdot \nabla \eta |x|^\alpha A \nabla u \cdot \nabla u dx  dt\\
			&\qquad\ge C s\lambda^2 \iint_Q \xi   |x|^\alpha A \nabla u \cdot \nabla u dx  dt - C s\lambda^2 \int_0^T \int_{\om} \xi  |x|^\alpha A \nabla u \cdot \nabla u dx  dt,
		\end{split}
	\end{equation*}
	and
	\begin{equation*}
		\begin{split}
			&Cs^3\lambda^4\iint_Q \xi^3 \left| |x|^\alpha A \nabla  \eta \cdot \nabla \eta\right| ^2 u^2 dxdt
			\ge Cs^3\lambda^4\iint_Q \xi^3  u^2 dxdt -Cs^3\lambda^4\int_0^T \int_{\om} \xi^3  u^2 dxdt.
		\end{split}
	\end{equation*}
	Hence, \eqref{P1UP2U}  can be written as
	\begin{equation}
		\begin{split}
			&Cs^{-1} \iint_Q \xi^{-1}\left|u_t\right|^2dx  d t +Cs^{-1}   \iint_Q \xi^{-1}\left|\Div(|x|^\alpha A\nabla u)\right|^2dx  d t\\
			&+Cs^3\lambda^4\iint_Q \xi^3  u^2 dxdt
			+Cs \lambda^2 \iint_Q \xi  \left| |x|^\alpha A \nabla  u \cdot \nabla \eta\right| ^2  dx  dt\\
			&+C s\lambda^2 \iint_Q \xi  |x|^\alpha A \nabla u \cdot \nabla u dx  dt\\
			&\qquad\le \left\| e^{-s\sigma} f \right\| ^2_{L^2(Q)}
			+ Cs^3\lambda^4\int_{0}^{T}\int_\om \xi^3 u^2 dxdt	
			+Cs \lambda^2 \int_{0}^{T}\int_\om \xi  |x|^\alpha A\nabla u  \cdot \nabla u dx dt,
		\end{split}
	\end{equation}
	We are now ready to eliminate the second integral on the right-hand side. To this end, let us introduce a function $\gamma=\gamma(x)$, with
	\begin{equation*}
		\gamma\in C_0^2(\omega_0), \quad 0\le \gamma\le 1,\quad \gamma\equiv 1 \mbox{ in } \omega, \quad \gamma\equiv 0 \mbox{ outside } \om+\Om_\epsilon=\{x+y\mid x\in\om, y\in\Om_\epsilon\}, 
	\end{equation*}
	and let us make some computations. We have
	\begin{equation*}
		\begin{split}
			s \lambda^2 \int_{0}^{T}\int_\om \xi |x|^\alpha A \nabla u \cdot \nabla udx  d t\le s \lambda^2 \int_{0}^{T}\int_{\om_0} \gamma \xi |x|^\alpha A \nabla u \cdot \nabla udx  d t,
		\end{split}
	\end{equation*}
	and
	\begin{equation*}
		\begin{split}
			&s \lambda^2 \int_{0}^{T}\int_{\om_0}\gamma \xi |x|^\alpha A \nabla u \cdot \nabla udx  d t\\
			&\qquad=- s \lambda^2 \int_{0}^{T}\int_{\om_0} \Div(\gamma\xi |x|^\alpha A\nabla u) udx  d t\\
			&\qquad=- s \lambda^3 \int_{0}^{T}\int_{\om_0} \gamma\xi u |x|^\alpha A\nabla u \cdot \nabla \eta dx  d t - s \lambda^2 \int_{0}^{T}\int_{\om_0}\gamma \xi\Div( |x|^\alpha A\nabla u) udx  d t\\
			&\qquad\qquad- s \lambda^2 \int_{0}^{T}\int_{\om_0}  \xi |x|^\alpha A\nabla u\cdot\nabla\gamma udx  d t
			\\
			&\qquad\le C \lambda^2 \int_{0}^{T}\int_{\om_0} \xi \left| |x|^\alpha A \nabla  u \cdot \nabla \eta\right| ^2 dx  d t + Cs^2 \lambda^4 \int_{0}^{T}\int_{\om_0} \xi u^2dx  d t\\
			&\qquad\qquad+C \delta s^{-1}  \int_{0}^{T}\int_{\om_0} \xi^{-1} \left| \Div( |x|^\alpha A\nabla u)\right| ^2dx  d t + C \frac{1}{\delta}s^3 \lambda^4 \int_{0}^{T}\int_{\om_0} \xi^3 u^2dx  d t\\
			&\qquad\qquad+C \int_{0}^{T}\int_{\om_0}  |x|^\alpha A \nabla u \cdot \nabla u  dx  d t + Cs^2 \lambda^4 \int_{0}^{T}\int_{\om_0}  \xi^2 u^2 dx  d t.
		\end{split}
	\end{equation*}
	Thus, for $\delta$ that is small enough, the term $s \lambda^2 \int_{0}^{T}\int_{\om} \xi |x|^\alpha A \nabla u \cdot \nabla udx d t$ can be absorbed by the other terms. 
	
	This implies that for sufficiently large $s$ and $\lambda$, we have
	\begin{equation*}
		\begin{split}
			&Cs^{-1} \iint_Q \xi^{-1}\left|u_t\right|^2dx  d t +Cs^{-1}   \iint_Q \xi^{-1}\left|\Div(|x|^\alpha A\nabla u)\right|^2dx  d t\\
			&+Cs^3\lambda^4\iint_Q \xi^3  u^2 dxdt
			+Cs \lambda^2 \iint_Q \xi  \left| |x|^\alpha A \nabla  u \cdot \nabla \eta\right| ^2  dx  dt\\
			&+C s\lambda^2 \iint_Q \xi   |x|^\alpha A \nabla u \cdot \nabla u dx  dt\\
			&\qquad\le \left\| e^{-s\sigma} f \right\| ^2_{L^2(Q)}
			+ Cs^3\lambda^4\int_{0}^{T}\int_{\om_0} \xi^3 u^2 dxdt.
		\end{split}
	\end{equation*}
	By employing classical arguments, we can then revert back to the original variable $w$ and conclude the result.
	$\hfill\blacksquare$
	
	\section{Appendix}				
	{\bf Proof of Theorem \ref{b1}:}
	Assuming  $ z_{m} $  has the structure \eqref{a5}.  Since  $ \{w_{k}\}^{\infty}_{k=1} $ is an orthogonal basis of  $ \mcH_0^1(\Omega) $ and  $ L^{2}(\Omega) $,  we obtain
	\begin{equation*}\label{a9}
		\left(\frac{d}{dt}z_{m}(t), w_{k}\right)=\frac{d}{dt}d_{m}^{k}(t). 
	\end{equation*}
	Furthermore, 
	\begin{equation*}\label{a10}
		B[z_{m}, w_{k};t]=\sum_{l=1}^{m}e^{kl}(t)d_{m}^{l}(t), 
	\end{equation*}
	for  $ e^{kl}(t):=B[w_{l}, w_{k};t],  k, l=1, \cdots,  m$.  Let us further write  $ g^{k}(t):=(\chi_{\omega_0}g, w_{k}),  k=1, \cdots, m$.  Then \eqref{a7} becomes the linear system of ODEs
	\begin{equation}\label{a11}
		\frac{d}{dt}d_{m}^{k}(t)+\sum_{l=1}^{m}e^{kl}(t)d_{m}^{l}(t)=g^{k}(t),  \quad k=1, \cdots, m, 
	\end{equation}
	subject to the initial conditions \eqref{a6}.  Since the standard existence theory for ordinary differential equations,  there exists a unique absolutely continuous function  $ d_{m}(t)=(d_{m}^{1}(t), \cdots, d_{m}^{m}(t)) $  satisfying \eqref{a6} and \eqref{a11} for  $t\in(0, T)$ a.e.  And then  $ z_{m} $  defined by \eqref{a5} solves \eqref{a7} for   $t\in(0, T)$ a.e.
	$\hfill\blacksquare$
	
	{\bf Proof of Theorem \ref{b2}:}
	Multiplying equation \eqref{a7} by  $ \frac{d}{dt}d_{m}^{k}(t) $,  sum for $k=1,\cdots, m$,  and then recall \eqref{a5} to find
	\begin{equation}\label{a13}
		\left(\frac{d}{dt}z_{m}, z_{m}\right)+B[z_{m}, z_{m};t] =(\chi_{\omega_0}g, z_{m}),
	\end{equation}
	for  $0\leq t \leq T $ a.e. Since
	\begin{equation*}\label{a14}
		B[z_{m}, z_{m};t]=\int_{\Omega} |x|^\alpha A\nabla z_{m}\cdot \nabla  z_{m} dx,
	\end{equation*}
	and
	\begin{equation*}
		\|z_{m}\|_{\mcH_0^1(\Omega)}=\left(\int_{\Omega} |x|^\alpha A\nabla z_{m}\cdot \nabla  z_{m} dx\right)^{\frac{1}{2}}. 
	\end{equation*}	
	This implies
	\begin{equation}\label{a15}
		\begin{split}
			B[z_{m}, z_{m};t]
			=\|z_{m}\|_{\mcH_0^1(\Omega)}^{2},
		\end{split}
	\end{equation}
	and for a.e.\! $t\in(0, T)$ and all $m\in\mathbb{N}$,
	\begin{equation*}\label{a16}
		|(\chi_{\omega_0}g, z_{m})| \leq \frac{1}{2}\|\chi_{\omega_0}g \|_{L^{2}(\Omega)}^2+\frac{1}{2}\|z_{m} \|_{L^{2}(\Omega)}^2,  
	\end{equation*}
	and for a.e.\! $t\in (0,T)$ we have
	\begin{equation*}\label{a17}
		\left(\frac{d}{dt}z_{m}, z_{m}\right)=\frac{d}{dt}\left(\frac{1}{2}\|z_{m} \|_{L^{2}(\Omega)}^2\right).
	\end{equation*} 
	Consequently \eqref{a13} yields the inequality 
	\begin{equation}\label{a18}
		\frac{d}{dt}\left(\frac{1}{2}\|z_{m} \|_{L^{2}(\Omega)}^2\right) + \|z_{m}\|_{\mcH_0^1(\Omega)}^2 \leq C\|z_{m} \|_{L^{2}(\Omega)}^2 + C\|\chi_{\omega_0}g \|_{L^{2}(\Omega)}^2
	\end{equation}
	for a.e.\! $t\in (0,T)$, hereinafter, we denote $C>0$ are different constants by context. 
	
	Now writting
	\begin{alignat}{2}
		\eta(t)
		:=\|z_{m}(t) \|_{L^{2}(\Omega)}^2, \quad 
		\xi(t)
		:=\|\chi_{\omega_0}g(t) \|_{L^{2}(\Omega)}^2.  \nonumber
	\end{alignat} 
	Then \eqref{a18} imples 
	\begin{equation*}\label{a21}
		\frac{d}{dt}\eta(t)\leq C\eta(t)+C\xi(t)
	\end{equation*}
	for a.e.\! $t\in(0, T)$. Thus the differential form of Gronwall's inequality yields the estimate
	\begin{equation}\label{a22}
		\eta(t)\leq e^{Ct}\eta(0)+C\int_{0}^{t}\xi(s)  ds \mbox{ for all } t\in(0, T). 
	\end{equation} 
	Since  $ \eta(0)=\|z_{m}(0) \|_{L^{2}(\Omega)} \leq \|z_{0} \|_{L^{2}(\Omega)} $  by \eqref{a6},  we obtain from  \eqref{a22} the estimate 
	\begin{equation}\label{a23}
		\max\limits_{t\in(0, T)} \|z_{m}(t) \|_{L^{2}(\Omega)} \leq C \left(\|\chi_{\omega_0}g \|_{L^{2}(0, T;L^{2}(\Omega))}+\|z_{0} \|_{L^{2}(\Omega)}\right). 
	\end{equation}
	
	Returning once more to inequality \eqref{a18},  we integrate from $0$ to $T$ and employ the inequality above to find 
	\begin{equation}\label{a24}
		\|z_{m} \|_{L^{2}(0, T;\mcH_0^1(\Omega))}^2= \int_{0}^{T} \|z_{m} \|_{\mcH_0^1(\Omega)}^2 dt \leq   C \left(\|\chi_{\omega_0}g \|_{L^{2}(0, T;L^{2}(\Omega))}^2+\|z_{0} \|_{L^{2}(\Omega)}^2\right). 
	\end{equation}
	
	Fix any  $ v\in \mcH_0^1(\Omega) $ with  $ \|v\|_{\mcH_0^1(\Omega)}\leq 1 $,  and write  $ v=v^{1}+v^{2} $,  where  $ v^{1}\in {\rm span}\{w_{k}\}^{m}_{k=1} $  and  $ (v^{2}, w_{k})=0,  k=1, \cdots,  m$. 
	Since $ \{w_{k}\}^{\infty}_{k=1} $  are orthogonal in $ \mcH_0^1(\Omega)$,  we have $\|v^{1}\|_{\mcH_0^1(\Omega)} \leq \|v\|_{\mcH_0^1(\Omega)}\leq 1 $. 
	Utiliuing \eqref{a7},  we deduce for  $t\in(0, T)$ a.e. \!  that
	\begin{equation*}\label{a25}
		\left(\frac{d}{dt}z_{m}, v^{1}\right)+B[z_{m}, v^{1};t] =(\chi_{\omega_0}g, v^{1}). 
	\end{equation*} 
	Then \eqref{a5} implies that
	\begin{equation*}\label{a26}
		\left\langle \frac{d}{dt}z_{m}, v \right\rangle_{\left\langle \mathcal{H}^{-1}(\Omega),\mathcal{H}_0^{1}(\Omega)\right\rangle }=\left( \frac{d}{dt}z_{m}, v \right)  =\left(\frac{d}{dt}z_{m}, v^{1}\right)=(\chi_{\omega_0}g, v^{1})-B[z_{m}, v^{1};t]. 
	\end{equation*}
	Consequently,
	\begin{equation*}\label{a27}
		\left|\left(\frac{d}{dt}z_{m}, v\right)\right|\leq C \left(\|\chi_{\omega_0}g \|_{L^{2}(\Omega)}+\|z_{m}\|_{\mcH_0^1(\Omega)}\right), 
	\end{equation*}
	since  $ \|v^{1}\|_{\mcH_0^1(\Omega)} \leq 1 $ and \eqref{a15}.  
	Thus
	\begin{equation*}\label{a28}
		\left\|\frac{d}{dt}z_{m}\right\|_{\mathcal{H}^{-1}(\Omega)} \leq C \left(\|\chi_{\omega_0}g \|_{L^{2}(\Omega)}+\|z_{m}\|_{\mcH_0^1(\Omega)}\right), 
	\end{equation*}
	and therefore
	\begin{equation}\label{a29}
		\begin{split}
			\int_{0}^{T} \left\|\frac{d}{dt}z_{m}\right\|_{\mcH^{-1}(\Omega)}^2  dt \le& \int_{0}^{T} \left(\|\chi_{\omega_0}g \|_{L^{2}(\Omega)}^{2}+\|z_{m}\|_{\mcH_0^1(\Omega)}^{2}\right)  dt\\
			\le&
			C \left(\|\chi_{\omega_0}g \|_{L^{2}(0, T;L^{2}(\Omega))}+\|z_{0} \|_{L^{2}(\Omega)}\right).
		\end{split}
	\end{equation}
	
	Finally,  combining \eqref{a23},  \eqref{a24} and \eqref{a29},  we obtain \eqref{a12}.  
	$\hfill\blacksquare$
	\begin{lemma}\label{b9}
		Assume
		\begin{equation*}
			\begin{cases}
				z_{k} \rightharpoonup z ,  & \mbox{in } {L^{2}(0,T;\mathcal{H}_0^{1}(\Omega))}; \\
				\frac{d}{dt}z_{k} \rightharpoonup  v ,  & \mbox{in } {L^{2}(0,T;\mathcal{H}^{-1}(\Omega))}. 
			\end{cases}
		\end{equation*}
		We have $\frac{d}{dt}z=v $ in ${L^{2}(0,T;\mathcal{H}^{-1}(\Omega))}$.
	\end{lemma}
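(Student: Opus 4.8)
The plan is to show that $v$ is the weak (distributional) time derivative of $z$ by testing the defining identity of $\frac{d}{dt}z_k$ against product functions $\phi(t)\psi$ and then passing to the weak limits. I will use throughout that $L^{2}(0,T;\mathcal{H}_0^{1}(\Omega))$ and $L^{2}(0,T;\mathcal{H}^{-1}(\Omega))$ are reflexive Hilbert spaces dual to one another, and that $\mathcal{H}_0^{1}(\Omega)\hookrightarrow L^{2}(\Omega)\hookrightarrow\mathcal{H}^{-1}(\Omega)$ is a Gelfand triple, so that for $u\in\mathcal{H}_0^1(\Omega)$ and $\psi\in\mathcal{H}_0^1(\Omega)$ one has $\langle u,\psi\rangle_{\mathcal{H}^{-1},\mathcal{H}_0^1}=(u,\psi)_{L^2(\Omega)}$.

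Fix $\phi\in C_c^\infty((0,T))$ and $\psi\in\mathcal{H}_0^1(\Omega)$. For each $k$, the definition of $\frac{d}{dt}z_k\in L^2(0,T;\mathcal{H}^{-1}(\Omega))$, together with the Gelfand-triple identity above, yields
\begin{equation*}
\int_{0}^{T}\left(z_k(t),\psi\right)_{L^2(\Omega)}\phi'(t)\,dt=-\int_{0}^{T}\left\langle \tfrac{d}{dt}z_k(t),\psi\right\rangle_{\mathcal{H}^{-1},\mathcal{H}_0^1}\phi(t)\,dt.
\end{equation*}
First I would pass to the limit on the left: the map $u\mapsto\int_0^T(u(t),\psi)_{L^2(\Omega)}\phi'(t)\,dt$ is a bounded linear functional on $L^{2}(0,T;\mathcal{H}_0^{1}(\Omega))$, because $t\mapsto\phi'(t)\psi$ lies in its dual $L^{2}(0,T;\mathcal{H}^{-1}(\Omega))$ (here $\psi\in L^2(\Omega)\hookrightarrow\mathcal{H}^{-1}(\Omega)$). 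Hence $z_k\rightharpoonup z$ in $L^{2}(0,T;\mathcal{H}_0^{1}(\Omega))$ gives convergence of the left-hand side to $\int_0^T(z(t),\psi)_{L^2(\Omega)}\phi'(t)\,dt$.

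Next I would pass to the limit on the right: the map $F\mapsto\int_0^T\langle F(t),\psi\rangle_{\mathcal{H}^{-1},\mathcal{H}_0^1}\phi(t)\,dt$ is a bounded linear functional on $L^{2}(0,T;\mathcal{H}^{-1}(\Omega))$, since $t\mapsto\phi(t)\psi$ belongs to the predual $L^{2}(0,T;\mathcal{H}_0^{1}(\Omega))$. Thus $\frac{d}{dt}z_k\rightharpoonup v$ in $L^{2}(0,T;\mathcal{H}^{-1}(\Omega))$ gives convergence of the right-hand side to $-\int_0^T\langle v(t),\psi\rangle_{\mathcal{H}^{-1},\mathcal{H}_0^1}\phi(t)\,dt$. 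Combining the two limits,
\begin{equation*}
\int_{0}^{T}\left(z(t),\psi\right)_{L^2(\Omega)}\phi'(t)\,dt=-\int_{0}^{T}\left\langle v(t),\psi\right\rangle_{\mathcal{H}^{-1},\mathcal{H}_0^1}\phi(t)\,dt
\end{equation*}
for every $\psi\in\mathcal{H}_0^1(\Omega)$ and every $\phi\in C_c^\infty((0,T))$.

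Finally, since elements of $\mathcal{H}^{-1}(\Omega)=(\mathcal{H}_0^1(\Omega))^{*}$ are determined by their action on $\mathcal{H}_0^1(\Omega)$, this last identity is exactly the statement that $\int_0^T z(t)\phi'(t)\,dt=-\int_0^T v(t)\phi(t)\,dt$ in $\mathcal{H}^{-1}(\Omega)$ for all $\phi\in C_c^\infty((0,T))$; that is, $\frac{d}{dt}z=v$ as an $\mathcal{H}^{-1}(\Omega)$-valued distribution, and since $v\in L^2(0,T;\mathcal{H}^{-1}(\Omega))$ this is precisely $\frac{d}{dt}z=v$ in $L^{2}(0,T;\mathcal{H}^{-1}(\Omega))$. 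The only delicate point is the bookkeeping of the Gelfand triple, which forces the same $\psi$ to be paired through the $L^2(\Omega)$ inner product on one side and through the duality between $\mathcal{H}^{-1}(\Omega)$ and $\mathcal{H}_0^1(\Omega)$ on the other; once the test functions $\phi'\psi$ and $\phi\psi$ are correctly identified as elements of the relevant (pre)dual spaces, each limit passage is immediate from the assumed weak convergences.
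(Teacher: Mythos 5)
Your proof is correct and takes essentially the same route as the paper's: both test against product functions $\phi(t)\psi$ with $\phi\in C_c^\infty((0,T))$ and $\psi\in\mathcal{H}_0^1(\Omega)$, invoke the defining identity of the weak time derivative of $z_k$, and pass to the limit on each side using the two assumed weak convergences before concluding via the definition of the $\mathcal{H}^{-1}(\Omega)$-valued weak derivative. If anything, your write-up is slightly more careful than the paper's, since you explicitly check that $t\mapsto\phi'(t)\psi$ and $t\mapsto\phi(t)\psi$ belong to the relevant (pre)dual spaces so that each limit passage is legitimate, a point the paper leaves implicit.
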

	\begin{proof}
		Claim: for all $\phi\in C^{\infty}_{0}(0,T)$, and for each $w\in \mathcal{H}_0^{1}(\Omega)$, we have
		\begin{equation*}
			\left\langle\int_{0}^{T}\left(\frac{d}{dt}\phi(t)\right) z(t)  dt,w\right\rangle_{\left\langle \mathcal{H}^{-1}(\Omega),\mathcal{H}_0^{1}(\Omega)\right\rangle }=\left\langle-\int_{0}^{T}v(t)\phi(t) dt,w\right\rangle_{\left\langle \mathcal{H}^{-1}(\Omega),\mathcal{H}_0^{1}(\Omega)\right\rangle }.
		\end{equation*}
		If the claim is proved, we have
		\begin{equation*}
			\int_{0}^{T}\left(\frac{d}{dt}\phi(t)\right) z(t)  dt=-\int_{0}^{T}v(t)\phi(t)   dt\ \mbox{ in }\ \mathcal{H}^{-1}(\Omega).
		\end{equation*}
		By the definition of weak derivative, we can imply
		\begin{equation*}
			\int_{0}^{T}\left(\frac{d}{dt}\phi(t)\right) z(t)  dt=-\int_{0}^{T}\left(\frac{d}{dt}z(t)\right)\phi(t)   dt \ \mbox{ in }\ \mathcal{H}^{-1}(\Omega).
		\end{equation*}
		Then we have $\frac{d}{dt}z=v $ in ${L^{2}(0,T;\mathcal{H}^{-1}(\Omega))}$.
		So we just need to prove the claim. 
		
		Notice that
		\begin{equation*}
			\begin{split}
				\left\langle\int_{0}^{T}\left(\frac{d}{dt}\phi(t)\right) z(t)  dt,w\right\rangle_{\left\langle \mathcal{H}^{-1}(\Omega),\mathcal{H}_0^{1}(\Omega)\right\rangle }
				&=\int_{0}^{T}\left\langle\left(\frac{d}{dt}\phi(t)\right) z(t),w \right\rangle_{\left\langle \mathcal{H}^{-1}(\Omega),\mathcal{H}_0^{1}(\Omega)\right\rangle }  dt\\
				&=\int_{0}^{T} \left\langle z(t),\left(\frac{d}{dt}\phi(t)\right)w \right\rangle_{\left\langle \mathcal{H}^{-1}(\Omega),\mathcal{H}_0^{1}(\Omega)\right\rangle } \ dt.
			\end{split}
		\end{equation*}
		Since $z_{k} \rightharpoonup z$ in $  {L^{2}(0,T;\mathcal{H}_0^{1})}$, we have
		\begin{equation*}
			\begin{split}
				&\int_{0}^{T} \left\langle z(t),\left(\frac{d}{dt}\phi(t)\right)w\right\rangle_{\left\langle \mathcal{H}^{-1}(\Omega),\mathcal{H}_0^{1}(\Omega)\right\rangle } dt\\
				&\qquad=\lim\limits_{k \rightarrow \infty} \int_{0}^{T} \left\langle z_{k}(t),\left(\frac{d}{dt}\phi(t)\right)w \right\rangle_{\left\langle \mathcal{H}^{-1}(\Omega),\mathcal{H}_0^{1}(\Omega)\right\rangle }  dt\\
				&\qquad=\lim\limits_{k \rightarrow \infty} \int_{0}^{T} \left\langle z_{k}(t)\frac{d}{dt}\phi(t)  ,w \right\rangle_{\left\langle \mathcal{H}^{-1}(\Omega),\mathcal{H}_0^{1}(\Omega)\right\rangle }  dt\\
				&\qquad=\lim\limits_{k \rightarrow \infty} \left\langle \int_{0}^{T} z_{k}(t)\frac{d}{dt}\phi(t)  dt  ,w \right\rangle_{\left\langle \mathcal{H}^{-1}(\Omega),\mathcal{H}_0^{1}(\Omega)\right\rangle }\\
				&\qquad=-\lim\limits_{k \rightarrow \infty} \left\langle \int_{0}^{T} \left(\frac{d}{dt}z_{k}(t)\right)\phi(t) \ dt  ,w \right\rangle_{\left\langle \mathcal{H}^{-1}(\Omega),\mathcal{H}_0^{1}(\Omega)\right\rangle } \\
				&\qquad=-\lim\limits_{k \rightarrow \infty} \int_{0}^{T} \left\langle \left(\frac{d}{dt}z_{k}(t)\right)\phi(t)  ,w \right\rangle_{\left\langle \mathcal{H}^{-1}(\Omega),\mathcal{H}_0^{1}(\Omega)\right\rangle } dt.
			\end{split}
		\end{equation*}
		Since $\frac{d}{dt}z_{k} \rightharpoonup v$ in $  {L^{2}(0,T;\mathcal{H}^{1})}$, we have
		\begin{equation*}
			\begin{split}
				&-\lim\limits_{k \rightarrow \infty} \int_{0}^{T} \left\langle  \left(\frac{d}{dt}z_{k}(t)\right)\phi(t)  ,w \right\rangle_{\left\langle \mathcal{H}^{-1}(\Omega),\mathcal{H}_0^{1}(\Omega)\right\rangle } dt\\
				&\qquad=-\int_{0}^{T} \left\langle v(t)\phi(t)  ,w \right\rangle_{\left\langle \mathcal{H}^{-1}(\Omega),\mathcal{H}_0^{1}(\Omega)\right\rangle } dt=\left\langle-\int_{0}^{T}v(t)\phi(t) dt, w\right\rangle_{\left\langle \mathcal{H}^{-1}(\Omega),\mathcal{H}_0^{1}(\Omega)\right\rangle }.
			\end{split}
		\end{equation*} 
		Hence
		\begin{equation*}
			\left\langle\int_{0}^{T}\left(\frac{d}{dt}\phi(t) \right)z(t)  dt,w\right\rangle_{\left\langle \mathcal{H}^{-1}(\Omega),\mathcal{H}_0^{1}(\Omega)\right\rangle }=\left\langle-\int_{0}^{T}v(t)\phi(t)  dt,w\right\rangle_{\left\langle \mathcal{H}^{-1}(\Omega),\mathcal{H}_0^{1}(\Omega)\right\rangle }.
		\end{equation*}
		Then the claim is proved.
	\end{proof}
	{\bf Proof of Theorem \ref{b3}:}
	Since the energy estimates \eqref{a12},  we see that the sequence  $ \{z_{m}\}^{\infty}_{m=1} $ 	is bounded in  $ {L^{2}(0,T;\mathcal{H}_0^{1}(\Omega))} $,  and  $ \{\frac{d}{dt}z_{m}\}^{\infty}_{m=1} $  is bounded in  $ {L^{2}(0,T;\mathcal{H}^{-1}(\Om))} $.  From Lemma \ref{b9}, consequently there exists a subsequence  $ \{z_{m_l}\}^{\infty}_{l=1} \subset \{z_{m}\}^{\infty}_{m=1} $  and a function  $ z \in L^{2}(0,T;\mathcal{H}_0^{1}(\Om)) $,  with  $ \frac{d}{dt}z \in L^{2}(0,T;\mathcal{H}^{-1}(\Om)) $,  such that
	\begin{equation}\label{a30}
		\begin{cases}
			z_{m_l} \rightharpoonup z ;  & \mbox{in } {L^{2}(0,T;\mathcal{H}_0^{1}(\Om))}, \\
			\frac{d}{dt}z_{m_l} \rightharpoonup \frac{d}{dt}z ,  & \mbox{in } {L^{2}(0,T;\mathcal{H}^{-1}(\Om))}. 
		\end{cases}
	\end{equation}
	
	Next,  fixing an integer $N\in\mathbb{N}$ and choosing a function  $ v\in C^{1}([0,T];\mathcal{H}_0^{1}) $  with the form
	\begin{equation}\label{a31}
		v(t):=\sum_{k=1}^{N}d_{m}^{k}(t)w_{k}, 
	\end{equation}
	where  $ \{d^{k}(t)\}^{N}_{k=1}$  are given smooth functions in \eqref{a11}.  We choose  $ m\ge N $,  multiplying \eqref{a7} by   $ d_{m}^{k}(t) $,  sum $k=1, \cdots, N$,  and then integrate with respect to $t$ to find
	\begin{equation}\label{a32}
		\int_{0}^{T}\left[\left\langle \frac{d}{dt}z_{m}, v \right\rangle_{\left\langle \mcH^{-1}(\Om), \mcH_0^1(\Om)\right\rangle } +B[z_{m}, v;t]\right] dt =\int_{0}^{T} (\chi_{\omega_0}g, v) dt. 
	\end{equation} 
	We set  $ m=m_l $  and recall \eqref{a30},  to find upon passing to weak limits that
	\begin{equation}\label{a33}
		\int_{0}^{T}\left[\left\langle \frac{d}{dt}z, v \right\rangle_{\left\langle \mcH^{-1}(\Om), \mcH_0^1(\Om)\right\rangle } +B[z, v;t] \right]dt =\int_{0}^{T} (\chi_{\omega_0}g, v) dt. 
	\end{equation} 
	This equality then holds for all functions  $ v\in L^{2}(0,T;\mathcal{H}_0^{1}(\Om)) $ since the functions of the form \eqref{a31} are dense in this space.  Hence,  in particular, 
	\begin{equation}\label{a34}
		\left\langle \frac{d}{dt}z, v \right\rangle_{\left\langle \mcH^{-1}(\Om), \mcH_0^1(\Om)\right\rangle } +B[z, v;t] =(\chi_{\omega_0}g, v),  
	\end{equation}
	for each $v\in \mathcal{H}_0^{1}(\Om)$ and a.e.\! $0 \le t\le T$. From Theorem 3 (page 303) in \cite{Evans} we see that furthermore $z \in C([0,T];L^{2}(\Omega))$. 
	
	In order to prove  $ z(0)=z_{0}$,  we first note from \eqref{a33} that  for 
	\begin{equation}\label{a35}
		\int_{0}^{T} \left[-\left\langle \frac{d}{dt}v, z \right\rangle_{\left\langle \mcH^{-1}(\Om), \mcH_0^1(\Om)\right\rangle } +B[z, v;t]\right] dt-(z(0), v(0)) =\int_{0}^{T} (\chi_{\omega_0}g, v) dt,
	\end{equation}
	for each  $ v\in C^{1}([0,T];\mathcal{H}_0^{1}(\Om)) $  with  $ v(T)=0 $. 
	
	Similarly,  from \eqref{a32} we deduce
	\begin{equation}\label{a36}
		\int_{0}^{T} \left[-\left\langle \frac{d}{dt}v, z_{m} \right\rangle_{\left\langle \mcH^{-1}(\Om), \mcH_0^1(\Om)\right\rangle } +B[z_{m}, v;t] \right]dt-(z_{m}(0), v(0)) =\int_{0}^{T} (\chi_{\omega_0}g, v) dt. 
	\end{equation}
	
	We set $m=m_l$ and once again employ \eqref{a30} to find
	\begin{equation}\label{a37}
		\int_{0}^{T} \left[-\left\langle \frac{d}{dt}v, z \right\rangle_{\left\langle \mcH^{-1}(\Om), \mcH_0^1(\Om)\right\rangle } +B[z, v;t] \right]dt -(z_{0}, v(0))=\int_{0}^{T} (\chi_{\omega_0}g, v) dt, 
	\end{equation}
	since  $ z_{m_l}(0) \to z_{0} $  in  $ L^{2}(\Omega) $  by \eqref{a5} and \eqref{a6}.  As  $ v(0) $  is arbitrary,  comparing \eqref{a35} and \eqref{a37},  we conclude  $ z(0)=z_{0} $. 
	$\hfill\blacksquare$

	\vspace{3mm}
	
	\noindent{\bf Acknowledgement}
	
	\vspace{2mm}
	
	This work is supported by the National Natural Science Foundation of China, the Natural Sciences and Engineering Research Council of Canada (RGPIN-2018-05687), and a centennial fund of the University of Alberta.

	
	\bibliographystyle{abbrv}

	\bibliography{ref20230701}
	%
	%
	%
\end{document}